\newtheorem{theorem}{Theorem}[section]
\newtheorem{lemma}[theorem]{Lemma}
\newtheorem{conjecture}[theorem]{Conjecture}
\newtheorem{proposition}[theorem]{Proposition}
\newtheorem{definition}[theorem]{Definition}
\newtheorem{algorithm}[theorem]{Algorithm}
\newcommand{\K}{\mathcal{K}}
\newcommand{\R}{\mathbb{R}}
\newcommand{\C}{\mathbb{C}}
\newcommand{\N}{\mathbb{N}}
\newcommand{\Z}{\mathbb{Z}}
\newcommand{\oo}{\Omega}
\newcommand{\cA}{\mathcal{A}}
\newcommand{\cB}{\mathcal{B}}
\newcommand{\cI}{\mathcal{I}}
\newcommand{\cK}{\mathcal{K}}
\newcommand{\cL}{\mathcal{L}}
\newcommand{\cR}{\mathcal{R}}
\newcommand{\cS}{\mathcal{S}}
\newcommand{\cX}{\mathcal{X}}
\newcommand{\cY}{\mathcal{Y}}
\newcommand{\pp}{\tfrac{\pi}{2}}
\title{A proof of Jones' conjecture}
\author{
	Jonathan Jaquette \thanks{Partially supported by NSF DMS 0915019,	NSF DMS 1248071} \thanks{
		Department of Mathematics, Hill Center-Busch Campus, Rutgers, The State University of New Jersey, Piscataway, NJ, USA, 08854-8019.
		{\tt jaquette@math.rutgers.edu}
	}
}
\begin{document}

\maketitle

\begin{abstract}
	In this paper, we prove that Wright's equation $y'(t) = - \alpha y(t-1) \{1 + y(t)\}$ has a unique slowly oscillating periodic solution for parameter values $\alpha \in (\tfrac{\pi}{2},  1.9]$, up to time translation. 
	This result proves Jones' Conjecture formulated in 1962, that there is a unique slowly oscillating periodic orbit for all $ \alpha > \tfrac{\pi}{2}$. 
	Furthermore, there are no isolas of periodic solutions to Wright's equation; all periodic orbits arise from Hopf bifurcations. 
\end{abstract}

\begin{center}
	{\bf \small Key words.} 
	{ \small Wright's Equation $\cdot$ Jones' Conjecture $\cdot$ Delay Differential
		Equations \\ Computer-Assisted Proofs $\cdot$ Branch and Bound $\cdot$ Krawczyk method} 
\end{center}
\tableofcontents

 \newpage
%%%%%%%%%%%%%%%%%%%%%%%%%%%%%%%%%%%%%%%%%
%%%%%%%%%%%    Preliminaries  %%%%%%%%%%%
%%%%%%%%%%%%%%%%%%%%%%%%%%%%%%%%%%%%%%%%%
\section{Introduction}

An often studied class of delay differential equations are negative feedback systems of the form: 
\begin{equation}
x'(t) = - \alpha f(x(t-1)) 
\label{eq:MNF}
\end{equation}
where  $ xf(x) > 0 $ for $x \neq 0$ and $ f'(0) >0$.   
One particularly well studied example of \eqref{eq:MNF} is when $f(x) =e^x -1$, better known as Wright's equation, which after making the change of variables $ y =e^x -1 $ can be written in the following form:
\begin{equation}
y'(t) = - \alpha \,y(t-1) \left[  1+ y(t)  \right].
\label{eq:Wright}
\end{equation}

In \cite{jones1962existence}, Jones proved that for $\alpha > \pp$ there exists at least one  \emph{slowly oscillating periodic solution (SOPS)}.
That is, a periodic solution $y:\R \to \R$ which is positive for at least one unit of time (the delay time in Wright's equation), negative for at least one unit of time, and then repeats. 
In this paper we prove there is a unique SOPS to \eqref{eq:Wright} for $ \alpha \in ( \pp,1.9]$, thus completing a proof of Jones' conjecture: 
\begin{theorem}[Jones' conjecture]
	\label{prop:Jones}
	For every $ \alpha > \pp$ there exists a unique slowly oscillating periodic solution to \eqref{eq:Wright}. 
\end{theorem}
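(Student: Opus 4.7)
The proof splits naturally into two overlapping parameter ranges. For $\alpha$ bounded away from $\pi/2$, uniqueness of the SOPS is already available in the literature, via Xie's analytical argument for large $\alpha$ together with a computer-assisted extension down to some explicit threshold $\alpha_0 \le 1.9$. It therefore suffices to prove uniqueness on the remaining interval $(\pi/2, 1.9]$, which contains the Hopf bifurcation point $\alpha = \pi/2$ and is precisely where previous techniques break down. Jones' existence result then upgrades uniqueness to Theorem~\ref{prop:Jones}.

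On this interval I would run a computer-assisted proof. A SOPS of period $\tau$ is first recast as a zero of a nonlinear operator $F(\alpha, \tau, y) = 0$ acting on a sequence space of Fourier coefficients of $y$, with a standard phase condition appended to remove translation invariance. Galerkin truncation reduces this to a finite-dimensional map whose tail is controlled analytically in a Banach-algebra norm on Fourier coefficients. Combined with classical a priori bounds on the period, amplitude, and smoothness of any SOPS of \eqref{eq:Wright}, the search for zeros is confined to a compact region $\mathcal{D} \subset \mathbb{R} \times \mathbb{R} \times \mathbb{R}^N$. I would then cover $\mathcal{D}$ by a grid of boxes and apply the Krawczyk operator in a branch-and-bound loop: each box is either (i) discarded, when Krawczyk certifies $F$ has no zero inside it, or (ii) refined until Krawczyk isolates a unique zero. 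Interval arithmetic makes each of these certifications rigorous, and because Jones guarantees a SOPS exists, exactly one isolated zero survives per value of $\alpha$.

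The main obstacle is the Hopf bifurcation at $\alpha = \pi/2$, where the SOPS collapses to the zero solution and the Jacobian of $F$ becomes singular, so no interval Newton-type method can contract in any neighborhood of $(\pi/2, 2\pi, 0)$. The direct sweep therefore only reaches $[\pi/2 + \epsilon, 1.9]$ for some positive $\epsilon$ that shrinks as the degeneracy worsens. To bridge the residual interval $(\pi/2, \pi/2+\epsilon]$ I would desingularize by factoring out the amplitude: write $y = r\,\hat y$ with $\|\hat y\|=1$ and study $\tilde F(\alpha, \tau, r, \hat y) = 0$, whose zero at $(\pi/2, 2\pi, 0, \hat y_0)$ is nondegenerate as a consequence of the nonzero first Lyapunov coefficient of the Hopf bifurcation for \eqref{eq:Wright}. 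Running the Krawczyk branch-and-bound on $\tilde F$ then certifies that the Hopf branch is locally a unique smooth curve up to $\alpha = \pi/2 + \epsilon$, and gluing this with the global sweep on $\mathcal{D}$ gives uniqueness throughout $(\pi/2, 1.9]$, completing the proof.
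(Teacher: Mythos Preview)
Your overall strategy coincides with the paper's: reduce to $(\pi/2,1.9]$ via Xie and \cite{jlm2016Floquet}, recast SOPS as zeros of a Fourier-space functional with a phase condition, and run a Krawczyk-based branch-and-bound with rigorous tail control on the Galerkin remainder. The one genuine divergence is your treatment of the Hopf point. You propose a blow-up $y=r\hat y$ and a second Krawczyk sweep on the desingularized map $\tilde F$, relying on the nonvanishing first Lyapunov coefficient; the paper instead imports a ready-made local result (Lemma~\ref{prop:BifNbd}, proved in \cite{BergJaquette}) giving an explicit neighborhood $\alpha\in(\pi/2,\pi/2+0.00553]$, $\|c\|_{\ell^1}\le 0.18$, $|\omega-\pi/2|\le 0.0924$ in which any SOPS must lie on the principal branch, so cubes falling inside that region are simply flagged rather than contracted. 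Your route is self-contained and should work, but the paper's is cheaper because the delicate local analysis near the bifurcation was already carried out elsewhere.

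One point you pass over too quickly: confining the search to a compact $\mathcal{D}$ is not immediate from ``classical a priori bounds.'' The paper spends all of Section~\ref{sec:SolutionSpace} on this, importing rigorous pointwise bounding functions for SOPS and their first three derivatives from \cite{jlm2016Floquet}, converting them to Fourier bounds (Algorithm~\ref{alg:FourierProjection}), and then imposing the phase condition (Algorithm~\ref{alg:TimeTranslate}) to obtain the initial cube cover. Without tight starting boxes the branch-and-bound does not terminate in practice, so this step is load-bearing and should be acknowledged as such.
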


\begin{wrapfigure}{l}{0.5\textwidth}
	\centering
	\includegraphics[width =.5\textwidth]{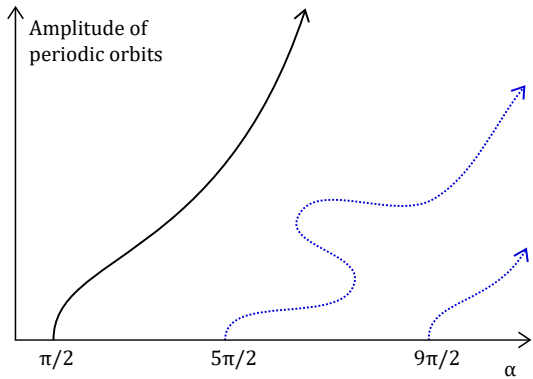}
		\caption{	\footnotesize 
			A bifurcation diagram for periodic solutions to Wright's equation. 
			There are no folds in the principal branch of slowly oscillating periodic solutions (solid curve).  While there may be  folds in  the branches of rapidly oscillating periodic solutions (dotted curves), it is conjectured that this does not occur. 
			There are no isolas of periodic solutions (not displayed).
		}
	
	\label{fig:BifurcationDiagram}
\end{wrapfigure}

This work contributes a capstone to many decades of mathematical work studying Wright's equation. 
To briefly review, a \emph{principal branch} of slowly oscillating periodic orbits is born at $ \alpha = \pp$ and continues on for all $ \alpha > \pp$ \cite{nussbaum1975global}. 
Moreover, Wright's equation has  supercritical Hopf bifurcations at $ \alpha = \pp + 2 n\pi$ for  integers $ n \geq 0$, with slowly oscillating periodic orbits arising when $n=0$, and rapidly oscillating periodic orbits arising when $n\geq1$ (see Figure \ref{fig:BifurcationDiagram}) \cite{chow1977integral}. 
Together with the parameter $\alpha$, the collection of periodic orbits forms a 2-dimensional manifold \cite{regala1989periodic}. 

A two-part geometric  version of Jones' conjecture was proposed in  \cite{lessard2010recent}: (i) the principal  branch of SOPS does not fold back on itself, and (ii)   there are no other connected components (\emph{isolas}) of SOPS. 
By   \cite{BergJaquette,lessard2010recent,jlm2016Floquet,xie1991thesis}   the principal branch does not have any folds $ \alpha > \pp$.

In \cite{jlm2016Floquet,xie1991thesis} it is shown that there is a unique SOPS for $ \alpha \geq 1.9$. 
These proofs use that fact that if every SOPS is asymptotically stable for some $ \alpha > \pp$, then there is a unique SOPS \cite{xie1993uniqueness}.   
Using  estimates describing SOPS  for when $\alpha$ is large \cite{nussbaum1982asymptotic}, Xie showed that there is a unique SOPS for all $ \alpha \geq 5.67$ \cite{xie1991thesis}. 
By using computer-assisted proofs to characterize SOPS to Wright's equation \cite{jlm2016Floquet},  this method was extended to show there is a unique SOPS for  $ \alpha \in [1.9,6.0]$.   

However, for $ \alpha$ close to the bifurcation value $ \pp$ the dynamics becomes center-like, and  proving uniqueness through these stability arguments becomes infeasible. 
To overcome this obstacle, we equate the problem of finding periodic orbits to \eqref{eq:Wright} with a zero--finding problem in a space of Fourier coefficients. 
We  then employ rigorous numerics to derive a computer-assisted proof that there is a unique SOPS to Wright's equation for $\alpha \in ( \pp, 1.9]$, thus proving the Jones conjecture.

Furthermore, 
Theorem \ref{prop:Jones} allows us to deduce that there are no isolas of rapidly oscillating periodic solutions. 
Since the nonlinearity in \eqref{eq:MNF} depends only on $x(t-1)$, in fact any periodic orbit is either a SOPS or rescaling thereof. 
This rescaling between slowly and rapidly oscillating periodic solutions is given in terms of a solution's lap number \cite{mallet1988morse} and its period, as detailed in the following theorem:

\begin{theorem}
	\label{prop:Rescaling}	 
	Let $ x_0$ be a periodic solution to \eqref{eq:MNF} at parameter $\alpha_0$ with period $L_0$ and lap number $ N $. 
	Then there exists a SOPS $x_1(t) = x_0(r t) $ to \eqref{eq:MNF} at parameter $\alpha_1= r\alpha_0$ where $r := 1- \tfrac{N-1}{2} L_0  $. 
\end{theorem}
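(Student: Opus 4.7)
My plan is to prove this by direct verification. Define $x_1(t) := x_0(rt)$ with $r := 1 - \tfrac{N-1}{2}L_0$, show that $x_1$ satisfies \eqref{eq:MNF} at the rescaled parameter $\alpha_1 := r\alpha_0$, and then confirm $x_1$ is slowly oscillating.

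A direct differentiation combined with the fact that $x_0$ solves \eqref{eq:MNF} at $\alpha_0$ gives $x_1'(t) = r\,x_0'(rt) = -r\alpha_0\, f(x_0(rt-1))$, while $-\alpha_1 f(x_1(t-1)) = -r\alpha_0\, f(x_0(rt-r))$. So the DDE for $x_1$ reduces to the identity $x_0(rt - 1) = x_0(rt - r)$. Because $1 - r = \tfrac{N-1}{2}L_0$ and $L_0$ is the minimal period of $x_0$, this identity holds provided $\tfrac{N-1}{2}$ is a non-negative integer. Here I would invoke the classical fact, going back to Mallet-Paret's discrete Lyapunov functional on negative-feedback DDEs \cite{mallet1988morse}, that the lap number of any periodic solution of \eqref{eq:MNF} is odd, so $\tfrac{N-1}{2} \in \mathbb{Z}_{\geq 0}$.

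The remaining task is to confirm that $x_1$ is a SOPS. Its zeros occur at $\{s/r : x_0(s)=0\}$, so the consecutive zero-gaps of $x_1$ are those of $x_0$ divided by $r$. Slow oscillation of $x_1$ therefore amounts to two statements: (i) $r>0$, so that $x_1$ is a forward-time solution, and (ii) every consecutive zero-gap $d_i$ of $x_0$ satisfies $d_i > r$. Both are structural consequences of $x_0$ having lap number $N$: the period must satisfy $(N-1)L_0 < 2$, yielding $r>0$; and the constraint that $x_0$ has exactly $N$ sign-changes in each interval of length $1$ forces $d_i + \tfrac{N-1}{2}L_0 > 1$, which is exactly $d_i > r$.

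I expect the main obstacle to be this last combinatorial estimate on the zero structure of $x_0$: carefully using the lap-number $N$ characterization to extract both the period bound $(N-1)L_0 < 2$ and the sharp zero-spacing inequality $d_i + \tfrac{N-1}{2}L_0 > 1$. These bounds are folklore for negative-feedback DDEs and follow from the discrete Lyapunov machinery of \cite{mallet1988morse}; a self-contained derivation amounts to counting sign-changes of $x_0$ over intervals of length $1$ and translating the resulting combinatorics into the claimed inequality. Once that is in hand, $x_1 = x_0(r\cdot)$ is a SOPS to \eqref{eq:MNF} at parameter $\alpha_1 = r\alpha_0$, proving the theorem.
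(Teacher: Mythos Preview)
Your proposal is correct and follows essentially the same route as the paper: both verify by direct substitution that $x_1(t)=x_0(rt)$ solves \eqref{eq:MNF} at $\alpha_1=r\alpha_0$ using $L_0$-periodicity and the oddness of the lap number, and both pull the needed structural bounds on $L_0$ from \cite{mallet1988morse}. The only cosmetic difference is in the SOPS verification: rather than your zero-gap combinatorics $d_i+\tfrac{N-1}{2}L_0>1$, the paper simply notes that $L_1=L_0/r>2$ (equivalent to the lower bound $L_0>2/N$ from \cite{mallet1988morse}) and invokes the period--lap-number correspondence to conclude the lap number of $x_1$ is $1$, which is a slightly quicker way to the same conclusion.
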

Thus, every periodic orbit is on a branch originating from one of the Hopf bifurcations at $ \alpha = \pp + 2 n \pi$. 
That is to say, there are no isolas of rapidly oscillating periodic  solutions. 
However, this is not sufficient to show there are no folds in the branches of rapidly oscillating periodic solutions. 
The proofs for Theorem \ref{prop:Jones} and Theorem \ref{prop:Rescaling} are presented at the end of Section \ref{sec:GlobalAlgorithm}, and we discuss future directions in Section \ref{sec:FutureWork}.

%%%%%%%%%%%%%%%%%%%%%%%%%%%%%%%%%%%%%%%%

%%%%%%%%%%%%%%%%%%%%%%%%%%%%%%%%%%%%%%%%%%%%%

\section{Outline of Proof}

In this paper we show that there is a unique
 slowly oscillating periodic orbit to \eqref{eq:Wright}  for all $ \alpha \in ( \pp, 1.9]$. 
Like in \cite{BergJaquette,lessard2010recent}, we recast the problem of studying the periodic orbits of \eqref{eq:Wright} as the problem of finding the zeros of a functional $F$ defined in a space of Fourier coefficients (see Section \ref{sec:FunctionDomain}).   
Since periodic solutions to \eqref{eq:Wright} must have a high degree of smoothness, 
in particular real analyticity  \cite{wright1955non,nussbaum-analytic},  
 their Fourier coefficients will decay very rapidly. 
That is to say, the functional we are interested in can  be well approximated by a Galerkin projection onto a finite number of Fourier modes.

In finite dimensions, there are efficacious techniques for rigorously locating and enumerating the solutions to a system of nonlinear equations by way of interval arithmetic \cite{neumaier1990interval,hansen2003global,moore2009introduction}. 
We apply these techniques in infinite dimensions, specifically the \emph{branch and bound} method, also referred to as a \emph{branch and prune} method.  
That is, we first construct  a bounded set $X$ of Fourier coefficients  which contains all the zeros of $F$ (see Section  \ref{sec:SolutionSpace}). 
Then we partition $X$ into a finite number of pieces $\{X_n \}$ which we   refer to as  \emph{cubes} (see Definition \ref{def:cube}).  
For each cube $X_n$ we are interested to know whether: 
\begin{enumerate}[$($a$)$]
	\item there exists  a unique point $\hat{x} \in X_n$ for which $ F(\hat{x})=0$, or
	\item there does not exist any points $\hat{x} \in X_n$ for which $F(\hat{x})=0$. 
\end{enumerate}
If we can show that $(a)$ holds for one cube, and $(b)$ holds for all the other cubes, then we will have shown that $ F=0$ has a unique solution.

This approach requires some additional preparation.  Since periodic orbits to \eqref{eq:Wright} form a 2-manifold in phase space \cite{regala1989periodic}, the  functional $F$ we construct in Section \ref{sec:FunctionDomain} will not have isolated zeros. 
The numerical techniques we employ are suited to finding isolated zeros, so  it is necessary to reduce the dimension of the kernel by two.  
Along the principal branch $\alpha$ can be taken as one of the coordinate dimensions.  We reduce this dimension by treating $\alpha$ as a parameter and performing our estimates uniformly in $\alpha$. 
The other dimension can be attributed to time translation; if $y(t)$ is a periodic orbit, then so is $ y(t+\tau)$ for any $\tau \in \R$.  
We reduce this dimension by imposing a phase condition; we may assume without loss of generality that the first Fourier coefficient is a positive real number (see Proposition \ref{prop:TimeTranslation}).

The central technique we use to determine whether $(a)$ or $(b)$ holds for a given cube is the Krawczyk method \cite{neumaier1990interval,moore2009introduction,hansen2003global,moore1977test}. 
For a function $f\in C^1(\R^n,\R^n)$ the Krawczyk operator takes as input a rectangular set $X\subseteq  \R^n$  and produces as output a rectangular set $K(X) \subseteq \R^n$. 
This set $K(X)$ has the properties that, 
$(i)$ if $K(X) \subseteq X$, then there exists a unique point $ \hat{x} \in X$ for which $f(\hat{x})=0$, and  
$(ii)$ if $\hat{x} \in X$ and $f(\hat{x}) =0$, then $ \hat{x} \in K(X)$. 
Clearly $(i) $ implies $(a)$, and if $ X \cap K(X) = \emptyset$ then $(b)$ follows. 
Additionally, even if we can prove neither $(a)$ nor $(b)$ our situation could still improve; we can replace $X \mapsto X \cap K(X)$ without losing any solutions.

Adjustments are needed to generalize the Krawczyk operator to infinite dimensional systems.  
In \cite{galias2007infinite} a Krawczyk operator is defined in Hilbert space to study fixed points and period-2 orbits in an infinite dimensional map.  
In Section \ref{sec:KrawczykBanach} we present a generalization of the Krawczyk operator to Banach spaces.

 \begin{wrapfigure}{r}{0.6\textwidth}
 	\centering
 	\includegraphics[width = .6\textwidth]{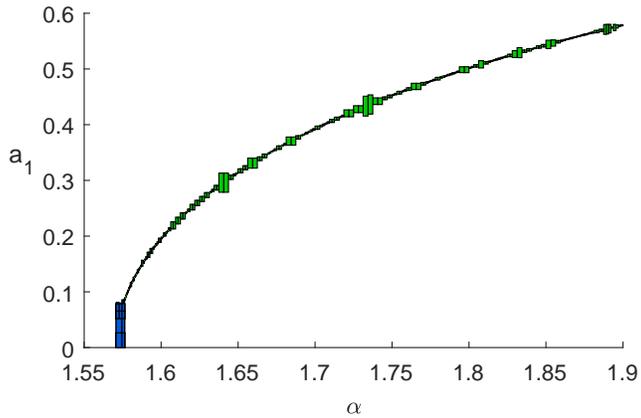}
 	\caption{ \footnotesize
 		The main result of this paper is a collection of ``cubes'' in Fourier space which cover the Fourier coefficients of SOPS to \eqref{eq:Wright}. 
 		The first Fourier coefficient of this cover is plotted here with respect to $\alpha$. 
 		Inside each green cube there exists a unique SOPS corresponding to each $\alpha$, essentially by Theorem \ref{prop:Krawczyk}. 
 		Inside each blue cube the only SOPS that can exist are on the principal branch, by \cite{BergJaquette}.
 	}
 	\label{fig:Verified}
 \end{wrapfigure}

To determine whether $(a)$ or $(b)$ holds the Krawczyk operator by itself is not always sufficient, and  we combine several additional tests to create a single \emph{pruning operator} (see Section \ref{sec:Prune}).  
One problem is that $y \equiv 0$ is always a trivial periodic solution to \eqref{eq:Wright}. 
To avoid this pitfall, we use Lemma \ref{prop:zeroneighborhood2} which rules out small periodic solutions \cite{BergJaquette}. 
A further difficulty is that at the Hopf bifurcation, the principal branch of periodic solutions is pinched to a point as their amplitudes approach zero.
To handle this case, we use Lemma \ref{prop:BifNbd} which explicitly gives a neighborhood about the Hopf bifurcation within which the only solutions that could exist are  on the principal branch \cite{BergJaquette}. 
Lastly, and most simply, if we can directly show   that $\|F\|$ is bounded away from zero on a cube $X_n$, then  $(b)$ holds.

Algorithm \ref{alg:BranchAndPrune} follows the standard format of a global branch and bound method. 
In short, for a collection of cubes we successively prune each of its cubes. 
If $(a)$ holds for a given cube, then it is set aside and added to a list of solutions. 
If $(b)$ holds for a given cube, then that cube is discarded. 
If the pruning operator significantly reduces the size of a cube, then the pruning operator is applied again. 
If none of these are the case, then the cube is split in half, and both pieces are added back to the collection of cubes to inspect. 
This process repeats until all of the cubes have been removed or reduced to a sufficiently small size.

The output of Algorithm \ref{alg:BranchAndPrune} is three collections of cubes:  $\cA$, $\cB$, and $\cR$ (see Figure \ref{fig:Verified}).
In Theorem \ref{prop:BnB} we show that these sets have the properties that,  $(i)$ each cube in $\cA$ has a unique solution with respect to $\alpha$, $(ii)$ the cubes in $\cB$ are near the Hopf bifurcation, with  any solutions contained therein  residing  on the principal branch, and $(iii)$ all solutions to $F=0$ are contained in $\bigcup \cA \cup \cB \cup \cR$.

Ideally $\cR = \emptyset$, and this will often be the case if the zeros of $F$ are simple and the algorithm is allowed to run a sufficiently long time. 
However we are trying to verify not just simple, isolated solutions, but a 1-parameter family of solutions.  
As such, sometimes when a cube is split in two this division will bisect the curve of solutions (see Figure \ref{fig:BranchANDBound}).
When this occurs the algorithm will be forced  to subdivide many cubes near where the solution curve was bisected, resulting in the variably sized cubes noticeable in Figure \ref{fig:Verified}. 
To address this we recombine the cubes in $\cR$ which have the same $\alpha$ values, then subsequently use the Krawczyk operator to show that $(a)$ holds on the recombined cubes (see Algorithm \ref{alg:Recombine}). 
In this fashion, we prove Theorem \ref{prop:Jones}.

\subsection{Krawczyk Operator} 
\label{sec:KrawczykBanach}

In numerical analysis  there are many variations on the theme of Newton's method:
\(
	x_{n+1} \mapsto x_n - Df(x_n)^{-1} f(x_n). 
\)
As inverting a matrix is computationally expensive, one  alternative method is to replace $ DF(x_n)^{-1}$ with a fixed matrix $A^\dagger \approx Df(x_0)^{-1}$. 
If $f(x_0) \approx 0$, then the Newton-Kantorovich theorem gives conditions for when the map $ T(x) = x - A^\dagger f(x)$  defines a contraction map in a neighborhood about $x_0$. 
The Krawczyk operator may be thought of as a way of bounding the image of $T$, itself being defined on rectangular sets $ X \subseteq \R^n$ and having the property that $T(X) \subseteq K(X,x_0)$.  
Rectangular, in the sense that $X$ can be given as the product of intervals in the  coordinate directions of $\R ^n$.  
Here we  generalize the Krawczyk operator to non-rectangular subsets of Banach spaces.  
\begin{definition}
	\label{def:Krawczyk}
	Let  $Y,Z$ denote Banach spaces and let $ A^\dagger:Z \to Y$ be an injective, bounded linear operator. Fix a convex, closed and bounded  set $X \subseteq Y$, a neighborhood  $ U \supseteq X$,  and a Frechet differentiable function $f:U \to Z $.
	Let  
	\[
	( I - A^\dagger Df(X))(X-\bar{x}) = \overline{conv} \left( \bigcup_{x_1,x_2 \in X} 	( I - A^\dagger Df(x_1)) (x_2-\bar{x})  \right),
	\]
	where $\overline{conv}$ denotes the closure of the convex hull.
	For a point $ \bar{x} \in X$ we define the Krawczyk operator $K(X,\bar{x})$ as:  
	\begin{equation}
	\label{eq:KrawczykDef}
	K(X,\bar{x}) := \bar{x} - A^\dagger f( \bar{x}) + ( I - A^\dagger Df(X))(X-\bar{x}) \subseteq Y.
	\end{equation}

\end{definition}
\noindent
Typically $ \bar{x}$ is taken to be the center of $ X$,  
and $A^\dagger$ is taken to be an approximate inverse of $ DF(\bar{x})$.
If $ K(X,\bar{x}) \subseteq X$ for a rectangular set $X \subseteq \R ^n$,   then there exists a unique $\hat{x}$ such that $ f(\hat{x})=0$. 
	In Theorem  \ref{prop:Krawczyk} we prove an analogous result.   
The existence of a fixed point is achieved by the Schauder fixed point theorem. 
However to prove uniqueness, dropping  the rectangular condition causes problems even  in finite dimensions; in Theorem 
 \ref{prop:Krawczyk} $(iv)$ we prescribe a hypothesis sufficient for proving uniquessness in our level of generality.

\begin{theorem}
			\label{prop:Krawczyk}
	Suppose $K$ is a Krawczyk operator as given in Definition \ref{def:Krawczyk} and $ T:= x - A^\dagger f(x)$. 
	\begin{enumerate}[(i)]
		\item  If $x \in X$, then $ T(x) \in K(X,\bar{x})$. 
		\item If $ \hat{x} \in X$ and $ f(\hat{x} )=0$, then $ \hat{x} \in K(X,\bar{x})$. 
		\item If $K(X,\bar{x}) \subseteq X$ and  $X$ is compact, then there exists a point $ \hat{x} \in X$ such that $ f(\hat{x})=0$. 
		\item If $K(X,\bar{x}) \subseteq X$ and there exists $0 \leq \lambda < 1$ such that $( I - A^\dagger Df(X))(X-\bar{x}) \subseteq \lambda \cdot ( X - \bar{x})$, then there exists a unique point $ \hat{x} \in X$ such that $ f(\hat{x})=0$. 
	\end{enumerate}
\end{theorem}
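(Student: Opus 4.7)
The plan is to derive all four conclusions from the single integral identity
\[
T(x) \;=\; \bar{x} - A^\dagger f(\bar{x}) + \int_0^1 \bigl[I - A^\dagger Df(\bar{x} + t(x - \bar{x}))\bigr](x - \bar{x})\,dt,
\]
which is the Banach-valued fundamental theorem of calculus applied to $f(x) - f(\bar{x})$; convexity of $X$ keeps the segment $[\bar{x},x]$ inside $U$, so $Df$ is defined throughout. For each $t$ the integrand has the form $(I - A^\dagger Df(x_1))(x_2 - \bar{x})$ with $x_1, x_2 \in X$, and the Bochner integral is a limit of Riemann sums, i.e.\ of convex combinations of such vectors; hence it lies in the closed convex hull appearing in Definition \ref{def:Krawczyk}. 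This proves (i) directly, and (ii) follows by specializing to $x = \hat{x}$: since $f(\hat{x})=0$ forces $T(\hat{x}) = \hat{x}$, part (i) places $\hat{x}$ in $K(X,\bar{x})$.

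For (iii), part (i) yields $T(X) \subseteq K(X,\bar{x}) \subseteq X$. The map $T$ is continuous because $f$ is Frechet differentiable and $A^\dagger$ is bounded, and $X$ is compact and convex by hypothesis. Schauder's fixed point theorem therefore produces $\hat{x} \in X$ with $T(\hat{x}) = \hat{x}$, equivalently $A^\dagger f(\hat{x}) = 0$; injectivity of $A^\dagger$ upgrades this to $f(\hat{x}) = 0$.

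The main obstacle is (iv). The hypothesis controls $I - A^\dagger Df$ on $X - \bar{x}$, but two candidate zeros $\hat{x}_1, \hat{x}_2$ differ by a vector in $X - X$, which is typically larger than $X - \bar{x}$ when $X - \bar{x}$ fails to be balanced (as it would be for a rectangular set centered at $\bar{x}$). The strategy is to transfer the hypothesis from $X - \bar{x}$ to $X - X$ and then iterate. Any $u \in X - X$ decomposes as $(x_1 - \bar{x}) - (x_2 - \bar{x})$, so for every $x \in X$ the hypothesis gives
\[
(I - A^\dagger Df(x))\,u \;\in\; \lambda(X - \bar{x}) - \lambda(X - \bar{x}) \;=\; \lambda(X - X).
\]
Applying the same mean-value identity between $\hat{x}_1$ and $\hat{x}_2$ yields
\[
v \;:=\; \hat{x}_1 - \hat{x}_2 \;=\; \int_0^1 \bigl[I - A^\dagger Df(\hat{x}_2 + sv)\bigr]\,v\,ds,
\]
and convexity of $X - X$ is preserved under integration, so $v \in \lambda(X - X)$. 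Writing $v = \lambda u_1$ with $u_1 \in X - X$ and substituting back into the identity produces $v \in \lambda^2(X - X)$, and inductively $v \in \lambda^n(X - X)$ for every $n$. Since $X - X$ is bounded and $\lambda < 1$, this forces $v = 0$, establishing uniqueness.
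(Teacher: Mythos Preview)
Your treatment of (i)--(iii) matches the paper's proof essentially line for line: the same mean-value integral, the same Riemann-sum/convex-hull observation, and the same appeal to Schauder.

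For (iv) there is a genuine gap: you establish only uniqueness, not existence. Part (iv) does \emph{not} assume $X$ is compact, so you cannot inherit existence from (iii). The paper handles both at once by iterating the Krawczyk construction itself: setting $X_0=X$, $x_0=\bar{x}$, $X_{n+1}=T(X_n)$, $x_{n+1}=T(x_n)$, one shows inductively that $X_n\subseteq x_n+\lambda^n(X_0-x_0)$. Since $\{x_m\}_{m\ge n}\subseteq X_n$ and the sets $\lambda^n(X_0-x_0)$ shrink, the orbit $\{x_n\}$ is Cauchy in the complete set $X$, its limit $\hat{x}$ is a fixed point of $T$, and $\bigcap_n X_n=\{\hat{x}\}$ forces uniqueness. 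Your $X-X$ trick is a perfectly valid alternative route to uniqueness (modulo replacing $\lambda^n(X-X)$ by its closure, which costs nothing since $X-X$ is bounded), but as written the existence half is missing.

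The fix is short and uses your own machinery: from $(I-A^\dagger Df(x))(X-X)\subseteq\lambda\,\overline{X-X}$ and the mean-value identity you already wrote down, one gets $T(x)-T(y)\in\lambda\,\overline{X-X}$ for all $x,y\in X$, hence $T^n(x)-T^n(y)\in\lambda^n\,\overline{X-X}$. Applying this with $x=T(\bar{x})$, $y=\bar{x}$ shows $\|T^{n+1}(\bar{x})-T^n(\bar{x})\|\le\lambda^n\,\mathrm{diam}(X)$, so $\{T^n(\bar{x})\}$ is Cauchy in the closed set $X$ and converges to a fixed point. Adding this paragraph completes your argument; without it, (iv) is not proved.
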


\begin{proof}

$\,$ 
	\begin{enumerate}[(i)]
		\item 
		Fix  a point $x \in X$ and write $ h = x - \bar{x}$. By the mean-value theorem for Frechet differentiable functions \cite{ambrosetti1995primer}, we have: 
		\begin{align*}
		T(x) 
		&= \bar{x} - A^\dagger f(\bar{x}) + \int_{0}^{1} DT( \bar{x} + t h)  \cdot h \, dt  \nonumber \\ 
		&=  \bar{x} - A^\dagger f(\bar{x}) + \lim_{N \to \infty}	\sum_{i=1}^N  \tfrac{1}{N} \left( I - A^\dagger Df(\bar{x} + \tfrac{i}{N}h) \right) \cdot h  \nonumber \\
		&\in   \bar{x} - A^\dagger f(\bar{x}) + \overline{conv} \left(  \left( I - A^\dagger Df(X) \right) \cdot (x - \bar{x}) \right)   \\
		&\subseteq K(X,\bar{x}). \label{eq:TsubK}
		\end{align*}

		\item  
		If there is some $ \hat{x} \in X$ such that $ f(\hat{x} )=0$, then $\hat{x} =  T(\hat{x}) \in K(X,\bar{x})$. 
		
		\item 	Since  $ T(X) \subseteq K(X,\bar{x})$ by \emph{(i)} and $K(X,\bar{x}) \subseteq X$ by assumption, therefore $ T(X) \subseteq X$.  
		As $T$ is continuous and $X$ is convex and  compact, then by the Schauder fixed point theorem there exists some $ \hat{x} \in X$ such that $\hat{x} =T(\hat{x})$. 
		 Since $A$ is injective,  the zeros of $f$ are in bijective correspondence   with the fixed points of $ T$, thereby $ f(\hat{x})=0$.

		\item 	Inductively define: 		$X_0=X$, $x_0 = \bar{x}$, and $X_{n+1} = T(X_n)$, $x_{n+1} = T(x_n) $. 
		Note that as $T(X) \subseteq X$ then $X_{n+1} \subseteq X_n$ for all $n$. 
		We show that $ X_n \subseteq x_n + \lambda^n (X_0 - x_0)$. 
		This is clearly true for $ n=0$. 
		For $ n \geq 1$ then:
		\begin{align*}
		X_{n+1} &\subseteq K(X_n,x_n) \\
		&=  x_n - A^\dagger f(x_n) + (I -A^\dagger Df(X_n)) \cdot (X_n -x_n) \\
		&\subseteq x_{n+1} + (I -A^\dagger D f(X_0)) \cdot \lambda^n (X_0 -x_0) \\
		&\subseteq x_{n+1} + \lambda^{n+1} (X_0 - x_0). 
		\end{align*}
		Since $ \lambda^{n}  \| X_0-x_0 \|$ can be made arbitrarily small and $ \{ x_n\}_{n=N }^\infty\subseteq X_N$, it follows that $ \{x_n\}$ is a Cauchy sequence. 
		As $X$ is complete, then $ \lim x_n = \hat{x} $ and additionally $  \bigcap_{n=0}^\infty X_n = \hat{x}$. 
		Thereby $\hat{x}$ is the unique fixed point  of $T$ in $X_0=X$ and the unique zero of $ f$ in $X$. 
		
	\end{enumerate}
\end{proof}

\subsection{Functions and Domains}
\label{sec:FunctionDomain}
As in \cite{BergJaquette,lessard2010recent}, we convert Wright's equation into a functional equation on the space of Fourier coefficients. 
For a continuous periodic function  $y:\R \to \R$  with frequency $\omega >0$, we may write it as: 
\begin{equation}
y(t)  = \sum_{k \in \Z} c_{k} e^{ i \omega k t }
\label{eq:FourierEquation}
\end{equation}
where $ c_k \in \C$ and $\sum_{k\in \Z} |c_k|^2 < \infty$.  
By \cite{BergJaquette}   it suffices to work with sequences $ \{ c_k \}_{k=1}^{\infty}$  to study periodic solutions to  \eqref{eq:Wright}.  
This is because real-valued functions have Fourier coefficients satisfying $ c_{-k} =c^*_k$, and  periodic solutions to \eqref{eq:Wright} necessarily satisfy  $c_0 =0$.  
Hence  we define the following Banach spaces: 
\begin{align}
\ell^1 :=& \left\{ \{ c_k \}_{k=1}^\infty : c_k \in \C \mbox{ and } \sum_{k=1}^\infty | c_k| < \infty  \right\} & \| c \|_{\ell^1} =& 2 \sum_{k = 1}^\infty | c_k| \\
\Omega^s :=& \left\{ \{ c_k \}_{k=1}^\infty : c_k \in \C \mbox{ and } \sup_{k \in\N} k^s |c_k| < \infty  \right\} & \| c \|_{s} =&  \sup_{k \in\N} k^s | c_k|.
\end{align}
The smoother a function is the faster its Fourier coefficients will decay; if a function is $s$--times continuously differentiable, then its Fourier coefficients will be in $ \oo^s$.  
Since periodic solutions to \eqref{eq:Wright} are real analytic \cite{wright1955non,nussbaum-analytic}, it follows that their Fourier coefficients will be in $ \oo^s$ for all $ s \geq 0$.

If $y$ is a solution to Wright's equation, then by substituting  \eqref{eq:FourierEquation} into \eqref{eq:Wright}  we obtain:
\begin{equation}
\sum_{k \in \Z} i \omega k c_k e^{ i \omega k t} = - \alpha \left(\sum_{k \in \Z}  c_k e^{-i \omega k}  e^{ i \omega k t}\right) \left( 1 + \sum_{k \in \Z}  c_k e^{ i \omega k t}\right).
\end{equation}
By matching the $ e^{ i \omega k t}$ terms, subtracting the RHS, and dividing through by $\alpha$, we obtain the following sequence of equations for $k \in \Z$ below: 
\begin{align}
[F(\alpha,\omega,c)]_k=&\,
\left( i \tfrac{\omega}{\alpha} k +  e^{ - i \omega k} \right) c_k +  \sum_{\substack{k_1,k_2\in\Z\\ k_1 + k_2 = k}} e^{- i \omega k_1} c_{k_1} c_{k_2} 
\label{eq:FourierSequenceEquation} \\
  =&\,
\left( i \tfrac{\omega}{\alpha}  k + e^{ - i \omega k }\right) c_k +   \sum_{j=1}^{k-1} e^{-i \omega j } c_j c_{k-j}  + \sum_{j=1}^{\infty} \left(e^{-i \omega (j+k) }  +e^{i \omega j } \right) c_j^* c_{j+k}.
\label{eq:FourierSequenceEquation_2} 
\end{align}
Dividing through by $\alpha$ ensures that the parameter dependence in $F$ is solely concentrated in the linear part. 
In this manner $y$ is a periodic solution with frequency $ \omega$  to Wright's equation at parameter $ \alpha$ if and only if $ [F(\alpha,\omega, c)]_k=0$ for all $ k \in \Z$ \cite{BergJaquette,jlm2016Floquet}.

To more succinctly express the functional  $F$ we introduce additional notation. 
For a sequence $ c=\{ c_k\}_{k=1}^\infty$ we denote the projection onto the $k$-coefficient by $ [c]_k := c_k$. 
We define unnormalized basis elements $ e_j \in \ell^1,\oo^s$ for $ j\in \N $  by:  
\begin{align*}
	[e_j]_k = 
	\begin{cases}
	1 & \mbox{ if } k = j, \\
	0 & \mbox{ if } k \neq j.
	\end{cases}
\end{align*}
We define the discrete convolution $a*b$ for $ a,b \in \ell^1$ component-wise by: 
\begin{align*}
	\left[ a * b \right]_k &:= \sum_{|k_1| + |k_2| = k} a_{k_1} b_{k_2}  & 
	& = \sum_{j=1}^{k-1} a_j b_{k-j} + \sum_{j=1}^\infty a_j^* b_{k+j} + a_{k+j} b_j^*,
\end{align*}
where $  a_{-k} = a_k^*$ and $ b_{-k}=b_k^*$, and the sum is taken over $ k_1 , k_2 \in \Z$.  
The space $\ell^1$ is a Banach algebra,  which is to say that $\| a * b \|_{\ell^1} \leq \|a \|_{\ell^1}   \| b\|_{\ell^1} $ for all $ a , b \in \ell^1$. 
While $ \oo^s$ is not a Banach algebra \emph{per se}, if $ s \geq 2$ then there exists a constant $B \geq 0$ such that  $ \| a * b \|_s \leq B \| a \|_s   \| b \|_s$  for all $a ,b \in \Omega^s$ (see \cite{lessard2010recent,berg2008chaotic}).  
Lastly, we define a linear operator $\K: \Omega^{s} \to \Omega^{s+1}$ and a continuous family of linear operators  $U_\omega :  \Omega^{s }\to \Omega^{s-1}$ as below: 
\begin{align*}
[\K c ]_k &:=  c_k  /k ,&&&
[ U_\omega c ]_k &:= e^{-i k \omega} c_k .
\end{align*}
The loss of regularity in the range of $U_\omega$ is necessary for its continuity, as $\frac{\partial}{\partial \omega} U_{\omega} = - i \K^{-1} U_{\omega}$. 
We may extend $ U_\omega$ to act on bi-infinite sequences $\{c_k\}_{k \in \Z}$ using the same component-wise definition. Additionally, this extension is compatible with our definition of the discrete convolution, as $[U_\omega c]_k^* = [U_\omega c]_{-k}$ whenever $c_{k}^* = c_{-k}$. 
In Definition \ref{def:Functional} we rewrite \eqref{eq:FourierSequenceEquation} in operator notation and list several propositions, the proofs of which are left to the reader.  

\begin{definition}
	\label{def:Functional}
	Define the function $F:\R^2 \times  \Omega^{s} \to  \Omega^{s-1}$ as: 
	\begin{equation}
	F(\alpha,\omega,c) := ( i \tfrac{\omega}{\alpha} \K^{-1} +  U_{\omega}) c +  (U_{\omega} c) * c.
	\end{equation}
	
\end{definition}
\begin{proposition}[Theorem 2.2 in \cite{BergJaquette}]
	\label{prop:Equivalence}
	Let $\alpha, \omega>0$.
	If $ c \in \ell^1$ solves
	$F(\alpha,\omega,c) =0$,
	then $y(t)$, given by~\eqref{eq:FourierEquation} with $ c_0=0$ and $ c_{-k} = c^*_k$, is a periodic solution of~\eqref{eq:Wright} with period~$2\pi/\omega$.
	Vice versa, if $y(t)$ is a periodic solution of~\eqref{eq:Wright} with period~$2\pi/\omega$, then its Fourier coefficients  satisfy $c_0 = 0 $, $c_{-k} =c_{k}^*$,  $\{c_k\}_{k=1}^\infty \in \ell^1 $ and solve $F(\alpha,\omega,\{c_k\}_{k=1}^\infty) =0$. 
\end{proposition}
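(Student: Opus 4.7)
The plan is to prove both directions by direct substitution of the Fourier series into Wright's equation, with the main technical work being to justify termwise manipulations and to establish the claimed properties of the Fourier coefficients.

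For the forward direction, suppose $c \in \ell^1$ with $F(\alpha,\omega,c) = 0$, and extend by $c_0 = 0$ and $c_{-k} = c_k^*$. The extended sequence is still absolutely summable, so the series $y(t) = \sum_{k \in \Z} c_k e^{i\omega k t}$ converges uniformly and absolutely to a continuous, real-valued, $2\pi/\omega$-periodic function. To get differentiability I would use the equation itself: rearranging $F(\alpha,\omega,c)_k = 0$ as $i\tfrac{\omega}{\alpha} k c_k = -[U_\omega c + (U_\omega c)\ast c]_k$, the right-hand side lies in $\ell^1$ (since $U_\omega$ is an isometry of $\ell^1$ and $\ell^1$ is a Banach algebra under convolution), so $\{k c_k\}_{k\in\Z} \in \ell^1$ and $y \in C^1$ with $y'(t) = \sum i\omega k c_k e^{i\omega k t}$. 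Then I compute $y(t-1) = \sum [U_\omega c]_k e^{i\omega k t}$ and form the product $y(t-1)(1+y(t))$; absolute summability lets me expand the product as a Cauchy series whose $k$-th Fourier coefficient is exactly $[U_\omega c]_k + [(U_\omega c) \ast c]_k$. Matching coefficients of $e^{i\omega k t}$ in $y'(t) + \alpha y(t-1)(1+y(t))$, the $k$-th coefficient is precisely $\alpha \cdot [F(\alpha,\omega,c)]_k = 0$, so $y$ solves Wright's equation.

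For the converse, let $y$ be a periodic solution with period $2\pi/\omega$ and let $\{c_k\}_{k \in \Z}$ be its Fourier coefficients. Reality of $y$ immediately gives $c_{-k} = c_k^*$. By the cited results of Wright and Nussbaum, $y$ is real analytic, so its Fourier coefficients decay exponentially; in particular $\{c_k\}_{k=1}^\infty \in \ell^1$ (indeed in $\Omega^s$ for all $s \geq 0$). To show $c_0 = 0$, I would use the substitution behind Wright's equation: since Wright's equation arises from $x' = -\alpha(e^{x(t-1)} - 1)$ via $y = e^x - 1$, any bona fide periodic solution satisfies $1 + y(t) > 0$, so I may divide to get $\frac{y'(t)}{1+y(t)} = -\alpha y(t-1)$, and integrating over one period kills the left-hand side (its antiderivative $\log(1+y)$ is periodic), giving $\int_0^{2\pi/\omega} y(t-1)\,dt = 0$. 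By translation invariance this is $c_0 \cdot (2\pi/\omega) = 0$, so $c_0 = 0$.

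Finally, with these structural properties in hand, I substitute the Fourier series for $y$ into Wright's equation once more and match coefficients of $e^{i\omega k t}$ on both sides, which (by the same Cauchy product calculation as in the forward direction) yields $[F(\alpha,\omega,c)]_k = 0$ for every $k \geq 1$. The main obstacle I expect is purely bookkeeping: justifying the interchange of summation in the Cauchy product and correctly handling the bi-infinite indexing so that $[(U_\omega c) \ast c]_k$ reproduces both the diagonal sum $\sum_{j=1}^{k-1} e^{-i\omega j} c_j c_{k-j}$ and the tail sum involving $c_j^*$ in \eqref{eq:FourierSequenceEquation_2}; once $c \in \ell^1$ and $c_{-k} = c_k^*$ are in place, this reduces to the definition of convolution given immediately before Definition \ref{def:Functional}.
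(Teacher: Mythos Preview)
The paper does not supply its own proof of this proposition: it is quoted as Theorem~2.2 of \cite{BergJaquette}, and the surrounding propositions are explicitly ``left to the reader.'' Your argument is correct and is essentially the standard direct proof one would expect---substitute the Fourier series into \eqref{eq:Wright}, use the Banach algebra property of $\ell^1$ to justify the Cauchy product, and bootstrap $F=0$ to obtain $\{kc_k\}\in\ell^1$ and hence $y\in C^1$. Your handling of $c_0=0$ via integrating $(\log(1+y))'=-\alpha y(t-1)$ over a period is the right idea, and the positivity $1+y>0$ you invoke is exactly Wright's bound $-1<y(t)<e^\alpha-1$ recalled later in the paper (Section~\ref{sec:FourierProj}). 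One minor remark: you appeal to real analyticity \cite{wright1955non,nussbaum-analytic} to get $\{c_k\}\in\ell^1$, which is correct but stronger than needed; a simple bootstrap of \eqref{eq:Wright} already gives $y\in C^\infty$, and $C^2$ alone yields $|c_k|=O(k^{-2})$, which suffices for $\ell^1$.
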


\begin{proposition}
	\label{prop:Frechet}
	For each $\alpha > 0$ and $ s \geq 2 $ the function  $F:\R^2 \times  \Omega^{s} \to    \Omega^{s-1}$ is Frechet differentiable, with partial derivatives given as: 
	\begin{align}
	%\frac{\partial}{\partial \alpha} 
	%F(\alpha,\omega,c) &=  -i \tfrac{\omega}{\alpha^2} \K^{-1}  c  \label{eq:dFdA} \\
	%%%%
	\frac{\partial}{\partial \omega} 
	F(\alpha,\omega,c) &= i \K^{-1} (\alpha^{-1}  I - U_{\omega}) c - i  (\K^{-1} U_{\omega} c) * c  \label{eq:dFdW} \\
	%%%%
	%%%%
	\frac{\partial }{\partial c} F(\alpha,\omega,c) \cdot h &= ( i \tfrac{\omega}{\alpha} \K^{-1} + U_\omega) h + (U_\omega c) * h + (U_\omega h )*c, \label{eq:dFdC} 
	\end{align}
	where $ h \in \Omega^{s}$. 
\end{proposition}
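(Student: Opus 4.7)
The plan is to decompose $F(\alpha,\omega,c) = i(\omega/\alpha)\K^{-1}c + U_\omega c + (U_\omega c) * c$, verify Frechet differentiability of each of the three pieces separately, and then sum the partial derivatives. We treat $\Omega^s$ as a real Banach space (via complexification), since the convolution formula involves conjugates and $F$ is $\R$-differentiable but not $\C$-holomorphic in $c$.

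First I would handle the linear piece $i(\omega/\alpha)\K^{-1} c$. Its dependence on $(\alpha,\omega)$ is through the $C^\infty$ scalar $i\omega/\alpha$ on $(0,\infty)\times\R$, while $\K^{-1}\colon \Omega^s \to \Omega^{s-1}$ is bounded (in fact an isometry, since $\|\K^{-1}c\|_{s-1} = \sup_k k^{s-1}\cdot k|c_k| = \|c\|_s$), so the partial derivatives read off by inspection. Next I would treat the quadratic piece $(U_\omega c)*c$. For the $c$-derivative, I use the continuous bilinearity of convolution: the Banach-algebra-like bound $\|a*b\|_{s-1} \leq \|a*b\|_s \leq B\|a\|_s\|b\|_s$ (for $s\geq 2$) immediately gives Frechet differentiability with derivative $h \mapsto (U_\omega h)*c + (U_\omega c)*h$ and remainder $O(\|h\|_s^2)$. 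Summing with the linear piece yields \eqref{eq:dFdC}. The $\omega$-derivative of $(U_\omega c)*c$ reduces, via the same convolution bound, to the $\omega$-derivative of $U_\omega c$ pre-convolved with $c$, so everything hinges on establishing Frechet differentiability of $\omega \mapsto U_\omega c$ into $\Omega^{s-1}$.

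This last step is the main obstacle, since the candidate derivative $-i\K^{-1}U_\omega c$ involves the regularity-losing operator $\K^{-1}$. Componentwise, the remainder reads
\[
R_k(\eta) = e^{-ik\omega}\bigl(e^{-ik\eta} - 1 + ik\eta\bigr) c_k,
\]
and the scalar factor satisfies both $|e^{-it} - 1 + it| \leq t^2/2$ (Taylor with integral remainder) and $|e^{-it}-1+it| \leq 2|t|$ (triangle inequality using $|e^{-it}-1|\leq |t|$). The geometric mean of these two bounds gives $|e^{-it}-1+it|\leq |t|^{3/2}$ for all $t$, so $k^{s-1}|R_k(\eta)|/|\eta| \leq k^{s+1/2}|\eta|^{1/2}|c_k|$; combining with the weight bound $|c_k|\leq \|c\|_s/k^s$ yields $k^{s-1}|R_k(\eta)|/|\eta| \leq k^{-1/2}|\eta|^{1/2}\|c\|_s$, uniformly bounded in $k\geq 1$ by $|\eta|^{1/2}\|c\|_s$. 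Hence $\|R(\eta)\|_{s-1}/|\eta| \to 0$, establishing Frechet differentiability with derivative $-i\K^{-1}U_\omega c$. Feeding this into the $\omega$-derivative of the quadratic piece via the convolution bound produces $-i(\K^{-1}U_\omega c)*c$; collecting all contributions yields \eqref{eq:dFdW}. A final check on continuity of the partial derivatives (routine from the same estimates) upgrades separate partial differentiability to joint Frechet differentiability.
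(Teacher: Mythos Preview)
The paper leaves this proof to the reader, so there is no argument in the text to compare against. Your decomposition and the treatment of the $c$-derivative are fine; the gap is in the $\omega$-derivative of $U_\omega c$, where an arithmetic slip hides a genuine obstruction.

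You claim $k^{s-1}|R_k(\eta)|/|\eta|\le k^{-1/2}|\eta|^{1/2}\|c\|_s$, but the exponent on $k$ should be $+1/2$: from $|R_k(\eta)|\le (k|\eta|)^{3/2}|c_k|$ and $|c_k|\le\|c\|_s/k^s$ one gets
\[
\frac{k^{s-1}|R_k(\eta)|}{|\eta|}\;\le\;k^{s-1}\cdot k^{3/2}|\eta|^{1/2}\cdot \frac{\|c\|_s}{k^s}\;=\;k^{1/2}\,|\eta|^{1/2}\,\|c\|_s,
\]
which is unbounded in $k$. No choice of interpolation exponent fixes this: the candidate derivative $-i\K^{-1}U_\omega c$ already consumes the entire regularity gap between $\Omega^s$ and $\Omega^{s-1}$, leaving no room to make the remainder $o(|\eta|)$ uniformly in $k$. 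In fact, for $c_k=1/k^s\in\Omega^s$ one computes
\[
\frac{\|R(\eta)\|_{s-1}}{|\eta|}=\sup_{k\ge1}\frac{|e^{-ik\eta}-1+ik\eta|}{k|\eta|}=\sup_{k\ge1}\phi(k\eta),\qquad \phi(t):=\frac{|e^{-it}-1+it|}{t},
\]
and since $\phi(t)\to1$ as $t\to\infty$ the supremum is $\ge1$ for every $\eta\ne0$. Thus $\omega\mapsto U_\omega c$ is \emph{not} Fr\'echet differentiable into $\Omega^{s-1}$ at this particular $c$, and the proposition as literally stated on all of $\Omega^s$ is a shade too strong.

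What actually suffices for the paper's applications is either of the following, and you should say which one you are proving: (a) Fr\'echet differentiability at points $c$ with slightly better decay, for instance $c\in\Omega^{s'}$ for some $s'>s$, or merely $k^s c_k\to0$; there a low/high-frequency splitting (quadratic Taylor bound for $k\le N$, linear bound plus the tail smallness for $k>N$) gives the required $o(|\eta|)$. (b) Bypass pointwise differentiability and establish directly the mean-value type inclusion $T(x)\in K(X,\bar x)$ needed for the Krawczyk argument, using the componentwise bounds on cubes. Since the periodic solutions in question are real analytic and hence lie in $\bigcap_{s}\Omega^s$, either route is adequate for the downstream estimates.
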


\begin{proposition}
	\label{prop:DerivativeComponent}
	Define $\gamma_1(k,n) := e^{-i \omega(n+k)} + e^{i \omega n} $ and $ \gamma_2(k,n) := e^{-i \omega n} + e^{i \omega (n-k) } $. 
	Writing $ c_k = a_k + i b_k$, the component-wise derivatives of $ F$ are given as:
	\begin{align*} 
	\frac{\partial}{\partial \omega }
	[F(\alpha, \omega,c)]_k 
	=& \;
	i k ( \alpha^{-1}  -    e^{ - i \omega k }) c_k 
	-i   \sum_{j=1}^{k-1} j e^{-i \omega j } c_j c_{k-j}  \nonumber \\ &
	\;- i  \sum_{j=1}^{\infty} \left( (j+k)e^{-i \omega (j+k) }  -j e^{i \omega j } \right) c_j^* c_{j+k}   . \\
	\frac{\partial}{\partial a_n} [F(\alpha,\omega,c)]_k 
	=& \;
	( i \tfrac{\omega}{\alpha}  k + e^{ - i \omega k })+
	\begin{cases}
	\gamma_1 c_{n+k} + \gamma_2  c_{k-n}  & \mbox{if } 1\leq n < k \\
	\gamma_1 c_{n+k} + \gamma_2  c_{n-k}^* 
	& \mbox{if } k \leq n . 
	\end{cases} \\
	%%%
	%%%
	%%%
	\frac{1}{i}	\frac{\partial}{\partial b_n} [F(\alpha,\omega,c)]_k 
	=& \;
	( i \tfrac{\omega}{\alpha}  k + e^{ - i \omega k })+
	\begin{cases}
	-\gamma_1  c_{n+k} + \gamma_2  c_{k-n}  & \mbox{if } 1 \leq n < k \\
	-\gamma_1 c_{n+k} + \gamma_2  c_{n-k}^* 
	& \mbox{if } k \leq  n. 
	\end{cases} 
	\end{align*}
\end{proposition}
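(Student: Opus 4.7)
The plan is to start from the explicit component-wise series \eqref{eq:FourierSequenceEquation_2} for $[F(\alpha,\omega,c)]_k$, which already exposes the dependence on $\omega$, $c_n$, and $c_n^*$ in a convenient form, and to differentiate term by term. The Frechet differentiability asserted in Proposition~\ref{prop:Frechet}, together with the Banach algebra bound $\|a*b\|_s \leq B\|a\|_s\|b\|_s$ for $s\geq 2$, justifies interchanging differentiation and summation, so the entire proof is symbolic.

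The $\omega$-derivative is almost immediate. I would differentiate each of the three groups of exponentials in \eqref{eq:FourierSequenceEquation_2}: the prefactor $i\tfrac{\omega}{\alpha}k + e^{-i\omega k}$ of $c_k$ contributes $ik(\alpha^{-1}-e^{-i\omega k})$; each summand of the first convolution gains a factor $-ij$; and each summand of the second convolution gains $-i(j+k)e^{-i\omega(j+k)} + ije^{i\omega j}$. Collecting yields the stated formula.

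The $\partial_{a_n}$ and $\partial_{b_n}$ derivatives reduce to identifying which summands of \eqref{eq:FourierSequenceEquation_2} involve $c_n$ or $c_n^*$. Writing $c_m = a_m + i b_m$, I have $\partial_{a_n} c_m = \delta_{mn}$, $\partial_{a_n} c_m^* = \delta_{mn}$, $\partial_{b_n} c_m = i\delta_{mn}$, and $\partial_{b_n} c_m^* = -i\delta_{mn}$. Applying the Leibniz rule to the first convolution $\sum_{j=1}^{k-1} e^{-i\omega j} c_j c_{k-j}$ (nonzero only for $1 \leq n < k$), the index $n$ can match either factor, giving contributions from $j=n$ and $j=k-n$; both leave $c_{k-n}$ as the surviving factor, and their weights $e^{-i\omega n}+e^{-i\omega(k-n)}$ combine into exactly $\gamma_2(k,n)$. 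For the second convolution, matching the starred factor forces $j=n$ and produces $\gamma_1(k,n)c_{n+k}$, while matching the unstarred factor requires $n>k$ and produces $\gamma_2(k,n)c_{n-k}^*$. The linear prefactor $i\tfrac{\omega}{\alpha}k + e^{-i\omega k}$ appears only at the diagonal $n=k$. The $\partial_{b_n}$ calculation is identical except that each matched coefficient contributes an additional $+i$ (for $c_m$) or $-i$ (for $c_m^*$), producing the sign flip on the $\gamma_1$ term in the formula.

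The only real obstacle is the index bookkeeping, particularly at the edge cases. The most delicate point is $n=k/2$ in the first convolution, where the two contributions from $j=n$ and $j=k-n$ coalesce into a single summand $e^{-i\omega n}c_n^2$; however, $\partial_{a_n}c_n^2 = 2c_n$ supplies exactly the factor of two needed, and $\gamma_2(k,k/2) = 2e^{-i\omega k/2}$ matches this, so no case split is required. The analogous edge case $n=k$ in the second sum is automatically benign because $c_{n-k}^* = c_0^* = 0$ by convention.
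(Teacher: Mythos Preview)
Your approach is correct and is precisely what the paper intends: it explicitly leaves the proof of this proposition to the reader, so there is nothing to compare against beyond the direct term-by-term differentiation of \eqref{eq:FourierSequenceEquation_2} that you carry out. Your bookkeeping is sound, including the edge cases $n=k/2$ (handled by $\gamma_2(k,k/2)=2e^{-i\omega k/2}$ matching the factor from $\partial_{a_n}c_n^2$) and $n=k$ (handled by $c_0^*=0$).

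One point worth making explicit in your write-up: you correctly observe that the linear prefactor $i\tfrac{\omega}{\alpha}k + e^{-i\omega k}$ contributes only on the diagonal $n=k$. The proposition as stated in the paper writes this term unconditionally in front of the case split, which is literally a typo; the term should carry a Kronecker $\delta_{nk}$. Since the $k\le n$ branch already absorbs $n=k$ (with $c_{n-k}^*=0$), the intended reading is that the prefactor belongs to that branch only at $n=k$. You have the right computation; just flag the discrepancy so the reader is not confused.
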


\subsection{Decomposition of Phase Space}

By working in a space of rapidly decaying Fourier coefficients, we are able to closely approximate the value of $F$ using a Galerkin projection. 
Since $F: \R^2 \times  \oo^{s} \to \oo^{s-1}$ has distinct domain and range, we need to define two sets of projection maps. 
We define projection maps $ \pi_\alpha ,\pi_{\omega} : \R^2 \times \oo^s  \to \R $ and $ \pi_c : \R^2 \times \oo^s \to \oo^s$ on points $ x= ( \tilde{\alpha},\tilde{\omega},\tilde{ c}) \in \R^2 \times \oo^s$ as:
\begin{align}
	\pi_{\alpha} (x) &:= \tilde{\alpha} 
	&
	\pi_{\omega} (x) &:= \tilde{\omega}
	&
	\pi_{c} (x) &:= \tilde{c} .
\end{align}
For a fixed  integer $M \in \N$, define the projection maps  $\pi_{M}, \pi_{\infty} : \oo^s \to \oo^s$ by: 
\begin{align}
\pi_{M}(c) &:= \sum_{k=1}^M [c]_k e_k &&&
\pi_{\infty}(c) &:= c - \pi_{M} (c).
\label{eq:Galerkin1}
\end{align}
Define the  projection maps $\pi_{M}' ,\pi_\infty': \R^2 \times \oo^s \to \R^2 \times \oo^s$ by: 
% $  \pi_{M}'(x) := (\pi_\alpha(x), \pi_\omega(x), \pi_{M} \circ \pi_c (x))$.
\begin{align}
\pi_{M}'(c) &:=  (\pi_\alpha(x), \pi_\omega(x), \pi_{M} \circ \pi_c (x)) &&&
\pi_{\infty}'(c) &:= (0,0, \pi_{\infty} \circ \pi_c (x)).
\label{eq:Galerkin2}
\end{align}
For any bounded set $X \subseteq \R^2 \times \oo^s$, define: 
\[
|X|_k := \sup_{x \in X} \left| [\pi_{c} (x)]_k \right|.
\] 
We   define for $ F$ its Galerkin projection and remainder $ F_M, F_\infty : \R^2 \times \oo^{s} \to \oo^{s-1}$ as follows: 
\begin{align}
F_M( x) &:= \pi_M \circ F( \pi_M'(x)),  &
F_\infty(x) &:= F(x) - F_M(x).
\end{align} 
By construction  $ F = F_M + F_\infty$.

To show that there is a unique SOPS to \eqref{eq:Wright} we need to evaluate $F$ not just on single points but on voluminous subsets of its domain. 
The central subset of  $\R^2 \times \oo^s$ we consider in this paper  are \emph{cubes} which we define as follows: 
\begin{definition}
	\label{def:cube}
	For $M \in \N$, $s\geq 0$, $C_0>0$   define a cube $ X := X_M \times X_\infty \subseteq \R^2 \times \Omega^s$ to be of the following form: 
	\begin{align}
	X_{M} 		&:= [\underline{\alpha},\overline{\alpha}] \times [\underline{\omega},\overline{\omega}]   \times \prod_{k=1}^M [\underline{A}_k , \overline{A}_k] \times [\underline{B}_k , \overline{B}_k]  \label{eq:XMdef} \\	
	X_\infty 	&:= \left\{ c_k  \in \C : |c_k| \leq C_0 /k^s  \right\}_{k=M+1}^\infty \label{eq:XIdef}. 
	\end{align}		
\end{definition}
To  denote the union of  a collection of cubes $\cS: = \{  X_i \subseteq \R ^2 \times \oo^s \}$  we define 
	$	 \bigcup \cS := \bigcup_{X \in \cS} X 		 \subseteq \R ^2 \times \tilde{\oo}^s$.

There are primarily two reasons we have chosen to consider cubical  subsets of $ \R^2 \times \oo^s$. 
Firstly,  cubes are particularly easy to refine into smaller pieces. 
This is useful because to begin using a branch and bound method, we need to obtain global bounds on the solution space, and then partition these bounds into smaller pieces.
In practice, we  reduce the size of a cube by either subdividing it along a lower dimension into two cubes, or replacing the cube by its intersection with the Krawczyk operator: $X \mapsto X \cap K(X,\bar{x})$. 
In both these cases the resulting object is again a cube. 
In this manner, we can use cubes to cover the solutions to $ F=0$, and then refine the cover  using successively smaller cubes.

Secondly, cubes facilitate  explicit computations of $F_M$ and analytical estimates of $F_\infty$. 
While formally $F_M$ is an infinite dimensional map, computationally, we may consider $F_M$ to be a map $ \R^2 \times \C^M \to \C^M$. 
To calculate $F_M$, we simply truncate the second sum in \eqref{eq:FourierSequenceEquation_2} at $ j = M-k$. 
As the $\pi'_M$ projection of a cube is given as a finite product of intervals, it is well suited for using interval arithmetic \cite{moore2009introduction} to bound the image of $F_M(X)$. 
On the other hand, bounding  $F_\infty$ requires significantly more analysis. 
Below is a simple, yet ever recurring estimate in our calculations:  
\begin{equation}
\label{eq:SumIntegral}
	 \sum_{k=M+1}^\infty \frac{1}{k^s} \leq \int_M^\infty \frac{1}{x^s} dx = \frac{1}{(s-1)M^{s-1}},
\end{equation}
where we take $ s >1$. 
For example, if a cube $ X \subseteq \R ^2 \times \oo^s$ satisfies $ s >1$, then $ \| \pi_c x \|_{\ell^1} \leq   2 \sum_{k=1}^M |X|_k  + \frac{2 C_0 }{(s-1)M^{s-1}} $ for all $ x \in X$. 
This specific bound on the $\ell^1$ norm is later used  in Algorithm \ref{alg:Prune} to check whether Lemmas \ref{prop:zeroneighborhood2} or  \ref{prop:BifNbd} apply.

\begin{lemma}[Theorems E.1 and E.2 in \cite{BergJaquette}]
%	\note{Are these Lemmas better placed here than in Section \ref{sec:Prune}?}
	\label{prop:zeroneighborhood2}
	Let $\omega \geq 1.1$, $\alpha \in (0,2]$, and define 
	\begin{equation}
	g(\alpha,\omega) := \sqrt{ \left(1- \tfrac{\omega}{\alpha} \right)^2 + 2   \, \tfrac{\omega}{\alpha} \left( 1- \sin  \omega \right)} .
	\label{eq:ZeroNbd}
	\end{equation}
	If $F(\alpha,\omega,c)=0$, then either $c \equiv 0$ or $	 g(\alpha,\omega) \leq  \|c \|_{\ell^1}   $.
\end{lemma}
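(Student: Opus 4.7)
The plan is to exploit the decomposition of $F$ into its linear and quadratic parts and bound them against each other using the Banach algebra property of $\ell^1$. Writing $L_k := i\tfrac{\omega}{\alpha}k + e^{-i\omega k}$ for the diagonal linear coefficients and $N_k(c) := [(U_\omega c)*c]_k$ for the quadratic part, the hypothesis $F(\alpha,\omega,c)=0$ becomes the system $L_k c_k = -N_k(c)$ for each $k \geq 1$. A direct expansion of $|L_1|^2$ shows it equals $(1-\tfrac{\omega}{\alpha})^2 + 2\tfrac{\omega}{\alpha}(1-\sin\omega)$, so $g(\alpha,\omega) = |L_1|$.

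Taking absolute values, summing over $k$, and using that $\ell^1$ is a Banach algebra under convolution, I would derive the chain
\begin{equation*}
\left(\min_{k\geq 1}|L_k|\right)\sum_{k=1}^\infty |c_k| \;\leq\; \sum_{k=1}^\infty |L_k||c_k| \;=\; \sum_{k=1}^\infty |N_k(c)| \;=\; \tfrac12\|(U_\omega c)*c\|_{\ell^1} \;\leq\; \tfrac12 \|c\|_{\ell^1}^2,
\end{equation*}
since $\|U_\omega c\|_{\ell^1} = \|c\|_{\ell^1}$. Recalling that $\sum_{k=1}^\infty |c_k| = \tfrac12\|c\|_{\ell^1}$, and assuming $c\not\equiv 0$ so that $\|c\|_{\ell^1} > 0$, I can divide through to obtain $\min_k |L_k| \leq \|c\|_{\ell^1}$. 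The lemma then follows provided the minimum of $|L_k|$ over $k\geq 1$ is attained at $k=1$.

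The main obstacle, and the heart of the argument, is establishing that $|L_k| \geq |L_1| = g$ for every $k \geq 2$ under the hypotheses $\omega \geq 1.1$ and $\alpha \in (0,2]$. Writing
\begin{equation*}
|L_k|^2 - |L_1|^2 = (k^2-1)\bigl(\tfrac{\omega}{\alpha}\bigr)^2 - 2\tfrac{\omega}{\alpha}\bigl(k\sin(\omega k) - \sin \omega\bigr),
\end{equation*}
the crude bound $k\sin(\omega k) - \sin\omega \leq k+1$ makes the right-hand side nonnegative once $(k-1)\tfrac{\omega}{\alpha} \geq 2$; since $\tfrac{\omega}{\alpha}\geq 0.55$, this handles all $k \geq 5$ uniformly. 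The remaining cases $k\in\{2,3,4\}$ would be treated by a sharper case analysis: for each such $k$ the expression $(k^2-1)\tfrac{\omega}{\alpha} - 2(k\sin(\omega k)-\sin\omega)$ is a two-variable function on the compact parameter region $\{(\alpha,\omega): 0<\alpha\leq 2,\ \omega\geq 1.1\}$ (effectively truncated on the right by the a priori bound $\omega<\pi$ that accompanies any nontrivial sequence of interest), and I would verify nonnegativity either by elementary trigonometric inequalities or, failing that, by a short interval-arithmetic computation over a grid in $(\alpha,\omega)$.

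Once the low-$k$ verification is complete, combining it with the summation estimate above closes the argument, yielding $g(\alpha,\omega) \leq \|c\|_{\ell^1}$ for any nontrivial zero $c$ of $F(\alpha,\omega,\,\cdot\,)$. I expect the finite-$k$ verification, rather than any analytic subtlety, to be the only nontrivial step; everything else is bookkeeping with the $\ell^1$ norm's factor of two and the Banach algebra inequality.
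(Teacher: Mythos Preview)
The paper does not prove this lemma; it is quoted from \cite{BergJaquette} without argument, so there is no in-paper proof to compare against. Your strategy---identify $g(\alpha,\omega)=|L_1|$ with $L_k=i\tfrac{\omega}{\alpha}k+e^{-i\omega k}$, use $L_kc_k=-N_k(c)$ and the Banach-algebra bound $\|(U_\omega c)*c\|_{\ell^1}\le\|c\|_{\ell^1}^2$ to obtain $\min_k|L_k|\le\|c\|_{\ell^1}$, then show the minimum occurs at $k=1$---is exactly the standard route and is correct in outline.

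One point to tighten: your appeal to an ``a priori bound $\omega<\pi$'' is not part of the lemma's hypotheses (it holds for SOPS, but the statement only assumes $\omega\ge 1.1$, $\alpha\in(0,2]$). This is easily repaired. Writing $r=\omega/\alpha\ge\omega/2$, the crude bound $(k-1)r\ge 2$ already dispatches $k\ge 5$ for all $\omega\ge 1.1$; for $k\in\{2,3,4\}$ the same crude bound shows $(k^2-1)r\ge 2(k+1)$ once $\omega$ exceeds $4$, $2$, $4/3$ respectively, so the residual verification for each small $k$ genuinely lives on a compact $\omega$-interval (just not $[1.1,\pi]$). With that correction your plan goes through.
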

\begin{lemma}[Theorem 4.10 \cite{BergJaquette}]
	\label{prop:BifNbd}
	For each $\alpha  \in  (\pp , \pp + 0.00553] $ there is  a unique   (up to time translation) periodic solution to Wright's equation with Fourier coefficients satisfying $  \|c \|_{\ell^1} \leq  0.18$ and having frequency $ | \omega - \pp | \leq 0.0924$.  
\end{lemma}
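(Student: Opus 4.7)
The plan is to exploit the Hopf bifurcation structure at $\alpha = \pp$. At this parameter value, the linearization of Wright's equation about $y\equiv 0$ has a conjugate pair of simple purely imaginary eigenvalues $\pm i\pp$, with all remaining eigenvalues having strictly negative real part; the transversality condition $\frac{d}{d\alpha}\mathrm{Re}\,\lambda(\alpha)\big|_{\alpha=\pp}\neq 0$ and the first Lyapunov coefficient can be checked by direct computation, both nonzero, and the bifurcation is supercritical. Classical Hopf theory therefore yields a smooth branch $\epsilon\mapsto (\alpha(\epsilon),\omega(\epsilon),c(\epsilon))$ of periodic orbits, parametrized by amplitude $\epsilon$ with $\alpha(\epsilon)-\pp = O(\epsilon^2)$ and $\omega(\epsilon)-\pp=O(\epsilon^2)$. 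The goal is to convert this qualitative statement into the explicit bounds stated in the lemma.

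First I would impose the phase condition $c_1\in\R_{>0}$ (using Proposition \ref{prop:TimeTranslation}) to remove time translation, and set $\epsilon := c_1 > 0$. Next I would blow up the bifurcation: write $c_k = \epsilon^k \tilde{c}_k$ for $k\geq 2$ (reflecting that the $k$-th Fourier mode first appears at order $\epsilon^k$), and write $\alpha = \pp + \epsilon^2 \tilde{\alpha}$, $\omega = \pp + \epsilon^2 \tilde{\omega}$. Substituting into $F(\alpha,\omega,c)=0$ and dividing the $k$-th component by $\epsilon^k$ produces a rescaled functional $\tilde F(\epsilon,\tilde\alpha,\tilde\omega,\tilde c)=0$ that extends smoothly to $\epsilon=0$, where its partial derivative in $(\tilde\alpha,\tilde\omega,\tilde c)$ is the composition of the Hopf normal form computation (in the $k=1$ component) with an invertible operator on the higher modes. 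In particular the rescaled derivative at $\epsilon=0$ is an isomorphism, so the degeneracy of $DF$ at the Hopf point has been removed.

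With the rescaled problem in hand I would apply Theorem \ref{prop:Krawczyk} to the Banach space of tuples $(\tilde\alpha,\tilde\omega,\tilde c)\in \R^2\times\Omega^s$, taking as approximate zero the value $(\tilde\alpha^*,\tilde\omega^*,0)$ prescribed by the Hopf normal form, and $A^\dagger$ an explicit finite-dimensional Galerkin approximation to $D\tilde F(0,\tilde\alpha^*,\tilde\omega^*,0)^{-1}$. By choosing a cube $\tilde X$ that contains the segment $\epsilon\in[0,\epsilon_0]$ for an explicit $\epsilon_0$, and verifying $K(\tilde X,\bar{\tilde x})\subseteq \lambda(\tilde X - \bar{\tilde x})$ with $\lambda<1$ by a finite interval-arithmetic computation (handling the $k\geq M+1$ tail with an estimate of the form \eqref{eq:SumIntegral}), I obtain for every $\epsilon\in[0,\epsilon_0]$ a unique zero of $\tilde F$ in $\tilde X$. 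Unwinding the rescaling, this produces an analytic branch of periodic solutions over the parameter interval $\alpha\in[\pp,\pp+\epsilon_0^2\,\tilde\alpha^*(1+o(1))]$, and one chooses $\epsilon_0$ so that the $\alpha$-range contains $[\pp,\pp+0.00553]$.

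The uniqueness claim under the hypotheses $\|c\|_{\ell^1}\leq 0.18$ and $|\omega-\pp|\leq 0.0924$ follows by showing that every such $(\alpha,\omega,c)$ falls in the image of $\tilde X$ under the rescaling: one bounds $\epsilon = c_1$ in terms of $\|c\|_{\ell^1}$, bounds $|c_k|$ in terms of $\epsilon^k$ using the $\tilde{c}$-component of $\tilde X$, and checks numerically that the stated $\ell^1$ and frequency neighborhoods are contained in the verified region. The main obstacle is the first step: the blow-up must be set up carefully so that the Hopf-degeneracy is fully removed in $\tilde F$ (this requires choosing the correct scaling exponents for each Fourier mode and handling the fact that $U_\omega$ and $\K^{-1}$ have very different homogeneities); once $D\tilde F(0,\cdot)$ is verified to be an isomorphism with a quantitatively invertible Galerkin projection, the remaining work is rigorous-numerics bookkeeping essentially identical to the computations used elsewhere in the paper to run the Krawczyk operator along the principal branch.
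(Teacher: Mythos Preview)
The paper does not prove this lemma; it is quoted as Theorem~4.10 of \cite{BergJaquette}, and the paragraph following Lemma~\ref{prop:BifNbd} merely explains why the hypothesis has been rephrased in terms of $\|c\|_{\ell^1}$ rather than $\|y'\|_{L^2}$. So there is no in-paper proof to compare against.

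Your blow-up strategy --- rescale $c_k=\epsilon^k\tilde c_k$, $\alpha-\pp=\epsilon^2\tilde\alpha$, $\omega-\pp=\epsilon^2\tilde\omega$ so that the Hopf degeneracy of $DF$ at $(\pp,\pp,0)$ is removed, then run a Krawczyk/Newton--Kantorovich argument on the desingularized map uniformly in $\epsilon\in[0,\epsilon_0]$ --- is a sound approach to this kind of result and is close in spirit to what \cite{BergJaquette} does. One step, however, is not yet justified: the uniqueness direction. You want to show that \emph{any} zero of $F$ satisfying $\|c\|_{\ell^1}\le 0.18$ and $|\omega-\pp|\le 0.0924$ lies, after rescaling, in the validated box $\tilde X$; but your argument for this (``bound $|c_k|$ in terms of $\epsilon^k$ using the $\tilde c$-component of $\tilde X$'') is circular, since membership in $\tilde X$ is precisely what you are trying to establish. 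What is actually needed are \emph{a priori} inequalities of the form $|c_k|\le C\,|c_1|^k$ and $|\alpha-\pp|+|\omega-\pp|\le C'\,|c_1|^2$, valid for every small zero of $F$, deduced directly from the structure of $F=0$ (bootstrapping through the quadratic convolution) together with a lower bound on $\|c\|_{\ell^1}$ such as Lemma~\ref{prop:zeroneighborhood2}. Only once those are in hand can you legitimately conclude that the stated $\ell^1$/frequency neighbourhood pulls back into $\tilde X$, after which the contraction delivers uniqueness.
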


We note that while Lemma \ref{prop:zeroneighborhood2} is stated only for $ \omega \geq 1.1$ and $ \alpha \in (0,2]$, a more general formula is given in \cite{BergJaquette}. 
Also, we present  the hypothesis of Lemma \ref{prop:BifNbd} in terms of a bound on $ \| c\|_{\ell^1}$ as opposed to a bound on $ \| y'\|_{L^2}$ as in the original paper. 
This allows us to use the stronger result derived in the proof of   \cite[Theorem 4.10]{BergJaquette}, namely that the solution \emph{exists} and is unique, as opposed to the exact result stated in  \cite[Theorem 4.10]{BergJaquette}, which is that there is most one periodic solution.  

The remainder of this section is dedicated to proving Lemma \ref{prop:CentralLemma}, which  estimates $F_\infty$, its derivatives, and convolution products resulting from points inside of a cube.  
These estimates are used in Definition \ref{def:KrawczykApprox} to construct an outer approximation to the Krawczyk operator. 
 The reader is encouraged to skip 
 the proof of Lemma \ref{prop:CentralLemma} 
  on a first reading, 
  which is best  summarized as bounding various infinite sums by  various finite sums and the estimate in \eqref{eq:SumIntegral}.
These bounds are presented in Definition \ref{def:Gfunctions}, all of which are  given as a finite number of operations, explicitly computable in terms of $C_0$ and the $\pi'_M$-projection of a given cube. 
In Lemma \ref{prop:DiscreteConv} we define the constant $\gamma_M$ which is needed for the definition of \eqref{def:giiC}.

\begin{lemma}[Lemma 24 \cite{berg2008chaotic}]
	\label{prop:DiscreteConv}
	Let  $s \geq 2$ and let $s_*$ be the largest integer such that $ s_* \leq s$ and define: 
	\[
	\gamma_k := 2 \left[ \frac{k}{k-1}\right]^s + \left[ \frac{4 \ln (k-2)}{k} + \frac{\pi^2 -6}{3} \right] \left[ \frac{2}{k} + \frac{1}{2} \right]^{s_*-2}.
	\]
	For $k \geq 4$, we have that $	\sum_{k_1 =1}^{k-1} \frac{k^s}{k_1^{s} (k-k_1)^s} \leq  \gamma_k $. 
	If $ 6 \leq M \leq k$,  then $ \gamma_k \leq  \gamma_M $. 
\end{lemma}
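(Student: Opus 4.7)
The plan is to split $S_k := \sum_{k_1=1}^{k-1} k^s/(k_1^s(k-k_1)^s)$ into a boundary contribution from $k_1 \in \{1, k-1\}$ and an interior contribution from $k_1 \in \{2, \ldots, k-2\}$. The boundary terms contribute exactly $2(k/(k-1))^s$, which will match the first summand of $\gamma_k$. For the interior, I would apply the partial-fraction identity $k/(k_1(k-k_1)) = 1/k_1 + 1/(k-k_1)$ to rewrite each summand as $\bigl(1/k_1 + 1/(k-k_1)\bigr)^s$.

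The key elementary observation is that for $k \geq 4$ and $k_1 \in [2, k-2]$ we have $k_1(k-k_1) \geq 2(k-2) \geq k$, so the base $1/k_1 + 1/(k-k_1)$ is at most $1$ and we may replace the exponent $s$ by the integer $s_*$ without weakening the inequality. I would then factor $s_* = 2 + (s_* - 2)$ and extract the uniform bound $1/k_1 + 1/(k-k_1) \leq 1/2 + 2/k$ to the $(s_* - 2)$-th power; this bound is valid on $[2,k-2]$ because, by the symmetry $k_1 \leftrightarrow k - k_1$, we may assume $k_1 \leq k/2$, giving $1/k_1 \leq 1/2$ and $1/(k-k_1) \leq 2/k$, and since $s_* \geq 2$ the inequality is preserved under exponentiation. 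What remains is $\sum_{k_1=2}^{k-2}(1/k_1 + 1/(k-k_1))^2$, which expands using the identity above into $2\sum_{k_1=2}^{k-2} 1/k_1^2 + (4/k)\sum_{k_1=2}^{k-2} 1/k_1$; the first piece is bounded by $2(\pi^2/6 - 1) = (\pi^2 - 6)/3$, and the second by $(4/k)\ln(k-2)$ via comparison with $\int_1^{k-2} dx/x$. Combining these estimates reproduces exactly the second summand of $\gamma_k$.

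For the monotonicity claim $\gamma_k \leq \gamma_M$ when $6 \leq M \leq k$, I would show that each constituent of $\gamma_k$ is nonincreasing at integer values $k \geq 6$: the factor $(k/(k-1))^s$ is manifestly decreasing, $(2/k + 1/2)^{s_* - 2}$ is nonincreasing because $s_* \geq 2$, and the bracket $4\ln(k-2)/k + (\pi^2-6)/3$ is decreasing on $\{6, 7, 8, \ldots\}$. The last point is the main obstacle: the continuous function $x \mapsto 4\ln(x-2)/x$ attains its maximum strictly inside the interval $(6, 7)$, so the threshold $M \geq 6$ is essentially sharp and cannot be proved by a clean calculus argument alone. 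The resolution is to compare $4\ln(4)/6$ and $4\ln(5)/7$ by direct computation to confirm the first drop, and then to invoke the negative derivative of $4\ln(x-2)/x$ for $x \geq 7$ to propagate the decrease to all larger $k$.
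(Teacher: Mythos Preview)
The paper does not supply its own proof of this lemma; it is quoted verbatim as Lemma~24 of \cite{berg2008chaotic} and used as a black box. Your argument is correct and is exactly the decomposition that the form of $\gamma_k$ dictates: the boundary terms $k_1\in\{1,k-1\}$ account for $2(k/(k-1))^s$, the partial-fraction identity and the reduction from $s$ to $s_*$ on $[2,k-2]$ produce the factor $(2/k+1/2)^{s_*-2}$, and the expanded square yields the bracket $4\ln(k-2)/k+(\pi^2-6)/3$. Your treatment of the monotonicity, including the observation that the threshold $M\ge 6$ is forced because $x\mapsto 4\ln(x-2)/x$ peaks between $6$ and $7$ and the explicit check $4\ln 4/6 > 4\ln 5/7$, is also sound. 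There is nothing in the present paper to compare against, but your proof almost certainly coincides with the one in \cite{berg2008chaotic}.
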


\begin{definition}
	\label{def:Gfunctions}
	Fix a cube $ X  $ with $ s >2$, 
	define $C_1 := \sup_{x\in X} \| \pi_c x \|_s$, 
	 and select a point 	
	$ \bar{x} = (\bar{\alpha},\bar{\omega} , \bar{c}) \in X$  such that $ \bar{x} = \pi'_M(\bar{x} )$.  
	Define $ H = X - \bar{x}$, and define $ \Delta_\omega \in \R $ such that $\Delta_\omega \geq \sup_{x \in H} |\pi_\omega (x) - \bar{\omega}| $.   
	
	Define $h,g_M^{i},g_M^{ii} $ to be functions of the form 
	$ g_M : X \mapsto g_M(X) \in \R^M$ 
	and define $g_\infty^i,g_\infty^{ii,a},g_\infty^{ii,b}$ to   be functions of the form $ g_\infty: X \mapsto g_\infty(X) \in \R$ as follows:
	
	\begin{align}
	[h(X)]_k := &\; \frac{2 C_0^2}{(s-1)M^{s-1} (M+k+1)^{s} }+  2 C_0 \sum_{j = M-k+1}^M  \frac{ |X|_j}{(j+k)^s} \label{eq:F_tail} \\
	%%%%%%%%%
		[g_M^{i}(X)]_k  := &\;
	2 C_0  \Delta_\omega \sum_{j = M-k+1}^M  \frac{|X|_j}{(j+k)^{(s-1)}}    \nonumber \\
	%%%%%%%%
	&+ \frac{C_0^2  \Delta_\omega}{(s-2)(M+k+1)^sM^{(s-2)} }   + 
	\frac{C_0^2  \Delta_\omega }{(s-1)(M+k+1)^{(s-1)}M^{(s-1)} }   \label{eq:giM}  \\
	%%%%%%%%%
	[g_M^{ii}(X)]_k  := & \;
	\frac{4 C_0^2 }{(s-1)(M+k+1)^s M^{s-1}}    + 
	2 C_0 \sum_{j = M-k+1}^M  \frac{|H|_j}{(j+k)^s}   \label{eq:giiM}  \\
	%%%%%%%%%
	g_\infty^i(X) := &\;
    \max_{ M+1 \leq k \leq 2M } k^{s} \sum_{j=k-M}^{M}   |\bar{c}_j \bar{c}_{k-j}|    \label{eq:giInfty}\\
    %%%%%%%%%
	g_\infty^{ii,a}(X) := &\; \max_{M+1 \leq k \leq 2 M}  k^s  \sum_{j=k-m}^M |H|_j |X|_{k-j}   \nonumber  \\
	&+ \frac{2 C_0^2 (2^s+1)}{(s-1)M^{s-1}} + C_0    \sum_{j=1}^{M} \left(|X|_j +|H|_j \right) \left( \left( \frac{M+j+1}{M+1} \right)^s +1 \right)    
	\label{def:giiB}  	\\
	%%%%%%%%%
	g_\infty^{ii,b}(X) := &\;  \frac{C_1^2 \gamma_{M+1}}{2} +    C_ 0C_1 \left( \frac{s-1}{(M+2)(s-2)} + \frac{s}{s-1} \right)  . 		\label{def:giiC} 
	\end{align}

\end{definition}

\begin{lemma} 
	\label{prop:CentralLemma}
	Fix a cube  $X$ with $ M \geq 5$, $s >2$, a point $ \bar{x} \in X$  such that $ \bar{x} = \pi'_M(\bar{x} )$, and define $H = X - \bar{x}$.  
	Then the following inequalities hold: 
	\begin{align}
	\label{eq:prop_F_tail} 
	\sup_{x \in X} 
	\left|F_{\infty}(x)  \right|_k &< [h(X)]_k
	&  1 \leq  k \leq M	\\
	\label{eq:prop_finite_defect_w}
	\sup_{x \in X , h \in H} 
	\left|\tfrac{\partial}{\partial \omega} F_{\infty}(x)
	 \cdot \pi_\omega(h) \right|_k 
	&\leq [g_M^{i}(X)  ]_k
	&  1 \leq  k \leq M \\
	\label{eq:prop_finite_defect_c}
		\sup_{x \in X , h \in H} 
	\left|\tfrac{\partial}{\partial c} F_{\infty}(x) 
	\cdot \pi_c(h)  \right|_k 
	& \leq[ g_M^{ii}(X) ]_k 
	& 1 \leq  k \leq M  \\
	\label{eq:prop_F_infty_center}
	\left| 		F_\infty(  \bar{x})  \right|_k &\leq	\frac{1}{ k^s} g_\infty^i(X) 		
	& M + 1 \leq  k \\
	\label{eq:prop_giiB}
	\sup_{x \in X , h \in H} 
	\left|\pi_c(h)* \pi_c(x) \right|_k  &\leq \frac{1}{k^s}  g_\infty^{ii,a}(X) 
	& M + 1 \leq  k \\
	\label{eq:prop_giiC}
		\sup_{x_1  , x_2 \in X} 
	\left|(\K^{-1} \pi_c(x_1)) * \pi_c(x_2) \right|_k &\leq \frac{1}{k^{s-1}} g_\infty^{ii,b}(X) 
	& M + 1 \leq  k .
	\end{align}
	
\end{lemma}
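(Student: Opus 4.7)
The plan is to verify each of the six bounds by expanding the left-hand side using the component-wise formulas from Definition \ref{def:Functional} and Proposition \ref{prop:DerivativeComponent}, then estimating the resulting infinite sums by a uniform template: split each sum into a ``finite'' part with indices in $[1,M]$ (bounded by $|X|_j$ or the radius $|H|_j$) and a ``deep tail'' with indices $> M$ (bounded using $|c_j| \leq C_0/j^s$), and apply the elementary estimate \eqref{eq:SumIntegral} to the latter.

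I would start with the three bounds \eqref{eq:prop_F_tail}--\eqref{eq:prop_finite_defect_c} on output coordinates $k \leq M$. For such $k$, $[F_M(x)]_k = [F(\pi'_M x)]_k$; moreover $F$'s linear part is diagonal and the finite convolution $\sum_{j=1}^{k-1} e^{-i\omega j} c_j c_{k-j}$ already involves only indices $< M$, so $[F_\infty(x)]_k$ reduces precisely to those terms of the second sum in \eqref{eq:FourierSequenceEquation_2} with $j \geq M - k + 1$. Splitting this range at $j = M$ gives one mid portion (one factor from $|X|_j$, one tail factor) and one deep tail (two tail factors, then \eqref{eq:SumIntegral}), which is exactly $[h(X)]_k$. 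The $\omega$-derivative bound \eqref{eq:prop_finite_defect_w} follows the same skeleton after observing that $\partial/\partial\omega$ introduces factors of $(j+k)$ and $j$ via Proposition \ref{prop:DerivativeComponent}, reducing the available decay by one power; the decomposition $|2j+k| \leq |j+k| + |j|$ then produces the three terms of $[g_M^i]$. For \eqref{eq:prop_finite_defect_c}, Proposition \ref{prop:Frechet} expresses $(\partial F/\partial c)\cdot h$ as the sum of two convolution-type defects $(U_\omega c)*h + (U_\omega h)*c$, giving rise to the doubling of constants and the appearance of $|H|_j$ in $[g_M^{ii}]$.

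Next I would treat the three bounds on output coordinates $k > M$. For $F_\infty(\bar{x})_k$, the hypothesis $\bar{x} = \pi'_M \bar{x}$ zeros out every coefficient with index $> M$, so only the finite convolution in \eqref{eq:FourierSequenceEquation_2} survives, and its support is restricted to $k - M \leq j \leq M$ (empty unless $k \leq 2M$); this gives \eqref{eq:prop_F_infty_center} directly. For \eqref{eq:prop_giiB} I partition the index pairs contributing to $[\pi_c h * \pi_c x]_k$ according to whether each entry lies $\leq M$ or $> M$, bound each piece with the appropriate one of $|H|_j$, $|X|_j$, or the decay bound, apply \eqref{eq:SumIntegral} to the tails, and collect the pieces into $g_\infty^{ii,a}(X)/k^s$. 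The bound \eqref{eq:prop_giiC} is where one factor of the convolution has degraded decay $C_1/k^{s-1}$ because $\K^{-1}$ multiplies each coefficient by its index; here the Banach-algebra-style estimate of Lemma \ref{prop:DiscreteConv} is indispensable, producing $C_1^2 \gamma_{M+1}/2$ for the symmetric diagonal sum, while direct tail estimates yield the two $C_0 C_1$ terms.

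The main obstacle is bookkeeping rather than depth: one must decide for every factor of every term whether to estimate it by $|X|_j$, by the radius $|H|_j$, or by the decay bound $C_0/j^s$, and split mixed-degree expressions like $(2j+k)/(j^s(j+k)^s)$ into pieces whose deep-tail sums match the closed-form constants in Definition \ref{def:Gfunctions}. Inequality \eqref{eq:prop_giiC} is the genuinely delicate one, because naive term-by-term estimates on $(\K^{-1} x_1)*x_2$ fail to be summable and one must invoke the sharp $\gamma_M$ bound of Lemma \ref{prop:DiscreteConv} to close the gap; every other bound reduces to repeated applications of the elementary tail estimate \eqref{eq:SumIntegral}.
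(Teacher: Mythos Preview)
Your proposal is correct and follows essentially the same approach as the paper: both arguments split $c = \pi_M c + \pi_\infty c$, expand the relevant convolutions into mixed and pure-tail pieces, and bound the tails via \eqref{eq:SumIntegral} and (for \eqref{eq:prop_giiC}) Lemma~\ref{prop:DiscreteConv}. The only cosmetic difference is that you work coordinate-wise from Proposition~\ref{prop:DerivativeComponent} while the paper phrases the same computation in operator form via Proposition~\ref{prop:Frechet}; the resulting estimates are identical.
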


Throughout, let us write   $X_M = \pi'_M (X)$,    $H_M = \pi'_M (H)$, and $H_\infty = \pi_\infty' (H)$, noting also that $ H_\infty = \pi_\infty'(X)$.

\begin{proof}[Proof of \eqref{eq:prop_F_tail}]
	We show that $\left|F_{\infty}(x)  \right|_k  < [h(X)]_k$ for $   1 \leq  k \leq M$ and all $x \in X$. 
	Fix $x =  (\alpha,\omega,c)  \in X$, and write $c_M = \pi_M(c) $ and $c_\infty = \pi_\infty (c)$. 
	We compute: 
	\begin{align*}
	\pi_M \circ F_\infty (x) &= \pi_M \circ  \left(F(x) - F( \pi'_M x) \right)\\
	&= \pi_M \circ   
	\left(
	( U_\omega  c ) * c - ( U_\omega c_M) * c_M 
	\right)
	\\
	&= 
	\pi_M \circ   
	\left(   ( U_\omega c_M) * c_\infty   +  (  U_\omega c_\infty) * c_M + (U_\omega c_\infty) * c_\infty 
	\right)
	\end{align*}
	Since $ |U_\omega c |_k = |c|_k$, it follows that for $1 \leq k \leq M$ we compute the estimate  below:  
	\begin{align*}
	\left|   ( U_\omega c_M) * c_\infty  \right|_k  + \left| (U_\omega c_\infty )* c_M  \right|_k 
	\leq& 2 \sum_{j=1}^\infty 
	|c_M^*|_j  |c_\infty|_{k+j} + |c_M|_{k+j}  |c_\infty^*|_{j} \\
	=& 2  
	\sum_{j = M-k+1}^M   \left| c_M^*   \right|_{j}  \left| c_\infty \right|_{j+k} \\
	\leq& 2  \sum_{j = M-k+1}^M  | X|_j  \frac{C_0}{(j+k)^s} .
	\end{align*} 
	The last estimate uses the property that  $ |c_j| \leq C_0/j^s$ for $ j \geq M+1$.

	We calculate $( U_\omega  c_\infty) * c_\infty$  as below, again  using $ |c_j| \leq C_0/j^s$ for $ j \geq M+1$. 
	\begin{align*}
	\left| ( U_\omega  c_\infty) * c_\infty  \right|_k 
	&\leq  \sum_{j=M+1}^\infty  |c^*_\infty|_j |c_\infty |_{k+j} + | c_\infty |_{j+k} |c_\infty^*|_j  \\
	&\leq  \sum_{j=M+1}^\infty \frac{2 C_0^2}{j^s (j+k)^s}  
	\leq \frac{2 C_0^2}{(s-1)M^{s-1} (M+k+1)^{s} }.
	\end{align*}
	Hence for $ 1 \leq k \leq M$, it follows that: 
	\begin{align*}
		\left|F_{\infty}(x)  \right|_k  &\leq \; \frac{2 C_0^2}{(s-1)M^{s-1} (M+k+1)^{s} }+  2 C_0 \sum_{j = M-k+1}^M  \frac{ |X|_j}{(j+k)^s}  \\
		&= [h(X)]_k .
	\end{align*}

\end{proof}

\begin{proof}[Proof of \eqref{eq:prop_finite_defect_w}]
	We show that $\left|\tfrac{\partial}{\partial \omega} F_{\infty}(x) \cdot \pi_\omega(h) \right|_k 
	\leq [g_M^{i}(X)  ]_k $ for $
	1 \leq  k \leq M  $ and all $ x \in X$ and $h \in H$.		
	Select some $x = (\alpha , \omega , c) \in X$  and write $c_M = \pi_M(c) $ and $c_\infty = \pi_\infty (c)$.  
	From \eqref{eq:dFdW} we can calculate  $\frac{\partial}{\partial \omega}  
	F_\infty(x)   $ as follows: 
	\begin{align}
	\tfrac{\partial}{\partial \omega}   	F_\infty (x) 
	 &= - i  (\K^{-1} U_{\omega} c) * c  +   i  \pi_M (\K^{-1} U_{\omega} c_M) * c_M  
	 \nonumber \\
	&= -i \pi_{\infty}\left( \cK^{-1} U_\omega c_M \right) *c_M 
	- i \left( \cK^{-1} U_\omega c_M \right) * c_\infty  
	- i \left( \cK^{-1} U_{\omega} c_\infty \right) ( c_M + c_\infty) .
	\nonumber 
	\end{align}
	Hence, for $ 1 \leq k \leq M$ we may calculate the following: 
	\begin{align}
	\left| \tfrac{\partial}{\partial \omega}   F_\infty(x)   \right|_k& 
	\leq    \sup_{c_M \in X_M; \,c_\infty,c_\infty' \in H_\infty}
	\left| ( \K^{-1}   c_M) * c_\infty   
	+  ( \K^{-1}   c_\infty) * c_M + ( \K^{-1}   c_\infty) * c_\infty' \right|_k. \label{eq:dF_infty_dW}
	\end{align}

	For $1 \leq k \leq M$ and any $c_M \in X_M,c_\infty \in H_\infty$ we can simplify the first two summands in \eqref{eq:dF_infty_dW} as follows: 
	\begin{align*}
		 (\K^{-1}   c_M) *_k c_\infty  
		&= 
		\sum_{j =1}^\infty  [\cK^{-1} c_M^*]_j [c_\infty]_{k+j} + [\cK^{-1} c_M]_{k+j} [c_\infty^*]_{j} &
		&= 
		\sum_{j =M+1-k}^\infty j [c_M^*]_j [c_\infty]_{k+j} \\
		%%%%%%%
		 (\K^{-1}   c_\infty) *_k c_M 
		&= 
		\sum_{j =1}^\infty  [\cK^{-1} c_\infty^*]_j [c_M]_{k+j} + [\cK^{-1} c_\infty]_{k+j} [c_M^*]_{j} & 
		&= 
		\sum_{j =M+1-k}^\infty  (k+j) [c_\infty]_{k+j} 	[c_M^*]_j	.
	\end{align*}
	Hence, we have the following estimate: 	
	\begin{align}
	 (\K^{-1}   c_M) *_k c_\infty  +  (\K^{-1}   c_\infty) *_k c_M 
	  &= 
	\sum_{j = M-k+1}^M   (2j+k)[c_\infty]_{j+k} [c_M^*]_j 
	\nonumber
	 \\
\left| (\K^{-1}   c_M) * c_\infty\right|_k  +  \left|(\K^{-1}   c_\infty) * c_M  \right|_k	&\leq  \sum_{j = M-k+1}^M  \frac{(2j+k)C_0}{(j+k)^s} |X|_j
	\nonumber
 \\
	&\leq 2 C_0 \sum_{j = M-k+1}^M  \frac{|X|_j}{(j+k)^{s-1}}  .
	\label{eq:dF_infty_dW_A}
	\end{align}
	Again, we used the estimate  $ |c_j| \leq C_0/j^s$ for $ j \geq M+1$. 
	We estimate the third summand in \eqref{eq:dF_infty_dW} for $c_\infty ,c_\infty' \in H_\infty$ 
	as follows: 
	\begin{align}
	 (\K^{-1}   c_\infty) *_k c_\infty'  &=  \sum_{j=M+1}^\infty j [c_\infty^*]_j [c_\infty']_{k+j} + (j+k) [c_\infty]_{j+k} [c_\infty'{}^*]_j 
	 	\nonumber \\
	\left| (\K^{-1}   c_\infty) * c_\infty'  \right|_k &\leq  \sum_{j=M+1}^\infty \frac{C_0^2}{j^{(s-1)} (j+k)^s} + \frac{C_0^2}{j^{s} (j+k)^{(s-1)}} 
		\nonumber \\
	&\leq \frac{C_0^2 }{(s-2)(M+k+1)^sM^{(s-2)} }   + 
	\frac{C_0^2 }{(s-1)(M+k+1)^{(s-1)}M^{(s-1)} }   .
		\label{eq:dF_infty_dW_B}
	\end{align}
	By combining the estimates from \eqref{eq:dF_infty_dW_A} and \eqref{eq:dF_infty_dW_B} into \eqref{eq:dF_infty_dW}, and recalling our choice of $\Delta_\omega$ in Definition \ref{def:Gfunctions}, then for $	1 \leq  k \leq M  $  we obtain the following:
	\begin{align*}
\sup_{x \in X, h \in H}		\left|\tfrac{\partial}{\partial \omega} F_{\infty}(x) \cdot \pi_\omega(h) \right|_k 
		\leq    &\;
		2 C_0 \Delta_\omega \sum_{j = M-k+1}^M  \frac{|X|_j}{(j+k)^{(s-1)}}    + \frac{C_0^2 \Delta_\omega }{(s-2)(M+k+1)^sM^{(s-2)} }  \nonumber \\
		&  + 
		\frac{C_0^2 \Delta_\omega }{(s-1)(M+k+1)^{(s-1)}M^{(s-1)} }    \\
		=&\;[g_M^{i}(X)  ]_k .		
	\end{align*}
	
\end{proof}

\begin{proof}[Proof of \eqref{eq:prop_finite_defect_c}]
	We show that $ \left|\tfrac{\partial}{\partial c} F_{\infty}(x) \cdot \pi_c(h)  \right|_k 
	\leq[ g_M^{ii}(X) ]_k $ for $
	1 \leq  k \leq M   $ and all $ x \in X$ and $h \in H$. 
	Let $(\alpha , \omega , c) \in X$ and  $ h \in \pi_c(H)$.  
From \eqref{eq:dFdC} we calculate $ \tfrac{\partial}{\partial c} (F(X)-F_M(X)) \cdot h $ below:  
	\begin{align*}
	\tfrac{\partial}{\partial c} (F(x)-F(\pi_M'x)) \cdot h  =&  \left(  (U_\omega h) * c + (U_\omega c)*h \right)  - \left( (U_\omega h) * c_M + (U_\omega c_M)*h \right)  \\
	=&   (U_\omega h) * (c-c_M) + (U_\omega (c-c_M) )*h  .
	\end{align*}
	Since $ c-c_M \in H_\infty$, it follows that: 
	\begin{equation*}
		| \tfrac{\partial}{\partial c} [F(x)-F(\pi'_Mx)] \cdot h |_k 
		\leq  \sup_{h \in H , h' \in H_{\infty}} 2 \cdot |h  * h'_{\infty} |_k.
	\end{equation*}
For $h \in H$ and $ h' \in H_\infty$  and 	for $ 1 \leq k \leq M$, we calculate $h*_k h'$ below, using the property that  $[h']_j=0$ for $j \leq M$. 
	\begin{align*}
	 h*_k h' &=  
	\sum_{j=1}^\infty \, [h^*]_j  [h']_{k+j} + [h]_{k+j} [h'{}^{*}]_j \\
	&= 	\sum_{j=M-k+1}^{M}  [h^*]_{j} [h']_{ k+j} + 
	 \sum_{j=M+1}^{\infty }  [h^*]_{j} [h']_{k+j} + [h]_{k+j} [h'{}^{*}]_j  .
	\end{align*}
	By  applying the estimates  $|h_j| \leq |H|_j$ for $j \leq M$, and  $ |h|_j,|h'|_j \leq C_0 / j^s$ for $ j \geq M+1$, we  obtain the following: 
	\begin{align*}
	\left|	  \tfrac{\partial}{\partial c } F_\infty(x) \cdot h \right|_k 	
	&\leq 2
	\left(
	 \sum_{j=M-k+1}^{M} |H|_j \frac{C_0}{(j+k)^s} + 
	\sum_{j=M+1}^{\infty } \frac{2 C_0^2 }{j^s (j+k)^s}  \right)\\
	&\leq 
	2C_0  	\sum_{j=M-k+1}^{M}  \frac{|H|_{j}}{(j+k)^s}  \; +\; \frac{4 C_0^2 }{(s-1)(M+k+1)^s M^{s-1}} \\
		&= [g_M^{ii}(X)]_k.
	\end{align*}
\end{proof}

\begin{proof}[Proof of \eqref{eq:prop_F_infty_center}]
	We show that $\left| 		F_\infty( \bar{\alpha}, \bar{\omega} , \bar{c})  \right|_k  \leq	\frac{1}{ k^s} g_\infty^i(X) 	$ for $	
	M + 1 \leq  k  $. 	
	Since $ \pi'_M (\bar{x}) = \bar{x}$ and $  [ \bar{c}]_k =0$ for $ k \geq M+1$, it follows that:
	\begin{equation}
	[	F_\infty ( \bar{\alpha}, \bar{\omega} , \bar{c})]_k = 
	\begin{cases}
	0  & \mbox{ if } k \leq M \\
	\sum_{j=1}^{k-1} e^{-i \omega j } \bar{c}_j \bar{c}_{k-j}  		& \mbox{ otherwise.}
	\end{cases}
	\label{eq:F_infty_center}
	\end{equation}
	As $ \bar{c}_j\bar{c}_{k-j}=0$ when either  $ j > M$ or $ k - j >M$, then it follows that:  
	\begin{align*}
|F_\infty ( \bar{\alpha}, \bar{\omega} , \bar{c}) |_k \leq \sum_{j=k-M}^{M} |\bar{c}_j \bar{c}_{k-j}|  .
	\end{align*}
	Noting that $ |F_\infty ( \bar{\alpha}, \bar{\omega} , \bar{c}) |_k=0 $ for $ k > 2 M$, we calculate: 
	\begin{align*}
		|F_\infty ( \bar{\alpha}, \bar{\omega} , \bar{c}) |_k &\leq  k^{-s}
		\max_{ M+1 \leq k_0 \leq 2M } k_0^{s} \sum_{j=k_0-M}^{M}   |\bar{c}_j \bar{c}_{k_0-j}| \\
		&= k^{-s}  g_\infty^i(X) .
	\end{align*} 	
\end{proof}

\begin{proof}[Proof of \eqref{eq:prop_giiB}]
	We show that $\left|h* c \right|_k   \leq \frac{1}{k^s}  g_\infty^{ii,a}(X) $ for $	M + 1 \leq  k  $ and all $ c \in \pi_c (X)$ and $h \in \pi_c (H)$. 
 	Fix $x=(\alpha , \omega , c) \in X$ and  $  h  \in \pi_c(H)$, and write $c_M = \pi_M(c), c_\infty= \pi _\infty(c), h_M = \pi_M(h)$, and $h_\infty = \pi_\infty(h)$.   
	We may expand $ h*c$ as follows: 
	\begin{equation}
		h*c = h_M * c_M + h_M * c_\infty + c_M * h_\infty + h_\infty*c_\infty.
		\label{eq:H*X}
	\end{equation}
	The composition $ h_M * c_M $ only has non-zero components for $ M+1 \leq k \leq 2M $, thereby it is bounded by the computable value below: 
	\begin{align}
		h_M *_k c_M &\leq \tfrac{1}{k^s}  \max \{ k_0^s \cdot h_M *_{k_0} c_M : M+1 \leq k_0 \leq 2 M \} \nonumber \\ 
&\leq		\frac{1}{k^s}   \max_{M+1 \leq k_0 \leq 2 M}  k_0^s  \sum_{j=k_0-m}^M |H|_j |X|_{k_0-j}   .
		 	\label{eq:HM*XM}
	\end{align}
	We calculate $c_M  * h_\infty$ for $ k \geq M+1$, noting that $ [h_\infty]_{k-j}=0$ if $ k-j \leq M$, as below:
	\begin{align*}
			c_M 	 *_k h_\infty  =& \;
			  \sum_{j=1}^{k-1} [c_M ]_{j} [h_\infty]_{k-j}  
			+ \sum_{j=1}^{\infty} [c_M^* ]_{j} [h_\infty]_{k+j}
			+ [c_M]_{k+j} [h_\infty^*]_{j} 
			\\
			=&	\sum_{j=k-M-1}^{M} [c_M]_{j} [h_\infty]_{k-j}  
			+ \sum_{j=1}^{M} [c_M^*]_{j} [h_\infty]_{k+j} 
	\end{align*}
	Using the estimates $ |c_j| \leq |X|_j$ for $j \leq M$ and $ |h_j| \leq C_0 / j^s$ for $ j \geq M+1$, we calculate the following: 	
	\begin{align}
	|c_M  * h_\infty |_k \leq&	
	\sum_{j=k-M-1}^M | X|_j  \frac{C_0}{(k-j)^s}
	+ \sum_{j=1}^{M} | X|_j \frac{C_0}{(k+j)^s} 
	\nonumber 
	\\
	\leq&  \frac{C_0}{k^s}  \left( 	
	\sum_{j=k-M-1}^{M} | X|_j \left( \frac{k}{k-j} \right)^s 
	+ \sum_{j=1}^M | X|_j   \right) .
	\label{eq:SumWeird}
	\end{align} 
	Note that $\tfrac{k}{k-j}$ is decreasing with $k$.
	To maximize the coefficient of $ | X|_j $ in the first sum of \eqref{eq:SumWeird}, we choose the smallest $k$ such that $ j \leq k - M -1$. 
	Hence, for each coefficient, we choose $k=M+j+1$ as an upper bound.  
 We obtain the following:
\begin{equation}
\label{eq:XM*Hinfty}
		|c_M * h_\infty|_k \leq \frac{C_0}{k^s}   \sum_{j=1}^M |X|_j  \left(  \left( \frac{M+j+1}{M+1} \right)^s +1 \right) .
\end{equation}
	An analogous calculation produces a  bound for $ | h_M * c_\infty|$ as given below:	
\begin{equation}
\label{eq:HM*Cinfty}
|h_M * c_\infty|_k \leq \frac{C_0}{k^s}   \sum_{j=1}^M |H|_j  \left(  \left( \frac{M+j+1}{M+1} \right)^s +1 \right) .
\end{equation}
	Lastly we estimate $|h_\infty * c_\infty|_k$.  
	For  $h_\infty,c_\infty \in H_\infty$ and $ k \geq M+1$ we calculate:
	\begin{align*}
	h_\infty * c_\infty 
	&= \sum_{j=1}^{k-1} [h_\infty]_j [c_\infty ]_{k-j} 
	+ 	\sum_{j=1}^\infty [h_\infty^*]_j  [c_\infty ]_{k+j} 
						+ [h_\infty]_{k+j} [c_\infty^{*}]_j \\
	&= \sum_{j=M+1}^{k-M-1} [h_\infty]_j [c_\infty ]_{k-j}  + 
		\sum_{j=M+1}^\infty[h_\infty^*]_j  [c_\infty ]_{k+j} 
		+ [h_\infty]_{k+j} [c_\infty^{*}]_j . 
	\end{align*}
Taking norms and using the estimate $ |h_j| \leq C_0/j^s$ for $M+1 \leq j$ we obtain:  
	\begin{align*}
	|h_\infty * c_\infty|_k 
	&\leq  \sum_{j=M+1}^{k-M-1} \frac{C_0^2}{j^s(k-j)^s} + 2  \sum_{j=M+1}^{\infty} \frac{C_0^2}{j^s(k+j)^s} \\
	&\leq C_0^2  \left( \sum_{j=M+1}^{k-M-1} \frac{1}{j^s(k-j)^s} \right)  +   \frac{2}{k^s} \frac{C_0^2}{(s-1)M^{s-1}}  .
	\end{align*}
The remaining sum is only nonzero for $ k \geq 2 (M+1)$, and we bound it as follows: 
	\begin{align*}
		\sum_{j=M+1}^{k-M-1} \frac{1}{j^s(k-j)^s}  
		&= \frac{1}{k^s} \sum_{j=M+1}^{k-M-1} \left( \frac{1}{j} + \frac{1}{k-j} \right)^s \\
		&\leq \frac{2}{k^s} \sum_{j=M+1}^{k/2} \left( \frac{2}{j}  \right)^s \\
		&\leq    \frac{2^{s+1}}{k^s(s-1)} \left(\frac{1}{M^{s-1}} - \frac{1}{(k/2)^{s-1}} \right).
	\end{align*}
	This estimate is maximized in the $ \| \cdot \|_s$ norm  by taking $k \to \infty$. 
	Thereby, we obtain the following estimate:
	\begin{equation}
	\label{eq:Hinfty*Hinfty}
			\left| h_\infty * c_\infty \right|_k \leq  \frac{1}{k^s} \frac{2 C_0^2 (2^s+1)}{(s-1)M^{s-1}}.
	\end{equation}
	By combining the results from (\ref{eq:HM*XM}~-~\ref{eq:Hinfty*Hinfty})  into \eqref{eq:H*X}, it follows that  if $	M + 1 \leq  k  $, then $\left|h* c \right|_k   \leq \frac{1}{k^s}  g_\infty^{ii,a}(X) $.
	
\end{proof}

\begin{proof}[Proof of \eqref{eq:prop_giiC}]
	We show that $ \left|(\K^{-1} \pi_c(x_1)) * \pi_c(x_2) \right|_k  \leq \frac{1}{k^{s-1}} g_\infty^{ii,b}(X) $ for $
	M + 1 \leq  k $ and all $ x_1,x_2 \in X$. 
	For $ i=1,2$ let us fix $c_i  \in \pi_c(X)$ and recall that $ C_1 \geq \| c_i \|_s$  by Definition \ref{def:Gfunctions}. 
	We can write $  (\K^{-1} c_1 )*_k c_2 $ as below:
	\begin{align}
		(\K^{-1} c_1 )*_k c_2
		&= 
		\sum_{j = 1 }^{k-1}  j [c_1]_{j} [c_2]_{k-j}  + 	
		\sum_{j = k+1 }^{\infty}  j [c_1^*]_{j} [c_2]_{k+j} + (k+j) [c_1]_{k+j} [c_2^*]_j .
	\end{align}
	Using $ |c|_j \leq C_1 / j^s$ and $ |c|_{k+j} \leq C_0 / (k+j)^s$ for $ k \geq M+1$, we obtain a bound on $ | (\K^{-1} c_1 )* c_2|_k$ as below: 
	\begin{align*}
	 | (\K^{-1} c_1 )* c_2|_k
	&\leq 
	\sum_{j = 1 }^{k-1} \frac{j C_1 C_1}{j^{s}(k-j)^s}  + 	
		\sum_{j = 1}^{\infty}  \frac{ C_1 C_0}{j^{s-1}(k+j)^s} +  \sum_{j = 1}^{\infty} \frac{C_0 C_1}{(k+j)^{s-1}j^{s}} \\	
	&\leq 
	C_1^2 \left(\sum_{j = 1 }^{k-1} \frac{1 }{j^{s-1}(k-j)^s} \right)
	+ 	
	\frac{C_1 C_0}{(k+1)^s}\left( 1 + \frac{1}{s-2}\right)  
	+ 	
	\frac{C_1 C_0}{(k+1)^{s-1}}\left( 1 + \frac{1}{s-1}\right).
	\end{align*}
	Since $ 5 \leq M$, thereby  $6 \leq M+1 \leq k$ and by 
 Lemma \ref{prop:DiscreteConv} we can simplify the remaining sum as follows: 
	\begin{align*}
	\sum_{j =1}^{k-1} \frac{1}{j^{s-1} (k-j)^s} = \frac{k}{2}
	\sum_{j =1}^{k-1} \frac{1}{j^{s} (k-j)^s} \leq \frac{k}{2} \frac{\gamma_k}{k^s} \leq    \frac{\gamma_{M+1}}{2 k^{s-1}}.
	\end{align*}
	Taking $ k \geq M+1$, it follows that:  
	\begin{align*}
		|(\K^{-1} \pi_c(x_1)) * \pi_c(x_2)|_k 
		&\leq 
		\frac{1}{k^{s-1}} \left( 
		\frac{C_1^2 \gamma_{M+1}}{2} 
		+ C_1C_0 \left( \frac{s-1}{(M+2)(s-2)} + \frac{s}{s-1} \right)  \right) \\
		&= \frac{1}{k^{s-1}} g_\infty^{ii,b}(X) .
	\end{align*}
\end{proof}

%%%%%%%%%%%%%%%%%%%%%%%%%%%%%%%%%%%%%%%%%%%%%%%
%%%%%%%		Krawczyk Operator			%%%%%%%
%%%%%%%%%%%%%%%%%%%%%%%%%%%%%%%%%%%%%%%%%%%%%%%

\section{Bounding the Krawczyk Operator}
\label{sec:Krawczyk}

When defining a Krawczyk operator $K(X,\bar{x})$ for a function $f: Y\to Z$ one must choose a linear operator $ A^\dagger : Z \to Y$. 
The map $A^\dagger$ is  typically chosen to approximate $Df(\bar{x})^{-1}$.  
 Even in finite dimensions it may be impossible to exactly calculate the inverse of a matrix using floating point arithmetic. 
To denote a fixed but numerically approximate definition, we introduce the notation $: \approx$.  
Since we set up our theorems in an \emph{a posteriori} format, the question of whether our numerical approximation is sufficiently accurate is answered by whether our computer-assisted proof is successful or not.

As with any method relying on a contraction mapping argument, the Krawczyk operator is only truly effective in locating the zeros of a function if they are isolated.  
Since the non-trivial zeros of $F$ are not isolated, and in fact form a 2-manifold \cite{regala1989periodic}, we do not define a Krawczyk operator corresponding  directly to $F:\R ^ 2 \times \oo^s \to \oo^{s-1}$. 
We must first reduce the dimensionality of its domain by two. 

We  reduce one of the dimensions by imposing a phase condition; we may assume without loss of generality that the first Fourier coefficient is a positive real number (see Proposition \ref{prop:TimeTranslation}).  
To that end, we define a codimension$-1$ subspace $ \tilde{\oo}^s \subseteq \oo^s$ as follows: 
\[
 \tilde{\oo}^s := \{ c \in \oo^s : c_1=c_1^*\}.
\]
To reduce the other dimension, we consider $\alpha$ as a parameter and perform our estimates uniformly in $\alpha$.

For a cube $ X \subseteq \R ^ 2 \times \tilde{\oo}^s$  we define a Krawczyk operator to find the zeros of functions  $ F_\alpha :  \R^1 \times \tilde{ \oo}^{s} \to \oo^{s-1}$ for all
 $\alpha \in \pi_{\alpha}(X). $
To that end, we would like to define a map $ A^\dagger $ to be an approximate inverse of the derivative $ DF_{\bar{\alpha}}(\bar{\omega},\bar{c}) \in \cL( \R ^1 \times \tilde{\oo}^s,\oo^{s-1})$ for some $ ( \bar{\alpha} , \bar{\omega} , \bar{c}) \in X$.  
We construct this approximate inverse by combining $A^\dagger_M$, a $2M \times 2M$ real matrix on the lower Fourier modes, with the operator $ - ( i\tfrac{\bar{\alpha}}{\bar{\omega}}) \cK \pi_\infty'$ on the higher Fourier  modes. 

As is ever the case, we may only explicitly perform a finite number of operations on  fundamentally finite dimensional objects, 
and because of this we defined Galerkin projections in \eqref{eq:Galerkin1} and \eqref{eq:Galerkin2}. 
To ensure the sum $F = F_M + F_\infty$ makes sense, the maps $ \pi_M, \pi_M'$ are defined to be but finite rank maps onto a subspace of an infinite dimensional Banach space. 
To emphasize this finite dimensional subspace as a space in its own right, as well as the new domain $\R ^1 \times \tilde{\oo}^s $, we define the following projection and inclusion maps: 
\begin{align*}
	\tilde{\pi}_M  	&: \oo^s \twoheadrightarrow \R ^ {2M} ,& 
	\tilde{\pi}_M' 	&:\R ^1 \times  \tilde{\oo}^s \twoheadrightarrow \R ^ {2M} 	,&  
	\tilde{i}_M		&:   \R ^ {2M} \hookrightarrow  \oo^s ,& 
	\tilde{i}_M'	&: \R ^ {2M} \hookrightarrow \R ^1 \times \tilde{ \oo}^s.
\end{align*}
\begin{align*}
\tilde{\pi}_M \circ \tilde{i}_M 	&= id_{\R ^{2M}}, & 
\tilde{\pi}_M' \circ \tilde{i}_M'	&= id_{\R ^{2M}} ,&  
\tilde{i}_M \circ \tilde{\pi}_M 	&= id_{\oo^s}  ,& 
\tilde{i}_M' \circ \tilde{\pi}_M'	&= id_{\R ^1 \times \tilde{\oo^s}}.
\end{align*}
We  define the linear operator $A^\dagger$ below in Definition \ref{def:Adagger} as follows:
We note that $A^\dagger$ will be injective if the $2M\times 2M$ matrix $A^{\dagger}_M$ has rank $2M$. 

\begin{definition} 
	\label{def:Adagger}
	Fix a cube $X \subseteq \R^2  \times \tilde{\oo}^{s}$.  
	For a point $(\bar{\alpha},\bar{\omega},\bar{c}) =  \bar{x}    \in X$ such that $ \bar{x} = \pi'_M(\bar{x} )$, define the following linear operators: 
	\begin{align*}
	A_M 
		&:\approx \, \tilde{\pi}_M \circ D     F_{\bar{\alpha}} (\bar{\omega},\bar{c}) \circ \tilde{i}_M' 
 		&A_M &\in \mathcal{L}(\R^{2M},\R^{2M})
 	\\
	A_M^\dagger 
		& :\approx A_M^{-1}
		&A_M^\dagger &\in \mathcal{L}(\R^{2M},\R^{2M}) 
	\\
	A(\bar{x},M) 
		&:= \tilde{i}_M \circ A_M \circ \tilde{\pi}'_M + i \tfrac{\bar{\omega}}{\bar{\alpha}} \K^{-1} \pi'_{\infty} 
		&A(\bar{x},M)  & \in  \mathcal{L}(\R^1 \times \tilde{\oo}^{s} , \oo^{s-1} ) 
	\\  
	A^{\dagger}(\bar{x},M)   
		&:= \tilde{i}'_M \circ A^{\dagger}_M \circ \tilde{\pi}_M - i \tfrac{\bar{\alpha}}{\bar{\omega}}   \K \pi_{\infty} 
		&A^{\dagger}(\bar{x},M)  & \in  \mathcal{L}( \oo^{s-1}, \R^1 \times \tilde{\oo}^{s}  ) .
 	\end{align*}
\end{definition}

While a Krawczyk operator $K(X,\bar{x})$ given as in Definition \ref{def:Krawczyk} is sufficient from a mathematical perspective, from a computational perspective it leaves something to be desired. 
We address this deficiency in Definition \ref{def:KrawczykApprox} by defining an explicitly computable operator $ K'(X,\bar{x})$ as  an outer approximation to $K(X,\bar{x})$, which is to say that  $K(X,\bar{x}) \subseteq K'(X,\bar{x})$. 
In Theorem \ref{prop:K_Inclusion} we prove this, and in Theorem \ref{prop:KrawczykOuterApprox} we give an analogue of Theorem \ref{prop:Krawczyk}.

In practice, use \emph{interval arithmetic}  \cite{moore2009introduction}  to compute an outer approximations for the arithmetic combination of sets (e.g. $A + B = \bigcup_{a\in A, b\in B} a + b$).
 This allows us to bound the image of functions over rectangular domains,  which is to say domains  given as the product of intervals. 
By employing outward rounding,  interval arithmetic can be rigorously implemented on a computer \cite{rump1999intlab}. 
 In every step an outer approximation is constructed as a rectangular domain, and the end result will  too be an outer approximation. 
While obtaining a tight approximation is desirable, it is not required; as long as we have an outer approximation, that is sufficient.

\begin{definition}
	\label{def:KrawczykApprox}

	Fix  a cube  $X\subseteq \R^2 \times \tilde{\oo}^s$ as in Definition \ref{def:cube} with $M \geq 5$, $s >2$ and $C_0 >0$. 
	Fix some $ \bar{x} = ( \bar{\alpha} , \bar{\omega},\bar{c}) \in X$ such that $\bar{x} = \pi'_M(\bar{x})$ and $ \Delta_{\omega} \geq \sup_{x \in X}  | \pi_\omega(x)-\bar{\omega}| $. 
	Fix $ A := A(\bar{x},M)$ and 
	$ A^{\dagger } := A^\dagger(\bar{x},M)$ as in Definition \ref{def:Adagger}.   
	Define the following functions:  
	\begin{align}
	g_\infty^{ii}(X) := & \frac{2 \bar{\alpha}}{\bar{\omega} (M+1)} g_\infty^{ii,a}(X)  +
	\sup_{\alpha \in \pi_\alpha(X)}
	\Delta_\omega
	\tfrac{  \bar{\alpha }}{\bar{\omega}}  \left(  ( \alpha^{-1} +1) C_0 + g_\infty^{ii,b}(X)  \right) \nonumber \\
	%%%%
	&\;+  \sup_{\alpha \in \pi_\alpha(X),\omega \in \pi_\omega(X)}
	\left( |1- \tfrac{\bar{\alpha}}{\alpha} \tfrac{\omega}{\bar{\omega}} | + \frac{\bar{\alpha}}{\bar{\omega} ( M+1)} \right) C_0 
	 \label{eq:giiInfty}  \\
	g_M(X)  := & g_M^{i}(X)  +  g_M^{ii}(X) \\
 	g_\infty(X)  := & \tfrac{\bar{\alpha}/\bar{\omega}}{M+1} g_\infty^{i}(X)  +  g_\infty^{ii}(X) .
	\end{align}
	Define $	K'(X,\bar{x}) :=  K'_M(X,\bar{x})  \times K'_\infty(X,\bar{x})$ by:  
	\begin{align}
	K'_M(X,\bar{x})  :=& \, \bar{x} - A_M^{\dagger} F_M( \bar{x}) +
	(I_M - A_M^{\dagger}A_M) \cdot \pi_M' (X-\bar{x}) \nonumber 
	\\
	& + 		A_M^{\dagger}( A_M- DF_M(X))(X-\bar{x}) 
		\pm A_M^\dagger  g_M(X) 
	\label{eq:K'M}
	\\
	K'_\infty(X,\bar{x}) :=&	\left\{ c_k  \in \C : |c_k| < g_\infty(X) /k^s  \right\}_{k=M+1}^\infty, 
	\end{align}	
	where $F_M(\bar{x}) \subseteq \R^{2M}$ is calculated to include the image of $F_M(\bar{x})$ for all $ \alpha \in \pi_\alpha (X)$, 
	where $DF_M(X) \subseteq \cL(\R^{2M},\R^{2M}) $ is  calculated to include the image of  ${\tilde{\pi}}_M \circ DF_\alpha(\omega,c) \circ \tilde{i}_M'$ for all $(\alpha,\omega,c) \in X$, 
 	and where $ \pm  A^\dagger_M g_M(X) \subseteq \R^{2M}$ is calculated to be a set satisfying: 
	\begin{equation*}
	\bigcup_{|v|_k \leq |g_M(X)|_k} A_M^\dagger \cdot v \subseteq \pm A^\dagger_M  g_M(X).
	\end{equation*}
\end{definition}

\begin{theorem}
	\label{prop:K_Inclusion}
	Fix a cube $X$ as in Definition \ref{def:cube} with $M \geq 5$, $s >2$ and $C_0>0$.  
	Fix a point $ \bar{x}   \in X$ such that $\bar{x} = \pi'_M(\bar{x} )$, and fix $ A := A(\bar{x},M)$, 
	$ A^{\dagger } := A^\dagger(\bar{x},M)$ as in Definition \ref{def:Adagger}.  
	Fix some $ \alpha \in \pi_\alpha(X)$, and  for  $ f \equiv F_\alpha :\R ^1 \times \tilde{\oo}^s \to \oo^{s-1}$ let $K$ be given as in Definition \ref{def:Krawczyk}. 
	Then   $ K(X,\bar{x} ) \subseteq K'(X,\bar{x}).$	
\end{theorem}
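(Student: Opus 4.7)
The plan is to split $K(X,\bar{x})$ according to the decomposition $\tilde\oo^s = \pi_M(\tilde\oo^s)\oplus\pi_\infty(\tilde\oo^s)$ and verify separately that $\tilde\pi'_M K(X,\bar{x})\subseteq K'_M(X,\bar{x})$ and $\pi'_\infty K(X,\bar{x})\subseteq K'_\infty(X,\bar{x})$. The two key algebraic observations are that, by Definition \ref{def:Adagger} and because $\tilde\pi_M\pi_\infty = 0$ and $\tilde{i}_M'\circ\pi_\infty=0$, we have the clean splittings $\tilde\pi'_M A^\dagger = A_M^\dagger\tilde\pi_M$ and $\pi'_\infty A^\dagger = -i(\bar\alpha/\bar\omega)\K\pi_\infty$; and that since $\bar{x}=\pi'_M\bar{x}$ we have $\tilde\pi_M F_\infty(\bar{x}) = 0$ and $\pi_\infty F_\alpha(\bar{x})=F_\infty(\bar{x})$.

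For the finite part, substituting $F_\alpha = F_M + F_\infty$ and the above identities gives
\[
\tilde\pi'_M K(X,\bar{x}) \subseteq \bar{x} - A_M^\dagger F_M(\bar{x}) + \tilde\pi'_M\bigl(I - A^\dagger Df(X)\bigr)(X-\bar{x}).
\]
Decomposing $Df = DF_M + DF_\infty$ and noting that $F_M$ factors through $\pi'_M$, the $DF_M$ contribution becomes $(I_M - A_M^\dagger A_M)\pi'_M(X-\bar{x}) + A_M^\dagger(A_M-DF_M(X))(X-\bar{x})$, which is exactly the first two set-terms in \eqref{eq:K'M}. The $DF_\infty$ contribution, namely $A_M^\dagger\tilde\pi_M DF_\infty(x)h$ for $x\in X$, $h\in X-\bar{x}$, splits into $\omega$-derivative and $c$-derivative pieces bounded component-wise by $g_M^i(X)$ and $g_M^{ii}(X)$ via parts \eqref{eq:prop_finite_defect_w} and \eqref{eq:prop_finite_defect_c} of Lemma \ref{prop:CentralLemma}; these fall inside the $\pm A_M^\dagger g_M(X)$ slack term.

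For the infinite part, the identity $\pi'_\infty A^\dagger = -i(\bar\alpha/\bar\omega)\K\pi_\infty$ reduces the proof to bounding, for each $k\geq M+1$, the three quantities $|\K F_\infty(\bar{x})|_k$, $|\pi_\infty h_c + i(\bar\alpha/\bar\omega)\K\pi_\infty(\partial_c F)(x)h_c|_k$, and $|(\bar\alpha/\bar\omega)\K\pi_\infty(\partial_\omega F)(x)h_\omega|_k$. The first is handled by \eqref{eq:prop_F_infty_center} together with $1/k\leq 1/(M+1)$, yielding the term $(\bar\alpha/\bar\omega)(M+1)^{-1}g_\infty^i(X)$ of $g_\infty(X)$. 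For the second, I would use the formulas in Proposition \ref{prop:Frechet}: the linear parts $(i\omega/\alpha)\K^{-1}h_c$ and $U_\omega h_c$ combine with $\pi_\infty h_c$ to produce the telescoped coefficient $|1-\bar\alpha\omega/(\bar\omega\alpha)|+\bar\alpha/(\bar\omega(M+1))$ on $C_0/k^s$, while the convolutions $(U_\omega c)*h_c$ and $(U_\omega h_c)*c$ are bounded by \eqref{eq:prop_giiB}, picking up an extra $1/(M+1)$ from $\K$. For the third, the linear piece of $\partial_\omega F$ gives $(\bar\alpha/\bar\omega)\Delta_\omega(\alpha^{-1}+1)C_0/k^s$, and the nonlinear piece $(\K^{-1}U_\omega c)*c$ is bounded by \eqref{eq:prop_giiC}, contributing $(\bar\alpha/\bar\omega)\Delta_\omega g_\infty^{ii,b}(X)/k^s$. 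Summing these reproduces $g_\infty^{ii}(X)/k^s$ exactly.

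The main obstacle is not any single inequality — all the hard analytic work was already done in Lemma \ref{prop:CentralLemma} — but rather the bookkeeping: every term in the definitions of $g_\infty(X)$ and $g_M(X)$ must be matched to a specific algebraic piece coming from expanding $Df(x)h = h_\omega(\partial_\omega F)(x) + (\partial_c F)(x)h_c$, and in particular the cancellation that produces the factor $|1-\bar\alpha\omega/(\bar\omega\alpha)|$ must be traced carefully across the combination of $\pi_\infty h_c$ with the linear part of $A^\dagger(\partial_c F)(x)h_c$. Once these identifications are verified, set-inclusion for each component follows term by term.
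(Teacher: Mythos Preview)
Your proposal is correct and follows essentially the same route as the paper: split into the $\pi_M'$ and $\pi_\infty'$ components, use the block structure of $A^\dagger$ together with $\pi_M F_\infty(\bar{x})=0$, and then invoke exactly the parts of Lemma~\ref{prop:CentralLemma} you cite (\eqref{eq:prop_finite_defect_w}--\eqref{eq:prop_finite_defect_c} for the finite block, \eqref{eq:prop_F_infty_center}, \eqref{eq:prop_giiB}, \eqref{eq:prop_giiC} for the tail) to match each term of $g_M(X)$ and $g_\infty(X)$. The only cosmetic difference is that the paper organizes the finite-block computation via ``add and subtract $A^\dagger A$'' rather than your direct $DF=DF_M+DF_\infty$ split, but the resulting terms are identical.
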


\begin{proof}
	Let $H := X - \bar{x}$.  
	We begin by proving that $\pi_M'  (K(X,\bar{x})) \subseteq  \pi_M' ( K'(X,\bar{x}))$, first showing that: 
	\begin{align}
	\pi_{M}' \circ	 (I- A^\dagger  DF (X)) \cdot H 
	\subseteq &
	K'_M(X,\bar{x}) -\left(\bar{x} - A_M^{\dagger} F_M( \bar{x}) \right). 
	\label{eq:prop:K_M_inclusion}
	\end{align}
	Fix some $ x \in X$ and $h = (h_\omega,h_c) \in H$. 
	We start by adding and subtracting $ A^\dagger A$, rewriting the LHS of \eqref{eq:prop:K_M_inclusion} as follows: 
	\begin{align*}
	\pi_{M}' 	 (I- A^\dagger  DF (x)) \cdot h
	=& 
	(I_M-A^\dagger_M A_M)\cdot 	\pi_{M}' (h)  
	+	\pi_{M}' 	 A^\dagger (A-  DF (x)) \cdot h \\
	=& 
	(I_M-A^\dagger_M A_M)\cdot 	\pi_{M}' (h)  \\
	&+		A_M^{\dagger}( A_M- DF_M(x)) \cdot \pi_{M}' (h) + A^\dagger_M  \pi_M  DF_\infty (x) \cdot \pi_{M}' (h).
	\end{align*}
	By \eqref{eq:prop_finite_defect_w} and \eqref{eq:prop_finite_defect_c} it follows that $ | \pi_M D F_\infty (x) \cdot h |_k \leq [g_M^i (X) + g_M^{ii}(X)]_k $.  
	Thereby, it follows that:
	 $ 	A_M^\dagger   \pi_M D F_\infty (x) \cdot h  \subseteq \pm |A_M^\dagger |   \cdot  g_M(X)$ for all $x \in X$ and $h\in H$.
	Hence from the definition of $ K'(X,\bar{x}) $ given in  \eqref{eq:K'M}, then \eqref{eq:prop:K_M_inclusion} follows. 
	From \eqref{eq:F_infty_center} we have that $ \pi_M F_\infty(\bar{x}) =0$, hence $ \pi_M'  ( \bar{x } - A^\dagger F(\bar{x})) = \bar{x} -A_M^\dagger F_M(\bar{x})$. 
It then follows that $ \pi_M \circ K(X,\bar{x}) \subseteq K'_M(X,\bar{x})$.		
	\newline
	
We now prove that $\pi_\infty'(K(X,\bar{x})) \subseteq \pi_\infty'(K'(X,\bar{x}))$, first showing that: 
	\begin{align}
	\left\| \pi_{\infty}' \circ 	 (I- A^\dagger  DF (X)) \cdot (X - \bar{x})  \right\|_s   \leq& g_\infty^{ii}(X). 
	\label{eq:prop:F_infty_tail}
	\end{align}	
	Fix some $ x = (\alpha ,\omega,c) \in X$ and $h = (h_\omega,h_c) \in H$. 
	We start by adding and subtracting $ A^\dagger A$, rewriting the LHS of \eqref{eq:prop:F_infty_tail} as follows: 
	\begin{align*}
 	\pi'_{\infty} 	 (I- A^\dagger  DF (x)) \cdot h     
	 =&  \;  \pi'_{\infty} 	 (I- A^\dagger A )\cdot h  +   \pi'_{\infty} A^\dagger  (A- DF (x)) \cdot h  \\
	=&\;  \pi'_\infty \circ A^\dagger ( A - DF(x)) \cdot h \\
	=& \; 
	 \pi'_\infty \circ A^\dagger  \left(A- \tfrac{\partial}{\partial c} DF  (x) \right) \cdot h_c - 	 
	 \pi'_\infty \circ A^\dagger  \tfrac{\partial}{\partial \omega} DF  (x) \cdot h_\omega .
	\end{align*}
We calculate $- \pi_\infty A^\dagger \frac{\partial}{\partial \omega} F(x) \cdot h_\omega$ writing $\frac{\partial}{\partial \omega} F(x)$ as in \eqref{eq:dFdW} below:  
\begin{align*}
- 	\pi_\infty \circ A^\dagger  \tfrac{\partial}{\partial \omega} F (X)) \cdot h_\omega 
 =&
  -i \pi_{\infty}\frac{ \bar{\alpha} }{\bar{\omega}} \K \left( i \K^{-1} ( \alpha^{-1} I   -      U_{\omega}) c - i  ( \K^{-1} U_{\omega} c) * c \right) \cdot  h_\omega \\
=& h_\omega
 \frac{  \bar{\alpha }}{\bar{\omega}}  \pi_{\infty} \left(  ( \alpha^{-1} I   -      U_{\omega}) c -  \cK  (\K^{-1} U_{\omega} c) * c \right) .
\end{align*}	
Using $|c|_j \leq C_0 /j^s$ and \eqref{eq:prop_giiC} we obtain for $k \geq M+1$ that: 
\begin{align}
\left|	\pi_\infty \circ A^\dagger  \tfrac{\partial}{\partial \omega} F(x)) \cdot \Delta_\omega  \right|_k
\leq&
\Delta_\omega
\frac{  \bar{\alpha }}{\bar{\omega}}  \left(  ( \alpha^{-1} +1) \frac{C_0}{k^s} + \frac{1}{k} \frac{g_\infty^{ii,b}(X)}{k^{s-1}} \right) 
\nonumber
\\
%%%%%%
\left\|	\pi_\infty \circ A^\dagger  \tfrac{\partial}{\partial \omega} F(x)) \cdot \Delta_\omega  \right\|_{s} 
\leq& 
\Delta_\omega
\frac{  \bar{\alpha }}{\bar{\omega}}  \left(  ( \alpha^{-1} +1) C_0 + g_\infty^{ii,b}(X)  \right).
\label{eq:dKdW}
\end{align}	
For $(\alpha , \omega, c ) \in X$  we calculate $ \pi_\infty A^\dagger( A-  \frac{\partial}{\partial c} F) \cdot h_c$ below:
\begin{align*} 
%%%%%%%%%%%%%
\pi_\infty \circ A^\dagger  (A- \tfrac{\partial}{\partial c} F (x)) \cdot h_c 
&= -i 	\frac{\bar{\alpha} }{\bar{\omega}} \K  
\left( 
\left( i\tfrac{ \bar{\omega}}{\bar{\alpha} }\K^{-1} - 
(i \tfrac{\omega}{\alpha} \K^{-1}  + U_\omega)
\right) h_c  -  (U_\omega h_c) * c - (U_\omega c) * h_c  \right) \nonumber \\
&= \pi_\infty \left( (1- \tfrac{\bar{\alpha}}{\alpha} \tfrac{\omega}{\bar{\omega}} ) I + i \tfrac{\bar{\alpha}}{\bar{\omega}} \K U_\omega \right) h_c - \pi_\infty i  \tfrac{\bar{\alpha}}{\bar{\omega}} \K \left( (U_\omega c) *h_c + (U_\omega h_c ) *c \right).
\end{align*}
	Taking norms and using \eqref{eq:prop_giiB} we obtain: 
	\begin{align}
\left\| \pi_\infty \circ A^\dagger  (A- \tfrac{\partial}{\partial c} F (x)) \cdot h_c  \right\|_s \leq& \left( |1- \tfrac{\bar{\alpha}}{\alpha} \tfrac{\omega}{\bar{\omega}} | + \frac{\bar{\alpha}}{\bar{\omega} ( M+1)} \right) C_0 + \frac{2 \bar{\alpha}}{\bar{\omega} (M+1)} g_\infty^{ii,a}(X). \label{eq:dKdC}
	\end{align}
	By combining \eqref{eq:dKdW} and \eqref{eq:dKdC} and taking a supremum over $\alpha$ and $\omega$, we obtain the definition of $g^{ii}_\infty$ in \eqref{eq:giiInfty}, whereby \eqref{eq:prop:F_infty_tail} follows.

	To show that $\pi_\infty K(X,\bar{x}) \subseteq K_\infty'(X,\bar{x})$ note that from \eqref{eq:prop_F_infty_center} it follows that: 
	\begin{align*}
		 \|\pi_\infty(\bar{x} - A^\dagger F(\bar{x} ) ) \|_s 
		 =  \| - i \tfrac{\bar{\alpha}}{\bar{\omega}} \cK  \pi_\infty F(\bar{x}) \|_s
		 \leq \frac{\bar{\alpha}/\bar{\omega}}{M+1}g_\infty^i( X).
	\end{align*}
	Expanding out $ \pi_\infty K(X,\bar{x})$, it  follows that:  
\begin{align*}
	\| 	\pi_\infty K(X,\bar{x}) \|_s 
	\leq  & \|\pi_\infty(\bar{x} - A^\dagger F(\bar{x} ) ) \|_s  + 
	\left\| \pi_{\infty} 	 (I- A  DF (X)) \cdot (X - \bar{x})  \right\|_s   \\
	\leq & \frac{\bar{\alpha}/\bar{\omega}}{M+1}g_\infty^i( X) + g_\infty^{ii}(X) = g_\infty(X).
\end{align*}
Thus $ 	\pi_\infty K(X,\bar{x}) \subseteq K_\infty'(X,\bar{x})$. 
Thus, we have proved both that $\pi_M'(K'(X,\bar{x}))  \subseteq  \pi_M'(K(X,\bar{x}))$ and    $\pi_\infty'(K'(X,\bar{x})) \subseteq \pi_\infty'(K(X,\bar{x}))$.  
	Hence it follows that  $ K(X,\bar{x}) \subseteq K'(X,\bar{x})$.   
	
\end{proof}

\begin{theorem}
	\label{prop:KrawczykOuterApprox}
	Fix a cube $X$ as in Definition \ref{def:cube} with $M \geq 5$, $s >2$ and $C_0>0$. 
	Fix a point $ \bar{x} \in X$ such that $ \bar{x} = \pi'_M(\bar{x} )$. 
	Let $K(X,\bar{x})$ and $K'(X,\bar{x})$ be given as in Definition \ref{def:Krawczyk} and \ref{def:KrawczykApprox} respectively. 
	If $ K'(X,\bar{x}) \subseteq X$,  and moreover $g_\infty(X) < C_0$ and:
	\[
	\tilde{\pi}_M' \left(  K'_M(X,\bar{x}) + A^\dagger_M F_M(\bar{x})\right) 
	\subseteq int(\tilde{\pi}_M'(X)) ,
	\] 
then for all  $\alpha \in \pi_\alpha(X)$ there exists a unique point $ \hat{x}_\alpha = ( \alpha, \hat{\omega}_\alpha , \hat{c}_\alpha)   \in X$ such that $ F(\hat{x}_\alpha ) =0$. 

\end{theorem}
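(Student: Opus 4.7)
The plan is to slice by $\alpha$ and apply Theorem~\ref{prop:Krawczyk}(iv) to each slice. First I would fix $\alpha \in \pi_\alpha(X)$ and let $\tilde X := \{(\omega, c) \in \R \times \tilde\oo^s : (\alpha, \omega, c) \in X\}$, which is independent of $\alpha$ by the product structure of the cube. Set $\tilde{\bar x} := (\bar\omega, \bar c)$, take $f := F_\alpha : \R \times \tilde\oo^s \to \oo^{s-1}$, and let $K_\alpha$ denote the Krawczyk operator from Definition~\ref{def:Krawczyk} for $f$ at $\tilde{\bar x}$ with linear operator $A^\dagger := A^\dagger(\bar x, M)$. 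The injectivity of $A^\dagger$ required by Definition~\ref{def:Krawczyk} holds because $A_M^\dagger$ has full rank $2M$ on the Galerkin block and $-i\tfrac{\bar\alpha}{\bar\omega}\K\pi_\infty$ is manifestly injective on the tail.

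The containment $K_\alpha(\tilde X, \tilde{\bar x}) \subseteq \tilde X$ follows directly from Theorem~\ref{prop:K_Inclusion} together with the hypothesis $K'(X, \bar x) \subseteq X$: one has $K_\alpha(\tilde X, \tilde{\bar x}) \subseteq K'(X, \bar x) \subseteq X$, and discarding the fixed $\alpha$-coordinate yields the inclusion in $\tilde X$.

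For the contraction hypothesis of Theorem~\ref{prop:Krawczyk}(iv), I would produce $\lambda := \max(\lambda_M, \lambda_\infty) < 1$ by treating the Galerkin and tail parts separately. For the tail, the estimate~\eqref{eq:prop:F_infty_tail} from the proof of Theorem~\ref{prop:K_Inclusion} already gives $\|\pi_\infty'(I - A^\dagger Df(\tilde X))(\tilde X - \tilde{\bar x})\|_s \leq g_\infty^{ii}(X)$, while $\pi_\infty'(\tilde X - \tilde{\bar x}) = X_\infty$ has tail bound $C_0$; since $g_\infty^{ii}(X) \leq g_\infty(X) < C_0$ by the second hypothesis, the componentwise ratio $\lambda_\infty := g_\infty^{ii}(X)/C_0$ is strictly less than one. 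For the Galerkin part, the inclusion~\eqref{eq:prop:K_M_inclusion} shows that $\pi_M'((I - A^\dagger Df(\tilde X))(\tilde X - \tilde{\bar x})) \subseteq K_M'(X, \bar x) + A_M^\dagger F_M(\bar x) - \bar x$, and the third hypothesis places this set strictly inside $\tilde\pi_M'(X) - \bar x$ in $\R^{2M}$. Because the LHS is compact and $\tilde\pi_M'(X - \bar x)$ is a convex body containing $0$ in its interior, the Minkowski functional of the latter attains a maximum $\lambda_M < 1$ on the compact LHS, furnishing the required finite-part scaling.

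With both hypotheses verified, Theorem~\ref{prop:Krawczyk}(iv) yields for each fixed $\alpha$ a unique $(\hat\omega_\alpha, \hat c_\alpha) \in \tilde X$ with $F_\alpha(\hat\omega_\alpha, \hat c_\alpha) = 0$, equivalently a unique $\hat x_\alpha = (\alpha, \hat\omega_\alpha, \hat c_\alpha) \in X$ with $F(\hat x_\alpha) = 0$, as claimed. The step I expect to be the main obstacle is the finite-dimensional extraction of $\lambda_M < 1$: one must pass from the strict interior inclusion (a purely topological statement) to a uniform scaling factor, which relies on compactness of the image together with continuity of the Minkowski functional of the cube $\tilde\pi_M'(X - \bar x) \subseteq \R^{2M}$.
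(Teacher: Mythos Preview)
Your proposal is correct and follows essentially the same route as the paper's own proof: fix $\alpha$, invoke Theorem~\ref{prop:K_Inclusion} to get $K_\alpha\subseteq X$, then verify the contraction hypothesis of Theorem~\ref{prop:Krawczyk}(iv) by producing $\lambda=\max(\lambda_M,\lambda_\infty)<1$, with $\lambda_\infty=g_\infty^{ii}(X)/C_0$ coming from \eqref{eq:prop:F_infty_tail} and $\lambda_M$ extracted from the compact containment in $\R^{2M}$. Your use of the Minkowski functional to extract $\lambda_M$ is a slightly more explicit version of the paper's ``positive distance from the boundary'' argument, and your remarks on injectivity of $A^\dagger$ and on $0\in\operatorname{int}\tilde\pi_M'(X-\bar x)$ make explicit points the paper leaves implicit.
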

\begin{proof}

Fix $\alpha \in \pi_\alpha(X)$. 
By Theorem \ref{prop:Krawczyk}, in order to show that there exists a unique solution  to $F_\alpha = 0$, it suffices to show that there is some $ 0 \leq \lambda < 1$ for which:
\[
(I-A^\dagger DF(X)) (X-\bar{x}) \subseteq \lambda(X-\bar{x}) .
\]

We find a $\lambda_M$ which works for the $\pi_M'$-projection and a $\lambda_\infty$ which works for the $ \pi_\infty'$-projection. 
Since $K(X,\bar{x}) \subseteq K'(X,\bar{x})$ by Theorem \ref{prop:K_Inclusion} and  $\tilde{\pi}_M' \left(  K'_M(X,\bar{x}) + A^\dagger_M F_M(\bar{x})\right) \subseteq int(\tilde{\pi}_M'(X))$, it follows from the definition of $K(X,\bar{x})$ in \eqref{eq:KrawczykDef}  that:  
\begin{align}
\nonumber
\tilde{\pi}_M '\left( 	K(X,\bar{x}) + A^\dagger F(\bar{x}) \right)
	&\subseteq int(\tilde{\pi}_M'(X))  
	\\
	\tilde{\pi}_M'\left((I-A^\dagger DF(X)) (X-\bar{x}) \right)
	&\subseteq int \left(\tilde{\pi}_M'(X  - \bar{x})  \right)
	\label{eq:CompactContainment}
\end{align}
Since $\tilde{\pi}_M'\left((I-A^\dagger DF(X)) (X-\bar{x}) \right)$ is compactly contained inside of $   \tilde{\pi}_M'(X  - \bar{x})   \subseteq \R ^ {2 M} $, there is some positive distance separating the LHS of \eqref{eq:CompactContainment} away from the boundary of $\tilde{\pi}_M'(X  - \bar{x})$. 
It follows that  there must exist some  $0 \leq \lambda_M < 1$ such that $\tilde{\pi}_M' \left((I-A^\dagger DF(X)) (X-\bar{x}) \right) \subseteq \lambda_M\cdot  \tilde{\pi}_M'(X  - \bar{x}) $. 

Since $ K'_\infty(X,\bar{x}) \subseteq  \pi_\infty' X$ it follows that $g_\infty(X) \leq C_0$, and by our additional assumption this is in fact a strict inequality. 
If we define $\lambda_\infty := g^{ii}_\infty(X)/C_0<1$, then by  \eqref{eq:prop:F_infty_tail} it follows that:
\[
\pi_{\infty} 	 (I- A^\dagger  DF (X)) \cdot (X - \bar{x})  \leq \lambda_\infty \pi_\infty(X- \bar{x}).
\]
If we   define $ \lambda := \max \{ \lambda_M,\lambda_\infty\} < 1$ then it follows that: 
\[
(I- A^\dagger  DF (X)) \cdot (X - \bar{x})  \leq \lambda (X- \bar{x}).
\]
By Theorem \ref{prop:Krawczyk} there exists a unique point $ \hat{x}_\alpha = ( \alpha, \hat{\omega}_\alpha , \hat{c}_\alpha)   \in X$ such that $ F_\alpha(\hat{\omega}_\alpha ,\hat{c}_\alpha ) =0$. 
Moreover, this is true for all $\alpha \in   \pi_\alpha (X)$. 
\end{proof}

%%%%%%%%%%%%%%%%%%%%%%%%%%%%%%%%%%%%%%%%%%%%%%%%%%%%%%%%%%%%%%%%%%%%%%%%%%%%%%%%%%%%%%%%%%%%%%%%%%%
%%%%%%%%%%%%%%%%%%%%%%%%%%%%%%%%%%%%%%%%%%%%%%%%%%%%%%%%%%%%%%%%%%%%%%%%%%%%%%%%%%%%%%%%%%%%%%%%%%%
%%%%%%%%%%%%%%%%%%%%%%%%%%%%%%%%%%%%%%%%%%%%%%%%%%%%%%%%%%%%%%%%%%%%%%%%%%%%%%%%%%%%%%%%%%%%%%%%%%%

%%%%%%%%%%%%%%%%%%%%%%%%%%%%%%%%%%%%%%%%%%%%%%%%%%%%%%%%%
%%%%%%%%%%			Prune    Operator		%%%%%%%%%%%%%
%%%%%%%%%%%%%%%%%%%%%%%%%%%%%%%%%%%%%%%%%%%%%%%%%%%%%%%%%

%%%%%%%%%%%%%%%%%%%%%%%%%%%%%%%%%%%%%%%%%%%%%%%%%%%%%%%%%%%%%%%%%%%%%%%%%%%%%%%%%%%%%%%%%%%%%%%%%%%
%%%%%%%%%%%%%%%%%%%%%%%%%%%%%%%%%%%%%%%%%%%%%%%%%%%%%%%%%%%%%%%%%%%%%%%%%%%%%%%%%%%%%%%%%%%%%%%%%%%
%%%%%%%%%%%%%%%%%%%%%%%%%%%%%%%%%%%%%%%%%%%%%%%%%%%%%%%%%%%%%%%%%%%%%%%%%%%%%%%%%%%%%%%%%%%%%%%%%%%

\section{Pruning  Operator}
\label{sec:Prune}

For a given cube, we want to know if it contains any solutions to $ F=0$. 
We try to determine this by combining several different tests into one \emph{pruning} operator described in Algorithm \ref{alg:Prune}. 
It is called a pruning operator because even if we cannot determine whether a cube contains a solution, we may still be able to reduce the size of the cube without losing any solutions.

We describe the tests performed in Algorithm \ref{alg:Prune}. 
Most simply, if we can prove that $ | F(X)|_k>0$ for some $ 1 \leq k \leq M$, then $F$ has no zeros in $X$. 
From Lemma \ref{prop:zeroneighborhood2}, we know that if a cube has a small $ \| \cdot \|_{\ell^1}$ norm then it cannot contain any nontrivial zeros. 
Furthermore, if a cube is contained in the  neighborhood of the Hopf bifurcation explicitly given by Lemma \ref{prop:BifNbd}, then the only solutions that can exist therein  are on the principal branch. 
If none of those situations apply, then we calculate the outer approximation of the Krawczyk operator given in Definition  \ref{def:KrawczykApprox}. 
If the hypothesis of Theorem \ref{prop:KrawczykOuterApprox} is satisfied, then there exists a unique solution. 
Alternatively, if $ X \cap K(X,\bar{x}) = \emptyset$, then there do not exist any solutions in $X$. 
If none of these other situations apply, then we replace $ X $ by $ X \cap K(X, \bar{x})$. 
Algorithm \ref{alg:Prune} arranges these steps in order of ease of computation. 
 
\begin{algorithm}[Prune]
	\label{alg:Prune}
	Take as input a cube $X$ with $ M\geq 5$ and $s >2$. 
	The output is a pair $\{flag,X'\} $ where  $ flag \in \Z$ and $X'\subseteq X$ is a cube. 	
	\begin{enumerate}
		%% 1 %%
		\item Compute $\delta :=2 \sum_{k=1}^M |X|_k+ \frac{2 C_0 }{(s-1) M^{s-1}} $. 
		%% 2 %%
		\item 
		If for all $ (\alpha,\omega,\cdot) \in X$ we have $ \alpha \in (0,2]$, $\omega \geq 1.1$, and  $\delta < g(\alpha , \omega)$ for $g$ defined in  \eqref{eq:ZeroNbd},  then return $  \{1,\emptyset\}$. 
		%% 3 %%
		\item If for all $ (\alpha,\omega,\cdot) \in X$ we have   $| \alpha -\pp | \leq 0.00553$,  $ | \omega - \pp | \leq 0.0924$ and $\delta < 0.18$, then return $  \{2,X\}$. 
		%% 4 %%
		\item If $ \inf_{x \in X} |F_M(x)|_k > h_k(X)$ for $h_k$ defined in \eqref{eq:F_tail}  and some $ 1 \leq k \leq M$, then return $ \{1,\emptyset\}$. 
		%% 5 %%
		\item Fix some $ \bar{x} \in X$ such that $\bar{x} = \pi_{M}'(\bar{x})$ and
			$ \pi_{M}'(\bar{x})$  is approximately the center of $ \pi_{M}'(X)$. 
		 Construct $K'(X,\bar{x}) $ as in  Definition \ref{def:KrawczykApprox}.
		%% 6 %%
		\item If  $  K'(X,\bar{x}) \subseteq X $, $g_\infty(X)<C_0$, and $\tilde{\pi}_M \left(  K'_M(X,\bar{x}) + A^\dagger_M F_M(\bar{x})\right) 		\subseteq int(\tilde{\pi}_M(X))$, 
		then return  $\{3,X\}$.  
		%% 7 %%
		\item If  $ X \cap K'(X,\bar{x}) = \emptyset$, then return $\{1,\emptyset\}$. 
		%% 8 %%
		\item Else  return $ \{0,X \cap K'(X,\bar{x})\}$.
	\end{enumerate}
\end{algorithm}

\begin{theorem}
	\label{prop:Prune}
	Let $ \{flag,X'\}$ denote the output of Algorithm \ref{alg:Prune} with input a cube $X$. 
	\begin{enumerate}[(i)]
		\item If $flag=1$, then $ F(x) \neq 0$ for all nontrivial $ x \in X$. 
		\item If $flag =2$, then the only solutions to $F=0$ in $X$ are  on the principal branch.
		\item If $flag =3$, then for all $\alpha \in \pi_\alpha(X) $ there is a unique $\hat{\omega}_\alpha \in \pi_{\omega}(X)$ and $ \hat{c}_\alpha \in \pi_c(X)$  such that $F(\alpha,\hat{\omega}_\alpha,\hat{c}_\alpha)=0$.
		\item If there are any points $ \hat{x} \in X$ for which $ F(\hat{x})=0$, then $ \hat{x} \in X'$. 
	\end{enumerate}
\end{theorem}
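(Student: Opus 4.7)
The plan is to verify each conclusion by exhausting the return points of Algorithm \ref{alg:Prune} and invoking the appropriate earlier results. For claim $(i)$, $flag=1$ can be returned at steps 2, 4, or 7. At step 2 the quantity $\delta$, by the tail estimate \eqref{eq:SumIntegral}, bounds $\|\pi_c(x)\|_{\ell^1}$ uniformly for $x\in X$, so $\delta<g(\alpha,\omega)$ together with Lemma \ref{prop:zeroneighborhood2} forces any zero in $X$ to be trivial. At step 4 the explicit lower bound $|F_M(x)|_k>[h(X)]_k$ combines with the analytic upper bound $|F_\infty(x)|_k\le[h(X)]_k$ from Lemma \ref{prop:CentralLemma} (inequality \eqref{eq:prop_F_tail}) via the reverse triangle inequality to give $|F(x)|_k>0$. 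At step 7, any zero $\hat{x}\in X$ would satisfy $\hat{x}\in K(X,\bar{x})\subseteq K'(X,\bar{x})$ by Theorem \ref{prop:Krawczyk}$(ii)$ and Theorem \ref{prop:K_Inclusion}, contradicting $X\cap K'(X,\bar{x})=\emptyset$.

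For $(ii)$, $flag=2$ is returned at step 3 precisely when every $(\alpha,\omega,c)\in X$ satisfies the hypothesis of Lemma \ref{prop:BifNbd}, with the bound $\|\pi_c(x)\|_{\ell^1}\le\delta<0.18$ again obtained from the tail estimate \eqref{eq:SumIntegral}; the lemma then locates any such zero on the principal branch. For $(iii)$, $flag=3$ is returned at step 6 exactly when the hypotheses of Theorem \ref{prop:KrawczykOuterApprox} are verified at the chosen center $\bar{x}$, so the existence and uniqueness of $(\hat{\omega}_\alpha,\hat{c}_\alpha)$ for each $\alpha\in\pi_\alpha(X)$ follow directly.

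For $(iv)$, the only step producing a strict sub-cube is step 8, where $X'=X\cap K'(X,\bar{x})$; by Theorem \ref{prop:Krawczyk}$(ii)$ and Theorem \ref{prop:K_Inclusion} every zero $\hat{x}\in X$ satisfies $\hat{x}\in K(X,\bar{x})\subseteq K'(X,\bar{x})$, hence $\hat{x}\in X'$. When $X'=X$ (steps 3 and 6) the inclusion is trivial, and when $X'=\emptyset$ (steps 4 and 7) the arguments for $(i)$ already show $F$ has no zeros in $X$, so the conclusion holds vacuously.

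The main subtlety is the bookkeeping at step 2: Lemma \ref{prop:zeroneighborhood2} only excludes nontrivial zeros of $F$, so strictly speaking $(iv)$ at step 2 holds modulo the trivial Fourier sequence — this is consistent with the paper's focus on SOPS, for which $y\equiv 0$ is not a candidate. Beyond that caveat, the proof is a direct case analysis matching each branch of the algorithm to an already-established result, and no new estimates are required.
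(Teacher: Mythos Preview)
Your proof is correct and follows essentially the same approach as the paper's own argument: a case analysis matching each return point of Algorithm~\ref{alg:Prune} to the relevant prior result (Lemma~\ref{prop:zeroneighborhood2}, Lemma~\ref{prop:BifNbd}, inequality~\eqref{eq:prop_F_tail}, Theorem~\ref{prop:Krawczyk}, Theorem~\ref{prop:K_Inclusion}, Theorem~\ref{prop:KrawczykOuterApprox}). The only difference is organizational---you group by claim $(i)$--$(iv)$ whereas the paper groups by algorithm step---and your explicit remark about the trivial solution at Step~2 mirrors the paper's own handling of that point.
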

\begin{proof}
	To prove $(i)$ we must check the output from Steps 2, 4, and 7. 
	To prove $(ii)$ we must check Step 3. 
	To prove $(iii)$ we must check Step 6. 
	The proof of $(iv)$ follows from $(i)$, $(ii)$, $(iii)$, and Step 8.
	We organize the proof into the steps of the algorithm. 
	\begin{enumerate}
		%% 1 %%
		\item It follows from \eqref{eq:SumIntegral} that   $ \|c\|_{\ell^1} < \delta $ for all $ c  \in \pi_c(X)$. 
		%% 2 %%
		\item Since  $\alpha \in (0,2]$ and $\omega \geq 1.1$,  Lemma \ref{prop:zeroneighborhood2}  applies. If  $ \|c\|_{\ell^1} < \delta  < g(\alpha , \omega)$, then by Lemma \ref{prop:zeroneighborhood2} the only solutions to $F(\alpha,\omega,c)=0$ are trivial, which is to say $ c =0$. 
		%% 3 %%
		\item If Step 3 returns $flag=2$, then by Lemma \ref{prop:BifNbd} 	there is at most one SOPS $ c \in X$ with frequency $ \omega$, and it lies on the branch of SOPS originating from the Hopf bifurcation at $\alpha = \pp$. 
		%% 4 %%
		\item  Suppose that $ \inf_{x \in X} |F_M(x)|_k > h_k(X)$ for some $ 1 \leq k \leq M$. 
		Since  $ \sup_{x \in X} |F_\infty(x)|_k < h_k(X)$ by  \eqref{eq:prop_F_tail}, it follows from the triangle inequality that for all $ x \in X$  we have: 
		\[
		|F(x)|_k \geq \inf_{x \in X} |F_M(x)|_k - \sup_{x \in X} |F_\infty(x)|_k >0.
		\]
		Hence $|F(x)|_k >0$, and so $X$ cannot contain any zeros of $F$.
		%% 5 %%
		\item Note that $ K(X,\bar{x}) \subseteq K'(X,\bar{x})$ by Theorem \ref{prop:KrawczykOuterApprox}. 
		%% 6 %%
		\item 
		If Step 6 returns $flag=3$, then the hypothesis of Theorem \ref{prop:KrawczykOuterApprox} is satisfied. 
		Hence for all $ \alpha \in \pi_\alpha(X) $  there is  a unique $\hat{\omega}_\alpha \in \pi_{\omega}(X)$ and $ \hat{c}_\alpha \in \pi_c(X)$  such that $F(\alpha,\hat{\omega}_\alpha,\hat{c}_\alpha)=0$. 	 
		%% 7 %%
		\item 
		By Theorem \ref{prop:Krawczyk} all solutions in $X$ are contained in $K(X,\bar{x})$. 
		Hence, all of the zeros of $F$ in $X$ are contained in $ X \cap K(X,\bar{x}) \subseteq X \cap K'(X,\bar{x})$. 
		
		If  $ X \cap K'(X,\bar{x}) = \emptyset$ then  $ X \cap K(X,\bar{x}) = \emptyset$,	whereby there cannot be any solutions in $X$. 
		%% 8 %%
		\item As proved in  Step 7, all solutions in $X$ are contained in $X \cap K'(X,\bar{x})$. 
	\end{enumerate}
\end{proof}

%%%%%%%%%%%%%%%%%%%%%%%%%%%%%%%%%%%%%%%%%%%%%%%%%%%%%%%%%%%%%%%%%
%%%%%%%%%%%%%%%%%%%%%%%%%%%%%%%%%%%%%%%%%%%%%%%%%%%%%%%%%%%%%%%%%

%%%%%%%%%%%%%%%%%%%%%%%%%%%%%%%%%%%%%%%%%
%%%%%%%%%    Solution Space   %%%%%%%%%%%
%%%%%%%%%%%%%%%%%%%%%%%%%%%%%%%%%%%%%%%%%

%%%%%%%%%%%%%%%%%%%%%%%%%%%%%%%%%%%%%%%%%%%%%%%%%%%%%%%%%%%%%%%%%
%%%%%%%%%%%%%%%%%%%%%%%%%%%%%%%%%%%%%%%%%%%%%%%%%%%%%%%%%%%%%%%%%
	\section{Global Bounds on the Fourier Coefficients}
	\label{sec:SolutionSpace}
	
The goal of this section is to construct a   bounded region in $\R^2 \times \oo^s$ which contains all of the nontrivial zeros of $F$.  
This is ultimately achieved  in Algorithm \ref{alg:Comprehensive}, which is discussed in Section \ref{sec:FourierProjWright}, along with other estimates pertaining specifically to Wright's equation.

In Section \ref{sec:FourierProj}, we discuss generic algorithms used to construct   bounds in Fourier space.  
Algorithm \ref{alg:FourierProjection} converts pointwise bounds on a periodic function and its derivatives into a cube containing  its Fourier coefficients. 
Algorithm \ref{alg:TimeTranslate} modifies a cube so that after a time translation, any periodic function contained therein will satisfy the phase condition $ c_1 = c_1^*$. 

	\subsection{Converting Pointwise Bounds into Fourier Bounds}
	\label{sec:FourierProj}

	To translate pointwise bounds on a periodic function into bounds on its  Fourier coefficients we use the unnormalized $L^2$ inner product,  which we define for $g,h \in L^2([0,2 \pi / \omega],\C)$ as: 
	\begin{equation}
	\left< g ,h \right> :=  \int_0^{2 \pi / \omega } g(t) h(t)^* \,dt.
	\label{eq:L2InnerProduct}
	\end{equation}
	For a function $y$  given as in  \eqref{eq:FourierEquation},  its Fourier coefficients may be calculated as $c_k =  \tfrac{1}{2 \pi / \omega} \left< y(t), e^{i \omega k t}  \right> $.  
	By applying \eqref{eq:L2InnerProduct} to \emph{a priori} estimates on $y$ we are able to derive bounds on its Fourier coefficients. 
	For example, in  \cite{wright1955non} it is shown that $ -1 < y(t) < e^\alpha -1$ for any global solution  to \eqref{eq:Wright}. 
	Hence, when $e^\alpha \geq 2$ the Fourier coefficients of any periodic solution to \eqref{eq:Wright} must satisfy $|c_k|  \leq \tfrac{1 }{2 \pi /\omega } ( e^\alpha -1)$ for all $ k \in \Z$.

	With more detailed estimates on $y$ we can produce tighter bounds on its Fourier coefficients. 
	In \cite{jlm2016Floquet,neumaier2014global} such estimates are numerically derived in a rigorous fashion. 
	One of the results from this analysis is a pair of bounding functions which provide upper and lower bounds on SOPS to \eqref{eq:Wright} at a given parameter value. 
	Formally, a \emph{bounding function} is defined to be an interval valued function $\chi(t) = [ \ell(t), u(t)]$ where $ \ell,u:\R\to  \R$.

 	These functions $\ell,u$ are constructed in \cite{neumaier2014global,jlm2016Floquet} using rigorous numerics, and in particular interval arithmetic. 
 	As a matter of computational convenience,  these functions are defined as piecewise constant functions which change value only finitely many times (see Figure \ref{fig:FourierDerivativeProjections}). 
For functions of this form, calculating a supremum over a bounded domain is reduced to finding the maximum of a finite set, and calculating an integral is reduced into a finite sum. 
For elementary functions   such as $\sin$ or $\cos$,  interval arithmetic packages have been developed which allow us to rigorously bound their image over arbitrary domains\cite{rump1999intlab}.

\begin{minipage}{\textwidth}	
	\begin{minipage}[b]{0.5\textwidth}
				\centering	
		\captionof{subfigure}{Bounds for $y$}
	    	\includegraphics[width = .8\textwidth]{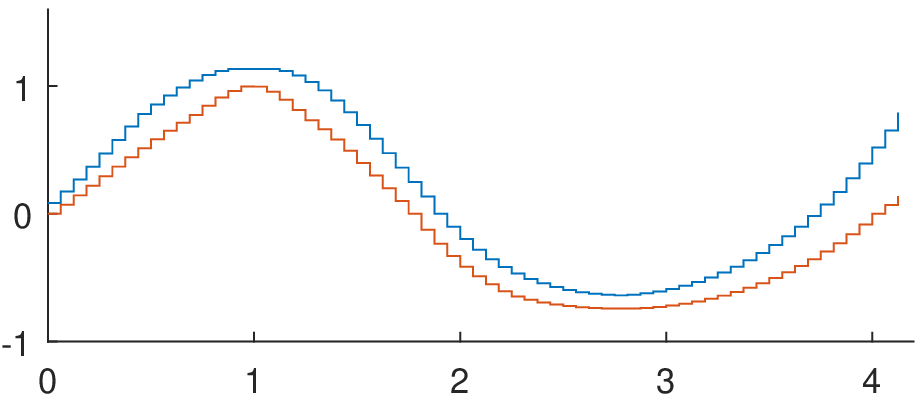} 	
		\[
		\begin{array}{c|c|c}
		k & A_{k,0} & B_{k,0} \\ 
		\hline
		1&	[   -0.103,    \;\;\;0.181] &[   -0.544,   -0.317] \\
		2&	[   -0.238,    \;\;\;0.110] &[   -0.142,   \;\;\; 0.187] \\
		3&	[   -0.207,    \;\;\;0.228] &[   -0.205,    \;\;\; 0.211] 
		\end{array}
		\]			
	\end{minipage}
	\hfill
	\begin{minipage}[b]{0.5\textwidth}
		\centering
				\captionof{subfigure}{Bounds for $y'$}
			    	\includegraphics[width = .8\textwidth]{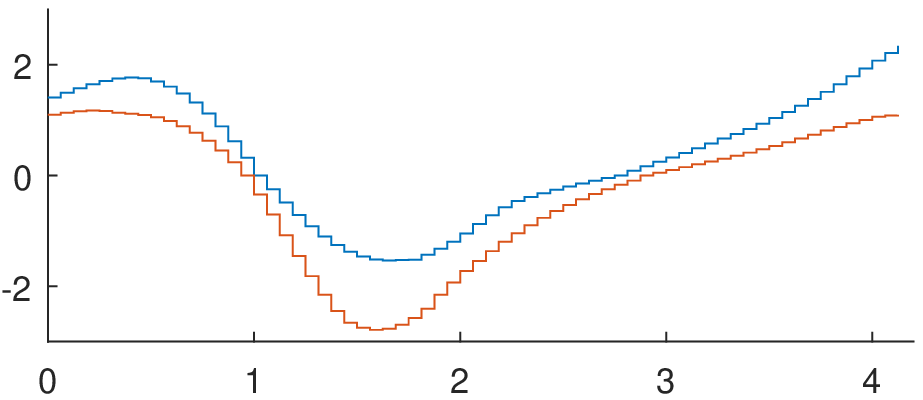} 	
		\[
		\begin{array}{c|c|c}
		k & A_{k,1} & B_{k,1} \\ 
		\hline
		1&	[   -0.154,    \;\;\;0.205] & [   -0.673,   		 -0.192] \\
		2&	[   -0.215,   	\;\;\;0.031] & [   -0.100,    \;\;\;0.179] \\
		3&	[   -0.094,    \;\;\;0.109] & [   -0.090,    \;\;\;0.125] 
		\end{array}
		\]	
	\end{minipage}	
	\\
	\begin{minipage}[b]{0.5\textwidth}
		\centering
		\vspace{4mm}
				\captionof{subfigure}{Bounds for $y''$}
		\includegraphics[width = .8\textwidth]{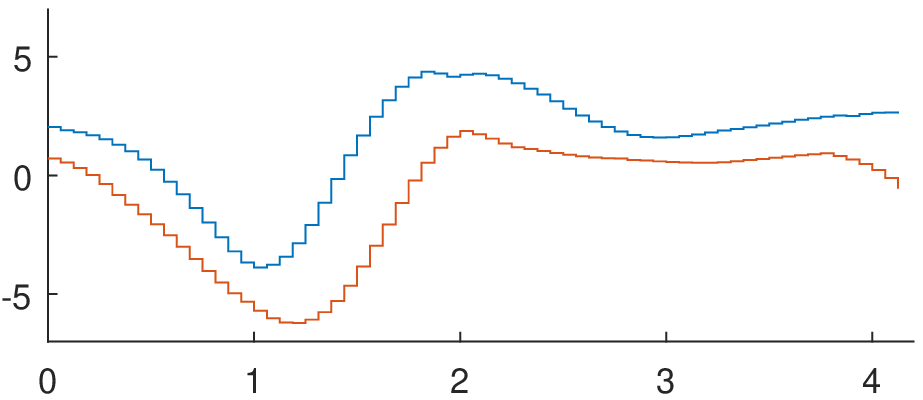} 		 				
		\[
		\begin{array}{c|c|c}
		k & A_{k,2} & B_{k,2} \\ 
		\hline
		1&		[   -0.384,    \;\;\; 0.525] &[   -0.848,    	 -0.103] \\
		2&		[   -0.205,    \;\;\; 0.037] &[   -0.094,    \;\;\;0.155] \\
		3&		[   -0.051,    \;\;\; 0.077] &[   -0.054,    \;\;\;0.071] 
		\end{array}
		\]	
	\end{minipage}
	\hfill
	\begin{minipage}[b]{0.49\textwidth}
		\centering	
				\vspace{4mm}
		\captionof{subfigure}{Bounds for $y'''$}
		\includegraphics[width = .8\textwidth]{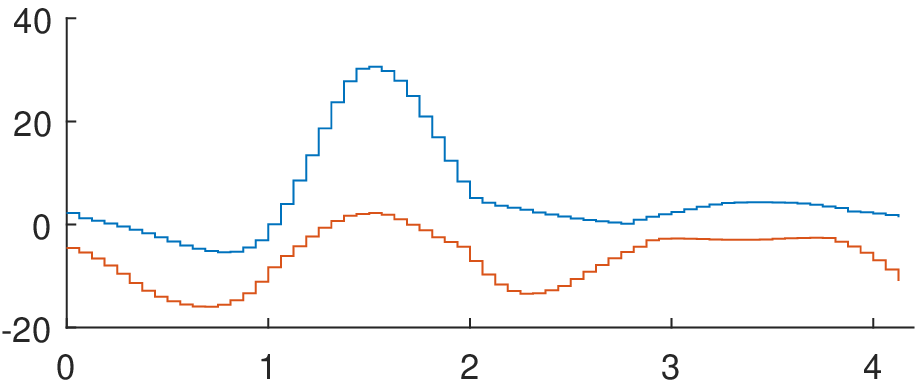} 		 				
		\[
		\begin{array}{c|c|c}
		k & A_{k,3} & B_{k,3} \\ 
		\hline
		1&		[   -0.995,    \;\;\; 1.160] &[   -1.713,    \;\;\;0.715] \\
		2&		[   -0.279,    \;\;\; 0.053] &[   -0.120,    \;\;\;0.194] \\
		3&		[   -0.039,    \;\;\; 0.068] &[   -0.045,    \;\;\;0.063] 
		\end{array}
		\]	
	\end{minipage}
\captionof{figure}{ \footnotesize
	Depicted in the  figures are functions $ \ell^{s},u^{s}: \R \to \R $ which bound a periodic function $y$ and its derivatives $y^{(s)}$.   
Depicted in the tables are the values for $ A_{k,s}$ and $B_{k,s}$ produced by Algorithm \ref{alg:FourierProjection} which  bound the Fourier coefficients $c_k = a_k + i b_k$ of $y$.  
}
\label{fig:FourierDerivativeProjections}
\end{minipage}
\vspace{4mm} %%% NOTE :: Using minipage here does not automatically put some space inbetween the figure and the ensuing text. 

Algorithm \ref{alg:FourierProjection} describes a method for obtaining rigorous bounds on the Fourier coefficients of a  periodic function $y$.
This algorithm applies the inner product $\left< \cdot , \cdot \right>$ to bounds  not just on the function $y$ but on its derivatives as well. 
Examples of these bounds are given in Figure  \ref{fig:FourierDerivativeProjections}, where we note  that by the third Fourier coefficient, the tightest estimate is given by the third derivative. 
We will use $ y^{(s)}$ denotes the $s$\textsuperscript{th} derivative of a function $y$, whereas we will use $Y^s$ to  denote a bounding function of index $s$, which bounds the derivative  $y^{(s)}$.

We have stated Algorithm \ref{alg:FourierProjection} so that it does not estimate the zeroth Fourier coefficient, as periodic solutions to \eqref{eq:Wright} necessarily have a trivial zeroth Fourier  coefficient. 
The algorithm could be modified in the obvious way to bound the zeroth Fourier coefficient of a function as well.

	\begin{algorithm}
		\label{alg:FourierProjection}  
		Take as input projection dimension $M \in \N$, period bounds $ [\underline{L},\overline{L}]$, and a collection of interval-valued functions:
		$$
		\left\{
		 Y^{s}(t) = [ \ell^{s}(t),u^{s}(t)] : \ell^{s},u^{s}:\R \to \R \right\}_{s=0}^S .
		$$	
		The output is an ($\alpha$-parameterless) cube $ X \subseteq \R^1 \times \oo^S$.  
		\begin{enumerate}
			%% 0 %% 
			\item  
			Define  
			$I_{\omega} := [2 \pi /  \overline{L} , 2 \pi/ \underline{L}]$. 
			%% 1 %% 
			\item  
			For $ 1 \leq k \leq M$ and $ 0 \leq s \leq S$ define $ \delta_c,\delta_s \in \R_+$ so that:   
			\begin{align*}
			\delta_{c} &\geq \sup_{ \omega \in I_\omega, y^{s} \in Y^{s}}  
			\int_{\underline{L}}^{\overline{L}}
			\left| \cos( \omega k t )  y^{s}(t)  \right| dt,  & 
			\delta_{s} &\geq
			\sup_{ \omega \in I_\omega, y^{s} \in Y^{s}}  
			\int_{\underline{L}}^{\overline{L}}
			\left| \sin( \omega k t )  y^{s}(t)  \right| dt,
			\end{align*}
			and define $ a^+_{k,s},a^-_{k,s},b^+_{k,s},b^-_{k,s} \in \R_+$ so that: 
			\begin{align*}
				a^+_{k,s} &\geq \;\;\; \delta_c +
				 \sup_{ \omega \in I_\omega, y^{s} \in Y^{s}}  
				 \int_{0}^{\underline{L}} \cos( \omega k t ) y^{s}(t)  dt
 				 %%%%%%%%%%%%%%%%%%%%%
				 \\
				 a^-_{k,s} &\leq - \delta_c +
				  \inf_{ \omega \in I_\omega, y^{s} \in Y^{s}}  
				 \int_{0}^{\underline{L}} \cos( \omega k t ) y^{s}(t)  dt
				 %%%%%%%%%%%%%%%%%%%%%
				 \\
				 b^+_{k,s} &\geq \;\;\; \delta_s +
				 \sup_{ \omega \in I_\omega, y^{s} \in Y^{s}}  
				 \int_{0}^{\underline{L}} \sin( \omega k t ) y^{s}(t)  dt
 				 %%%%%%%%%%%%%%%%%%%%%
				 \\
				  b^-_{k,s} &\leq -\delta_s +
				  \inf_{ \omega \in I_\omega, y^{s} \in Y^{s}}  
				 \int_{0}^{\underline{L}} \sin( \omega k t ) y^{s}(t)  dt. 
			\end{align*}
			
			%% 2 %% 
			\item  For $ 1 \leq k \leq M$ and $ 0 \leq s \leq S$ define: 
			\begin{align}
				A'_{k,s} &:= 
\frac{1}{2 \pi k^s}
				\left[ 
					 \inf_{ \omega \in I_\omega} \frac{a^-_{k,s}}{\omega^{s-1}},					
					\sup_{ \omega \in I_\omega} \frac{a^+_{k,s}}{\omega^{s-1}}
				\right], & 
								B'_{k,s} &:= 
				\frac{1}{2 \pi k^s}
				\left[ 
				\inf_{ \omega \in I_\omega} \frac{b^-_{k,s}}{\omega^{s-1}},					
				\sup_{ \omega \in I_\omega} \frac{b^+_{k,s}}{\omega^{s-1}}
				\right].
							\label{eq:A'B'}
			\end{align}
			Define the intervals $A_{k,s}$ and $B_{k,s}$ as follows:
			\begin{align*}
			A_{k,s} &:=
			\begin{cases}
			\;\;\;A'_{k,s}  & \mbox{ if }   s\equiv 0  \pmod 4  \\
			-B'_{k,s}  & \mbox{ if }   s\equiv 1  \pmod 4  \\				
			-A'_{k,s}  & \mbox{ if }   s\equiv 2  \pmod 4  \\
			\;\;\;B'_{k,s} & \mbox{ if }   s\equiv 3  \pmod 4 
			\end{cases} ,
			&
			B_{k,s} &:=
			\begin{cases}
			-B'_{k,s}  & \mbox{ if }   s\equiv 0  \pmod 4  \\
			-A'_{k,s}  & \mbox{ if }   s\equiv 1  \pmod 4  \\				
			\;\;\;B'_{k,s}  & \mbox{ if }   s\equiv 2 \pmod 4  \\
			\;\;\;A'_{k,s}  & \mbox{ if }   s\equiv 3  \pmod 4 
			\end{cases}.
			\end{align*}
			%% 3 %% 
			\item For $ 1 \leq k \leq M$ define: 
			\begin{align*}
			A_k &:= \bigcap_{0\leq s \leq S} A_{k,s} ,
			&
			B_k &:= \bigcap_{0\leq s \leq S} B_{k,s}.
			\end{align*}
			%% 4 %% 
			\item 
			For each $1 \leq k \leq M$, define $ \bar{a}_k := mid(A_{k,S})$, $ \bar{b}_k := mid(B_{k,S})$,  $ \bar{c}_k = \bar{a}_k + i \bar{b}_k$, and $ \bar{c}_{-k} = \bar{c}_{k}^*$.  
			Define $y_M^{S}(t,\omega) $ as in \eqref{eq:MidProjection}, and define $C_0>0$  so that \eqref{eq:TailBound} holds.   
			\begin{align}
			y_M^{S}(t,\omega) &:= \sum_{k=-M}^{M} \bar{c}_{k} (i \omega k )^S e^{i \omega k t} \label{eq:MidProjection}\\ 
			%		 	&=& 2 \sum_{k=1}^{M}   (\omega k )^3 \left( a_k \sin(kt) + b_k\cos (kt) \right ) \\ 
			C_0 &\geq  \sup_{\omega \in I_{\omega }, y^{S} \in Y^{S} } \frac{1 }{2 \pi \omega^{S-1}} \int_0^{\overline{L}} \left| y^{S} (t)- y_M^{S}(t,\omega) \right|dt . \label{eq:TailBound}
			\end{align} 
			%% 5 %% 
			\item  
			Define a cube   $ X := X_M \times X_\infty \subseteq \R^1 \times \Omega^S$ by: 
			\begin{align*}
			X_{M} 		&:=   I_\omega  \times \prod_{k=1}^M A_k \times B_k  \\	
			X_\infty 	&:= \left\{ c_k  \in \C : |c_k| \leq C_0 /k^S  \right\}_{k=M+1}^\infty .
			\end{align*}		
			
		\end{enumerate}
	\end{algorithm}

	\begin{proposition}
		\label{prop:FourierProjection}
		Let the cube $X$   be the output of Algorithm \ref{alg:FourierProjection} with input $M \in \N$, $ [ \underline{L},\overline{L}] \subseteq \R$ and bounding functions $\{Y^{s}\}_{s=0}^{S}$. 
		Fix a function $ \hat{y}$   with period $L$ and continuous derivatives $ \hat{y}^{(s)}$ for $ 0 \leq s \leq S$.  
		If $L \in [\underline{L},\overline{L}]$ and  $ \hat{y}^{(s)}(t) \in Y^{s}(t)$ for all $ 0 \leq s \leq S$ and   $ t \in [0,\overline{L}]$, 
		then the  frequency and Fourier coefficients of $ \hat{y}$ satisfy $(\omega, \{c_k\}_{k=1}^\infty ) \in X$. 
	\end{proposition}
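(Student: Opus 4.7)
The plan is to verify the three parts of the containment $(\omega,\{c_k\}_{k=1}^\infty) \in X$ separately. The frequency coordinate $\omega = 2\pi/L$ immediately satisfies $\omega \in [2\pi/\overline{L},2\pi/\underline{L}] = I_\omega$ by Step~1, since $L \in [\underline{L},\overline{L}]$. For each $1 \le k \le M$, I start from the standard formula $c_k = \tfrac{1}{L}\int_0^L \hat y(t)\,e^{-i\omega k t}\,dt$ and integrate by parts $s$ times; periodicity of $\hat y$ and of $e^{-i\omega k t}$ kills all boundary terms, giving the key identity
\[
c_k \;=\; \frac{(-i)^s}{L\,(\omega k)^s}\int_0^L \hat y^{(s)}(t)\bigl[\cos(\omega k t) - i\sin(\omega k t)\bigr]\,dt.
\]
Writing $c_k = a_k + ib_k$ and expanding $(-i)^s$, which cycles through $\{1,-i,-1,i\}$ with $s \bmod 4$, redistributes the integrals $P_s := \int_0^L \hat y^{(s)}(t)\cos(\omega k t)\,dt$ and $Q_s := \int_0^L \hat y^{(s)}(t)\sin(\omega k t)\,dt$ into the real and imaginary parts of $c_k$. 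A direct case check shows this redistribution matches exactly the cyclic relabelling that defines $A_{k,s},B_{k,s}$ from $A'_{k,s},B'_{k,s}$ in Step~3: $A'_{k,s}$ is always assigned to bound the $P_s$-contribution and $B'_{k,s}$ the $Q_s$-contribution. It then remains to verify $\tfrac{P_s(\omega)}{2\pi k^s \omega^{s-1}} \in A'_{k,s}$ and similarly for $Q_s$. The only delicate point is that $L$ is not known precisely; I would split $\int_0^L = \int_0^{\underline{L}} + \int_{\underline{L}}^L$, bound the first summand by the sup/inf defining $a^\pm_{k,s},b^\pm_{k,s}$ in Step~2, and absorb the second into the corrections $\delta_c,\delta_s$, using $L \le \overline{L}$ to dominate $|\int_{\underline{L}}^L \cdot |$ by $\int_{\underline{L}}^{\overline{L}}|\cdot|$. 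Intersecting the resulting interval bounds over all $0 \le s \le S$ yields $a_k \in A_k$ and $b_k \in B_k$.

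For the tail coefficients $k > M$, I reuse the same identity with $s = S$, but first subtract the explicit trigonometric polynomial $y_M^S(\cdot,\omega)$ defined in \eqref{eq:MidProjection}. Because $y_M^S$ supports only Fourier modes $|k|\le M$, for $k > M$ we have
\[
(i\omega k)^S c_k \;=\; \frac{1}{L}\int_0^L \bigl[\hat y^{(S)}(t) - y_M^S(t,\omega)\bigr]\,e^{-i\omega k t}\,dt.
\]
Bounding the exponential in modulus by $1$, using $1/L = \omega/(2\pi)$, and enlarging the domain of integration to $[0,\overline{L}]$ (valid because $L \le \overline{L}$ and the integrand is non-negative) yields
\[
|c_k| \;\le\; \frac{1}{2\pi\,\omega^{S-1}\,k^S}\int_0^{\overline{L}}\bigl|\hat y^{(S)}(t) - y_M^S(t,\omega)\bigr|\,dt.
\]
Taking a supremum over $\omega \in I_\omega$ and $\hat y^{(S)} \in Y^S$ on the right reproduces exactly the defining inequality \eqref{eq:TailBound} for $C_0$, so $|c_k| \le C_0/k^S$, completing membership in $X_\infty$.

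The main obstacle here is bookkeeping rather than analysis. One must carefully track the $(-i)^s$ phase factor through the $s \bmod 4$ case split so that the cyclic relabelling in Step~3 matches the way cosine and sine integrals are distributed between $a_k$ and $b_k$, and simultaneously reconcile the $1/L = \omega/(2\pi)$ prefactor with the explicit $\omega^{s-1}$ denominator appearing in \eqref{eq:A'B'}. Once these two alignments are in place, the proposition follows from iterated integration by parts together with the trivial $L \le \overline{L}$ domain extension.
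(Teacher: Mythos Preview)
Your proposal is correct and follows essentially the same route as the paper: the frequency bound is immediate; the low-mode bounds come from expressing $c_k$ in terms of $\int_0^L \hat y^{(s)}(t)e^{-i\omega kt}\,dt$ (the paper reaches this via the Fourier series of $\hat y^{(s)}$ rather than integration by parts, but the resulting identity $i^s c_k = (a_{k,s}-ib_{k,s})/(2\pi\omega^{s-1}k^s)$ is the same), then splitting $\int_0^L = \int_0^{\underline L} + \int_{\underline L}^{L}$ and absorbing the second piece into $\delta_c,\delta_s$; and the tail bound comes from subtracting $y_M^S$ and using orthogonality exactly as you describe. The paper likewise flags the $s \bmod 4$ bookkeeping and the $1/L=\omega/(2\pi)$ normalization as the only points requiring care.
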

	\begin{proof}

		We organize the proof into the steps of the algorithm. 
		\begin{enumerate}
			%% 1 %%
			\item  
			If the period of $\hat{y}$ is $L \in [ \underline{L} ,  \overline{L}]$ then  it will have frequency $ \hat{\omega} = 2 \pi / L$ and $ \hat{\omega} \in 
			[2 \pi /  \overline{L} , 2 \pi/ \underline{L}]$. 
			%% 2 %%
			\item 
			
		Let us define 			
			\begin{align*}
			a_{k,s}   	&:= \left< \cos( \hat{\omega}  k t) , \hat{y}^{(s)}(t) \right>, &
			b_{k,s} 	&:= \left< \sin ( \hat{\omega}  k t) , \hat{y}^{(s)}(t) \right> .
			\end{align*}
			We show that $ a_{k,s} \in [a^-_{k,s},a^+_{k,s}]$. 	
			Since $L \in [ \underline{L} ,  \overline{L}]$  it follows that: 
			\begin{align}
			\left< \cos( \hat{\omega}  k t) , \hat{y}^{(s)} (t) \right> 
			 &=  	\int_0^{L } \cos( \hat{\omega}  k t ) \hat{y}^{(s)}(t)  dt \nonumber \\
			 &=  	\int_0^{\underline{L}} \cos( \hat{\omega}  k t ) \hat{y}^{(s)}(t)  dt 
			 +  	\int_{\underline{L}}^{L}\cos( \hat{\omega}  k t ) \hat{y}^{(s)}(t)  dt .
			 \label{eq:TailPeriod}
			\end{align}
			To estimate the rightmost summand in \eqref{eq:TailPeriod} we calculate:
			\begin{align*}
			\left| 	\int_{\underline{L}}^{L}\cos( \hat{\omega}  k t ) \hat{y}^{(s)}(t)  dt \right| \leq	\int_{\underline{L}}^{\overline{L}}  \left|  \cos( \hat{\omega}  k t ) \hat{y}^{(s)}(t)  \right| dt  
			\leq  \sup_{ \omega \in I_\omega, y^{s} \in Y^{s}}  
			\int_{\underline{L}}^{\overline{L}}
			\left| \cos( \omega k t )  y^{s}(t)  \right| dt \leq \delta_c.
			\end{align*}
			We obtain a bound on $a_{k,s}$ by appropriately taking an infimum and a  supremum in \eqref{eq:TailPeriod} as follows: 
			\begin{align*}
				\inf_{ \omega \in I_\omega, y^{s} \in Y^{s}}  
			\int_{0}^{\underline{L}} \cos( \omega k t ) y^{s}(t)  dt
			-\delta_c
				\leq 
				 a_{k,s}
				\leq   
				\sup_{ \omega \in I_\omega, y^{s} \in Y^{s}}  
			\int_{0}^{\underline{L}} \cos( \omega k t ) y^{s}(t)  dt
			+	\delta_c.
			\end{align*}
			Hence $ a_{k,s} \in [a^-_{k,s},a^+_{k,s}]$, and  by analogy $ b_{k,s} \in [b^-_{k,s},b^+_{k,s}]$.

			%% 3 %% 
			\item  
			Let $ c_k = a_k + i b_k$ denote the Fourier coefficients of $ \hat{y}$.  
			We show that $a_k \in  A_{k,s}$ and $b_k \in B_{k,s}$. 
			Firstly, we   calculate the  derivative $\hat{y}^{(s)}$ as follows:
			\[
			\hat{y}^{(s)}(t) = \sum_{k\in \Z} c_k (i \hat{\omega}  k)^s e^{i \hat{\omega}  k t}.
			\]
			We can express the Fourier coefficients of $ \hat{y}$ in terms of the Fourier coefficients of its derivatives $ \hat{y}^{(s)}$;  
			below, we calculate $c_k$ in terms of $ a_{k,s}$ and $ b_{k,s}$ as follows:  
			\begin{align}
			\int_0^{2 \pi / \hat{\omega}  }    c_k (i \hat{\omega}  k)^s e^{i \hat{\omega}  k t} \cdot e^{- i \hat{\omega} k t}dt    
			&=  			
			\left<  \hat{y}^{(s)}(t) ,e^{ i \hat{\omega}  k t} \right>
			\label{eq:FourierDerivativeInnerProduct}
			\\
			\frac{2 \pi }{\hat{\omega} } c_k ( i \hat{\omega}  k )^s			
			&= 		\left<  \hat{y}^{(s)}(t) ,\cos( \hat{\omega}  k t) \right> 
			-i \left< \hat{y}^{(s)}(t),\sin ( \hat{\omega}  k t)  \right> 
			\nonumber 			\\
			i^{s}a_{k}   +i^{s+1}b_{k}  
			&=   
			 \frac{a_{k,s} - i \, b_{k,s} }{2 \pi  \hat{\omega} ^{s-1}  k^{s}}  \nonumber 
			 .
			\end{align}
			From the definition of $ A'_{k,s}$ and $ B_{k,s}'$ in \eqref{eq:A'B'} it follows that:   
			\begin{align*}
				 \frac{a_{k,s} }{2 \pi  \hat{\omega}^{s-1}  k^{s}}  & \in A_{k,s}' ,&
 				 \frac{ \, b_{k,s} }{2 \pi  \hat{\omega}^{s-1}  k^{s}}  & \in B_{k,s}'. 
			\end{align*}
			By   matching the real and imaginary parts, which only depend on  $ s \pmod 4$, we obtain that $a_k \in A_{k,s}$ and $b_k \in B_{k,s}$.

			%% 4 %%
			\item Since $a_k \in A_{k,s}$ and $b_k \in B_{k,s}$ for all $ k$ and $0 \leq s \leq  S$, it follows that: 
			\begin{align*}
				a_k &\in \bigcap_{0 \leq s \leq S} A_{k,s}, &
				b_k &\in \bigcap_{0 \leq s \leq S} B_{k,s} .
			\end{align*}
			%% 5 %%
			\item 
			We calculate $c_k$ for $ k \geq M+1$ starting from \eqref{eq:FourierDerivativeInnerProduct}  and using the fact that the functions $e^{i \hat{\omega} k t}$ are $L^2$--orthogonal:  
			\begin{align*}
			c_k  ( i \hat{\omega }k )^S 
			&= \frac{1}{2 \pi / \hat{\omega}}
			\left< e^{i \hat{\omega } k t} , \hat{y}^{(S)}(t)  \right> \\
			&= \frac{1}{2 \pi / \hat{\omega}}
			\left< e^{i \hat{\omega } k t} , \hat{y}^{(S)}(t) 
			-  \sum_{j=-M}^{M} \bar{c}_{j} (i \hat{\omega } j )^S e^{i \hat{\omega } j t} \right>  \\
			&= \frac{1}{2 \pi / \hat{\omega}}
			\left< e^{i \hat{\omega } k t} , \hat{y}^{(S)}(t) -  y_M^S(t,\hat{\omega }) \right> .
			\end{align*}
			By taking absolute values, and the suprema over $ \omega \in I_\omega$ and $  y ^S \in Y^S$ we obtain the following. 
			\begin{align*}
			\left|   c_k  ( i \hat{\omega } k )^S \right|
			&\leq 
			 \frac{1 }{2 \pi / \hat{\omega }} 
			 \int_0^{L} 
			 \left|e^{-i \hat{\omega } k t}\right| \left| \hat{y}^{(S)} (t)- y_M^{S}(t,\hat{\omega }) \right|dt \\
			 %%%%%%%%%%%%%%%%%%%%%%%%%%%%
			 %%%
			 |c_k| k^S
				&\leq \sup_{\omega \in I_{\omega } , y^{S} \in Y^{S}} \frac{1 }{2 \pi \omega^{S-1}} \int_0^{\overline{L}} \left| y^{S} (t)- y_M^{S}(t,\omega) \right|dt \\
				&\leq C_0.
			\end{align*}
			Hence $ |c_k| \leq C_0 / k^S$ for all $ k \geq M+1$. 
			%% 6 %% 
			\item  In Step 1 we showed that $ \hat{\omega} \in I_\omega$.  
			In Steps 2-4 we showed that $ c_k \in [X]_k$ for $ 1 \leq k \leq M$, and in Step $5$ we showed that $ |c_k| \leq C_0 / k^S$  for $ k \geq M+1$.

		\end{enumerate}
	\end{proof}

	\begin{algorithm}
		\label{alg:TimeTranslate}
		Take as input an  ($\alpha$-parameterless) cube $X \subseteq \R^1 \times \oo^s$. The output is an  ($\alpha$-parameterless) cube $ X' \subseteq \R^1 \times \tilde{\oo}^s$. 
	\end{algorithm}
	\begin{enumerate}
			%% 5 %%
			\item For $ [X]_1 = A_1 \times B_1$, 
			with $ A_1=  [ \underline{A}_1 , \overline{A}_1 ]$ 
and 
 $B_1 = [ \underline{B}_1 , 
				 \overline{B}_1 ]$,
			define an interval $\Theta \subseteq \R$ so that: 
	\begin{align*}
\Theta & \supseteq  
\begin{cases}
	\bigcup_{a_1 \in A_1,b_1 \in B_1} 
	\;\;\;\tan^{-1}  ( b_1/a_1)
	& \mbox{ if } \underline{A}_1 > 0 
	\\
	\bigcup_{a_1 \in A_1,b_1 \in B_1} 
	\;\;\;\tan^{-1}  ( b_1/a_1) 
	+ \pi 
	& \mbox{ if } \overline{A}_1 < 0 
	\\
	\bigcup_{a_1 \in A_1,b_1 \in B_1} 
	-\tan^{-1} ( a_1/b_1) 
	+\pp
	& \mbox{ if } \underline{B}_1 > 0 
	\\
	\bigcup_{a_1 \in A_1,b_1 \in B_1} 
	-\tan^{-1} ( a_1/b_1) 
	-\pp 
	& \mbox{ if } \overline{B}_1 < 0 
	\\
	[-\pi,\pi]   & \mbox{ otherwise.}  
\end{cases} 
%%%% 
\end{align*}

			\item Rotate every Fourier coefficient's phase by $ - \Theta k$. 
			That is, define: 
			\begin{align*}
A_1' &:= \left[ \inf_{a_1 \in A_1 , b_1 \in B_1} \sqrt{a_1^2 + b_1^2}, \sup_{a_1 \in A_1 , b_1 \in B_1} \sqrt{a_1^2 + b_1^2}\right], 		& 	 B_1' := [0,0],
			\end{align*}
			and for $2 \leq  k \leq M$  define intervals $ A_k', B_k' \subseteq \R$ such that: 
			\begin{align*}
			A_k' &\supseteq 
			\bigcup_{\theta \in \Theta, a_k \in A_k, b_k \in B_k} \;\;\; \cos ( \theta k)  a_k + \sin( \theta k) b_k  
			\\ 
			B_k' &\supseteq
			\bigcup_{\theta \in \Theta, a_k \in A_k, b_k \in B_k} -\sin (\theta k)  a_k + \cos( \theta k) b_k.
			\end{align*}
			%% 3 %% 
			\item  
			Define a cube   $ X' := X_M' \times X_\infty' \subseteq \R^1 \times \Omega^S$ by 
			\begin{align*}
			X_{M}' 		&:=  I_\omega \times \prod_{k=1}^M A_k' \times B_k'  \\	
			X_\infty' 	&:= \left\{ c_k  \in \C : |c_k| \leq C_0 /k^S  \right\}_{k=M+1}^\infty .
			\end{align*}		
	\end{enumerate}

	\begin{proposition}
		\label{prop:TimeTranslation}
For an input cube $X $, let $ X'$ denote the output of Algorithm \ref{alg:TimeTranslate}. 
Suppose that $y:\R \to \R$ is a periodic function given as in   \eqref{eq:FourierEquation} with frequency and Fourier coefficients satisfying  $(\omega, \{c_k\}_{k=1}^\infty) \in X$. Then there exists some $ \tau \in \R $ such that the Fourier coefficients $c'$ of $ y(t+\tau)$ satisfy $(\omega,\{c'_k\}_{k=1}^\infty) \in X'$. Furthermore $c_1'$ is a real non-negative number.  
	\end{proposition}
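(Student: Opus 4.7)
The plan is to reduce the statement to a direct computation after recognizing that Algorithm \ref{alg:TimeTranslate} is exactly implementing a time translation in Fourier coordinates. If $y(t) = \sum_{k\in\Z} c_k e^{i\omega k t}$, then $y(t+\tau) = \sum_{k\in\Z} (c_k e^{i\omega k \tau}) e^{i\omega k t}$, so the new Fourier coefficients are $c_k' = c_k e^{i\omega k \tau}$. Setting $\theta := -\omega\tau$, this is $c_k' = c_k e^{-i\theta k}$, which matches the formulas in Step 2 of the algorithm once one expands $(a_k + ib_k)(\cos(\theta k) - i \sin(\theta k))$.

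Given $(\omega, \{c_k\}) \in X$ with $c_1 = a_1 + ib_1$, the first task is to pick $\theta$ so that $c_1' = |c_1|$ is real and non-negative, i.e., $\theta = \arg(c_1)$ (modulo $2\pi$). I would then verify that the interval $\Theta$ constructed in Step 1 always contains this particular $\theta$, by checking each of the five cases. For instance, when $\underline{A}_1 > 0$ we have $a_1 > 0$ and $\arg(c_1) = \arctan(b_1/a_1) \in (-\pp, \pp)$, so the stated expression is a correct bound. When $\overline{A}_1 < 0$, the appropriate branch of $\arctan$ differs from the true argument by a multiple of $\pi$; since adding $\pi$ to $\theta$ multiplies $c_1'$ by $-1$, we may add $2\pi$ instead (or equivalently work modulo $2\pi$), which leaves $e^{-i\theta k}$ unchanged. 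The cases $\underline{B}_1 > 0$ and $\overline{B}_1 < 0$ are handled using $\cot(\phi) = a_1/b_1$, giving $\phi = \pp - \arctan(a_1/b_1)$. The degenerate case $0 \in A_1$ and $0 \in B_1$ is covered by the fallback $\Theta = [-\pi,\pi]$, which trivially contains $\arg(c_1)$.

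Once $\theta \in \Theta$ is established, I would set $\tau := -\theta/\omega$ and check that the Fourier coefficients $c_k'$ of $y(t+\tau)$ lie in $X'$. For $k = 1$, $c_1' = |c_1|$ is real and lies in $[\inf \sqrt{a_1^2+b_1^2},\, \sup \sqrt{a_1^2+b_1^2}] = A_1'$, while $B_1' = \{0\}$ by definition; this also gives the final claim that $c_1'$ is real and non-negative. For $2 \leq k \leq M$, the expansion $c_k' = e^{-i\theta k}(a_k + ib_k)$ gives exactly $\mathrm{Re}(c_k') = \cos(\theta k)a_k + \sin(\theta k)b_k$ and $\mathrm{Im}(c_k') = -\sin(\theta k)a_k + \cos(\theta k)b_k$, which lie in $A_k'$ and $B_k'$ by construction. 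For $k \geq M+1$, since rotation preserves modulus, $|c_k'| = |c_k| \leq C_0/k^s$, so $\{c_k'\}_{k \geq M+1} \in X_\infty'$. The frequency is unchanged by time translation, so $\omega \in I_\omega$ still holds.

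No single step here is hard — the content is really a case-by-case verification that the interval $\Theta$ is a valid enclosure of $\arg(c_1)$. The one place that requires a modest amount of care is the case analysis in Step 1, particularly tracking the ambiguity modulo $\pi$ in the formulas when $a_1 < 0$ or $b_1 < 0$, and confirming the $\pp$-shift produces a true argument (rather than differing by $\pi$, which would introduce a spurious sign change in $c_1'$). Everything else is rotational algebra and the observation that the bounds $|c_k| \leq C_0/k^s$ in $X_\infty$ are invariant under the phase rotation $c_k \mapsto c_k e^{-i\theta k}$.
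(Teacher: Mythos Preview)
Your proposal is correct and follows essentially the same approach as the paper: identify $\theta=\arg(c_1)$, verify $\theta\in\Theta$ via the arctan case analysis, set $\tau=-\theta/\omega$, and check that the rotated coefficients $c_k'=e^{-ik\theta}c_k$ land in the intervals $A_k'\times B_k'$ (with the tail bound preserved since rotation is modulus-preserving). If anything, you are slightly more careful than the paper in explicitly treating the $k\geq M+1$ case and the $2\pi$ ambiguity in the $\overline{A}_1<0$ branch.
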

	\begin{proof}
		We organize the proof into the steps of the algorithm.  		
		\begin{enumerate}
			%% 1 %%
			\item 
			Write the first Fourier coefficient of $y$ as $ c_1 = a_1  + i b_1 $. 
			We may write $c_1  = r e^{i \theta}$  where $r = \sqrt{a_1^2 + b_1^2}$ and if $c_1 \neq 0$, then  $\theta$ is unique up to an integer multiple of $ 2 \pi$. 
			By the rules for $\arctan$ we can calculate: 
			\begin{equation*}
			\theta = 
			\begin{cases}
			\;\;\;\tan^{-1} (b_1 / a_1)  			& \mbox{ if } a_1 > 0 \\
			\;\;\;\tan^{-1} (b_1 / a_1)  + \pi 		& \mbox{ if } a_1 < 0 \\
			-\tan^{-1} (a_1 / b_1)  +\pp			& \mbox{ if } b_1 > 0 \\
			-\tan^{-1} (a_1 /b_1)  -\pp				& \mbox{ if } b_1 < 0. 
			\end{cases}
			\end{equation*}
			Since  $a_1 \in A_1 $ and $b_1 \in B_1 $, it follows that $ \theta \in \Theta$. 
			%% 2 %%
			\item 
			For any $ \tau$ we can calculate the Fourier series of $ y(t + \tau)$ as follows: 
			\begin{align*}
			y(t+\tau)  = \sum_{k \in \Z} c_{k} e^{ i \omega k (t + \tau)} 
			=  \sum_{k \in \Z} c_{k}e^{ i \omega k \tau}  e^{ i \omega k t}.
			\end{align*}
			If we choose $ \tau = - \theta / \omega$, then   $ c_1' = c_1 e^{i \omega \tau} = \sqrt{a_1^2+b_1^2}$ is a real, non-negative number  and moreover $ c_1' \in [X']_1$.  
			%% 3 %% 
			\item  
			The Fourier coefficients of $ y(t + \tau)$ are given by $ c'_k = e^{-ik\theta} c_k$, hence $(\omega, \{c'_k\}_{k=1}^\infty) \in X'$. 
		\end{enumerate}
	\end{proof}

	%%%%%%%%%%%%%%%%%%%%%%%%%%
	\subsection{Bounds for Wright's Equation}
		\label{sec:FourierProjWright}

	The culmination of this subsection is  Algorithm 5.7 which, for a given range of parameters, constructs a collection of cubes covering the solution space to $ F_\alpha = 0$. 
	This algorithm begins with pointwise bounds on SOPS to \eqref{eq:Wright}. 
	To obtain these pointwise bounds, we use the results from \cite{jlm2016Floquet}. 
	One of the results \cite{jlm2016Floquet} achieves is, for a given range of parameters $I_\alpha$, it produces a collection of bounding functions $\cX$, such that if there is a SOPS to the exponential version of Wright’s equation at parameter $ \alpha \in I_\alpha$, then it will be bounded by one of the bounding functions in $\cX$. 
	Recall that solutions to the  exponential version of Wright’s equation solve  \eqref{eq:MNF} where $f(x) = e^x -1$, and can be transformed into the quadratic version of Wright's equation   \eqref{eq:Wright} using the change of variable $y = e^x -1$.

	As this is a computational result, it requires the selection of several computational parameters which, while immaterial to the proof, are necessary for implementation. 
	We describe them here with a brief description of \cite[ Algorithm 5.1]{jlm2016Floquet}. 
	To begin, this algorithm starts off with \emph{a priori} estimates, some of which are iteratively constructed, and require a selection of parameters  $ i_0, j_0  \in \N$. 
	These are used to construct numerical bounding functions having time resolution $ n_{Time} \in \N$. 
	A pruning operator is defined on these bounding functions, and the spacing between the zeros of a SOPS, and the parameter $N_{Period} \in \N$ defines how many times this pruning operator is applied in this initial construction of the bounding functions. 
	Then a branch and prune algorithm is executed, with a stopping criterion defined by the parameters $
	\epsilon_1,\epsilon_2  \in \R$. 
	We formally state the results of this algorithm below:	
	\begin{theorem}[See Theorem 5.2 in \cite{jlm2016Floquet}] 
		\label{prop:APbounds}
		Fix some $ I_\alpha = [ \alpha_{min }, \alpha_{max}]$ such that  $ \alpha_{min} \geq  \pp$. 
		Suppose that $x:\R \to \R $ is periodic with period $L$, and is a SOPS to \eqref{eq:MNF} at parameter $ \alpha \in I_\alpha$ with $ f(x) = e^x -1$.  
		Furthermore, assume without loss of generality that $ x(0) = 0 $ and $ x'(0) > 0$.

		If $\cL$ and $\cX$ denote the output of \cite[ Algorithm 5.1]{jlm2016Floquet} ran with input $I_\alpha$, then there exists some  $[\underline{L}_i, \overline{L}_i]\in \cL$ and $\chi_i \in \cX$ for which $L \in [\underline{L}_i, \overline{L}_i]  $ and $ x(t) \in \chi_i(t)$ for all $t$.    
	\end{theorem}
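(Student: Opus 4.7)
Since this theorem is a restatement of Theorem 5.2 of \cite{jlm2016Floquet}, the plan is to follow the rigorous-numerics strategy developed there. The goal is to show that a suitable branch-and-prune algorithm outputs a finite family of interval-valued bounding functions which, between them, contain every SOPS of the exponential version of \eqref{eq:MNF} for $\alpha \in I_\alpha$. Importantly, the proposal is \emph{not} a proof of existence or uniqueness of any particular SOPS; it only asserts that \emph{if} a SOPS exists at parameter $\alpha \in I_\alpha$, then it must lie inside one of the output envelopes $\chi_i$ with period in one of the intervals $[\underline{L}_i,\overline{L}_i]$.

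The approach would proceed in three layers. First, I would establish \emph{a priori} pointwise bounds on any SOPS uniformly over $I_\alpha$. Classical results of Wright give a crude envelope: any SOPS satisfies $x(t) \in [-\alpha_{\max}, \alpha_{\max}]$ after the initial transient; the spacing between consecutive zeros lies in an explicit range depending on $\alpha$; and monotonicity between zeros follows from the slowly-oscillating hypothesis. Iterating these estimates $i_0$ (or $j_0$) times produces a valid, though coarse, bounding function $\chi_0$ and period interval $[\underline{L}_0, \overline{L}_0]$, uniformly in $\alpha \in I_\alpha$. This layer is mostly bookkeeping and relies only on analytic facts about \eqref{eq:MNF}.

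The central step is a \emph{pruning operator} on piecewise-constant interval-valued functions with time resolution $n_{\text{Time}}$. Given a candidate bounding function $\chi$ on $[0, \overline{L}]$, the delay equation yields the inclusion $x'(t) \in -\alpha\bigl(e^{\chi(t-1)} - 1\bigr)$ for any SOPS $x$ enclosed by $\chi$. Integrating forward from $x(0)=0$ using outward-rounded interval arithmetic produces a refined envelope $\chi'$, and $\chi \cap \chi'$ is again a valid bounding function. Iterating this $N_{\text{Period}}$ times sharpens the candidate considerably. The hard part will be proving that the pruning operator contracts sufficiently fast to terminate within the tolerances $\epsilon_1,\epsilon_2$ uniformly in $\alpha$; in practice the contraction can stall when $\chi$ is too loose or when $\alpha$ ranges over too wide an interval, which motivates the final layer.

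The final layer is a branch-and-bound loop. A candidate $(I_\alpha,\chi,[\underline L,\overline L])$ is pruned; if the result is empty the candidate is discarded (no SOPS can exist in that cell), if the tolerances are met the candidate is appended to the output lists $\cL$ and $\cX$, and otherwise the cell is bisected (in $\alpha$, in the period, or by subdividing one interval of values of $\chi$) and the pieces re-enqueued. Correctness of the output reduces to two observations: (i) pruning and bisection never remove a genuine SOPS, so at every moment any SOPS $x$ at $\alpha \in I_\alpha$ lies in some surviving cell, and an induction on the bisection tree places $x$ in one of the final $(\chi_i, [\underline L_i, \overline L_i])$; (ii) only finitely many cells ever appear because the algorithm terminates once tolerances are met on every branch. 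The latter is not proved a priori but is verified \emph{a posteriori} upon successful termination on a computer, in the spirit of all the \emph{a posteriori} results of this paper.
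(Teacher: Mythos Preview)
The paper does not actually prove this theorem at all: it is stated with the attribution ``See Theorem 5.2 in \cite{jlm2016Floquet}'' and is used as a black box imported from that reference. So there is no proof in the present paper to compare your proposal against; the paper's ``proof'' is simply the citation.

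Your proposal is a reasonable high-level sketch of the strategy carried out in \cite{jlm2016Floquet} (a priori bounds, a pruning operator based on forward integration of the delay equation with interval arithmetic, and a branch-and-prune loop with a posteriori verification of termination), and it correctly identifies that the theorem is an enclosure statement rather than an existence/uniqueness statement. As a summary of the cited argument it is fine, though it remains a sketch: the genuinely delicate parts --- the explicit construction of the initial bounding functions from the a priori estimates, the precise formulation of the pruning operator on the product of the bounding function and the zero-spacing data, and the correctness proof that every pruning and bisection step preserves all SOPS --- are exactly the content of \cite{jlm2016Floquet} and cannot be reproduced in a paragraph. For the purposes of the present paper, however, none of this is needed beyond the citation.
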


In \cite{jlm2016Floquet} the authors applied this algorithm to prove there is a unique SOPS for $ \alpha \in [1.9,6.0]$. 
However, one of the shortcomings of this algorithm  is that it has difficulty discarding low amplitude solutions near the Hopf bifurcation at $ \alpha = \pp$.  
To remedy this, we modify the pruning operator in \cite{jlm2016Floquet } with the addition of the following Proposition \ref{prop:PruningMod}. This allows for a new way to potentially conclude that a given bounding function cannot contain any SOPS. 
\begin{proposition}
	\label{prop:PruningMod}
	If $y$ is a nontrivial periodic solution to \eqref{eq:Wright} at parameter $ \alpha \in ( 0 , 2]$  and frequency $\omega \geq 1.1$, then:  
	\begin{equation*}
	\sup |y(t)| >   -\tfrac{1}{2} + \tfrac{1}{2}\sqrt{1 + \tfrac{4 \sqrt{3} \omega}{\pi \alpha } g(\alpha, \omega) }  .
	\end{equation*}
\end{proposition}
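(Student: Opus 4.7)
The plan is to chain together three estimates to relate the sup-norm of $y$ to $g(\alpha,\omega)$: (i) an \emph{a priori} pointwise bound on $y'(t)$ directly from the differential equation, (ii) a Cauchy--Schwarz argument converting $\|y'\|_{L^2}$ into a bound on $\|c\|_{\ell^1}$, and (iii) the lower bound $g(\alpha,\omega) \leq \|c\|_{\ell^1}$ from Lemma \ref{prop:zeroneighborhood2}.

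First, set $M := \sup_t|y(t)|$. Since $y$ is a nontrivial periodic solution of \eqref{eq:Wright}, writing $(1+y)' = -\alpha y(t-1)(1+y)$ and integrating shows $1 + y(t)$ never changes sign; thus $y(t) > -1$ for all $t$, and $|1+y(t)| = 1 + y(t) \leq 1 + M$. Reading off \eqref{eq:Wright},
\[
\|y'\|_\infty \leq \alpha M (1 + M).
\]

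Second, with period $L = 2\pi/\omega$ and using the convention $c_{-k} = c_k^*$ and $c_0 = 0$, Parseval's identity gives $\|y'\|_{L^2[0,L]}^2 = 2L\omega^2 \sum_{k=1}^\infty k^2|c_k|^2$, and the trivial estimate $\|y'\|_{L^2}^2 \leq L\|y'\|_\infty^2$. Applying Cauchy--Schwarz with the weight $1/k$ yields
\[
\tfrac{1}{2}\|c\|_{\ell^1} \;=\; \sum_{k=1}^\infty \tfrac{1}{k}\cdot k|c_k| \;\leq\; \Bigl(\sum_{k=1}^\infty \tfrac{1}{k^2}\Bigr)^{1/2}\Bigl(\sum_{k=1}^\infty k^2|c_k|^2\Bigr)^{1/2} \;=\; \tfrac{\pi}{\sqrt{6}} \cdot \tfrac{\|y'\|_{L^2}}{\sqrt{2L}\,\omega}.
\]
Combining the two bounds gives
\[
\|c\|_{\ell^1} \;\leq\; \frac{\pi\,\|y'\|_\infty}{\sqrt{3}\,\omega} \;\leq\; \frac{\pi\,\alpha\, M(1+M)}{\sqrt{3}\,\omega}.
\]

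Third, since $c \not\equiv 0$, Lemma \ref{prop:zeroneighborhood2} (applicable because $\alpha\in(0,2]$, $\omega\geq 1.1$) gives $g(\alpha,\omega) \leq \|c\|_{\ell^1}$. Concatenating,
\[
g(\alpha,\omega) \;\leq\; \frac{\pi\alpha\,M(1+M)}{\sqrt{3}\,\omega},
\qquad\text{i.e.,}\qquad M^2 + M \;\geq\; \frac{\sqrt{3}\,\omega}{\pi\alpha}\,g(\alpha,\omega).
\]
Solving this quadratic for $M \geq 0$ yields the claim. The strictness of the final inequality follows from the Cauchy--Schwarz step: equality would require $|c_k|\propto 1/k$, which is not summable, so the inequality is strict for every nontrivial $c \in \ell^1$ (this is also immediate from real-analyticity of periodic solutions to Wright's equation \cite{nussbaum-analytic}, which forces exponential decay of $|c_k|$).

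The main ``work'' is really just bookkeeping — the one step that deserves care is justifying the strict inequality, since each of the three constituent inequalities is a priori only non-strict; as noted, the Cauchy--Schwarz step supplies the strict bound. Everything else is a short calculation: pointwise estimate from the ODE, a Parseval/Cauchy--Schwarz combination, and the invocation of Lemma \ref{prop:zeroneighborhood2}.
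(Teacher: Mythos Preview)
Your proof is correct and follows essentially the same route as the paper's: bound $\|c\|_{\ell^1}$ above by $\tfrac{\pi}{\sqrt{3}\,\omega}\alpha M(1+M)$, bound it below by $g(\alpha,\omega)$ via Lemma~\ref{prop:zeroneighborhood2}, and solve the resulting quadratic in $M$. The paper simply cites \cite[Lemma~4.1]{BergJaquette} for the inequality $\|c\|_{\ell^1}\le \tfrac{\pi}{\sqrt{3}\,\omega}\|y'\|_\infty$, whereas you reprove it via Parseval and Cauchy--Schwarz; you are also more careful than the paper's own argument about the strictness of the final inequality.
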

\begin{proof}
	Define $ M := \sup |y(t)|$. 
	From \cite[Lemma 4.1]{BergJaquette}  we know that if $F(\alpha,\omega,c)=0$,  then:
	\begin{align*}
	\| c \|_{\ell^1} 
	\leq  \frac{\pi}{\omega \sqrt{3}} \| y'\|_{\infty} 
	\leq \frac{\pi}{\omega \sqrt{3}} \alpha M (1 + M).
	\end{align*}
	From  Lemma \ref{prop:zeroneighborhood2}, the only solutions satisfying $ \| c\|_{\ell^1} < g(\alpha , \omega)$ are trivial. 
	Hence $(\alpha, \omega,c)$ would only be a trivial solution at best if the following inequality is satisfied: 
	\begin{equation*}
\| c\|_{\ell^1} \leq 	\frac{\pi}{\omega \sqrt{3}} \alpha M(1+M) <    g(\alpha ,\omega).
	\end{equation*}
	Solving the  quadratic equation $M^2 +M -   \frac{\omega \sqrt{3}}{\pi \alpha } g(\alpha ,\omega) <0$  produces the desired inequality. 
\end{proof}

The higher derivatives of a function can be very useful in constructing bounds on its Fourier coefficients and their rate of decay. 
While the bounding functions constructed in \cite{jlm2016Floquet} are not even continuous, we can use them to construct bounding functions for the derivative of SOPS to Wright’s equation via a bootstrapping argument. 
Namely, by taking a derivative on both sides of  \eqref{eq:Wright} we obtain an equation for the second  derivative of solutions to  \eqref{eq:Wright}. 
In a similar manner, can obtain an expression for the third derivative of solutions to \eqref{eq:Wright}, both of which are presented below:
\begin{align*}
y''(t) &= -\alpha 
\left[  y'(t-1) \left[1 + y(t) \right] +
y(t-1) y'(t)\right]  \\
y'''(t) &= - \alpha  \left[ y''(t-1) [1+y(t) ] + 2 y'(t-1)y'(t) + y(t-1) y''(t) \right] .
\end{align*}
Note that we can always express the derivative $ y^{(s)}(t)$ in terms of  $ y^{(r)}(t) $ and $ y^{(r)}(t-1) $ where $ 0 \leq r \leq s-1$. 
That is, we can inductively define functions $ f^s : \R^{2s} \to \R$ such that for all $t$ we have: 
\begin{align}
	y^{(s)}(t) = f^s\left(  y(t),y(t-1), y'(t),y'(t-1),\dots,  y^{(s-1)}(t),y^{(s-1)}(t-1)  \right). 
	\label{eq:Bootstrap}
\end{align}
If we start with a bounding function for $y$, then by appropriately adding and multiplying the bounding functions for $y^{(r)}$, taking wider bounds whenever necessary, we can obtain bounding functions for any derivative of $y$ (see for example Figure \ref{fig:FourierDerivativeProjections}).

Algorithm \ref{alg:Comprehensive} proceeds by first constructing 
bounding functions for $y$ and its derivatives, and then applying  Algorithm \ref{alg:FourierProjection} to obtain a cube containing its Fourier coefficients. 
Then it applies Algorithm \ref{alg:TimeTranslate} to impose the phase condition that $ c_1 = c_1^*$. 
\begin{figure}[h]
	\centering
	\includegraphics[width = .224\textwidth]{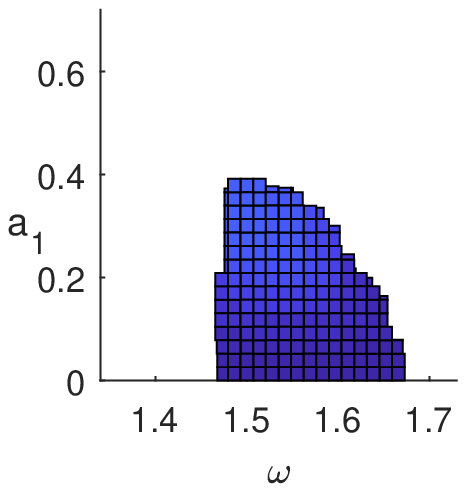}
	\includegraphics[width = .224\textwidth]{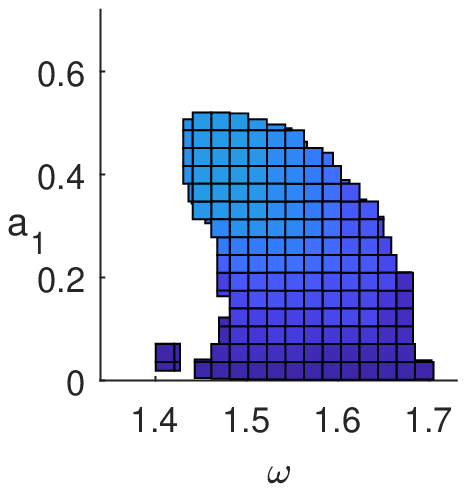} 
	\includegraphics[width = .224\textwidth]{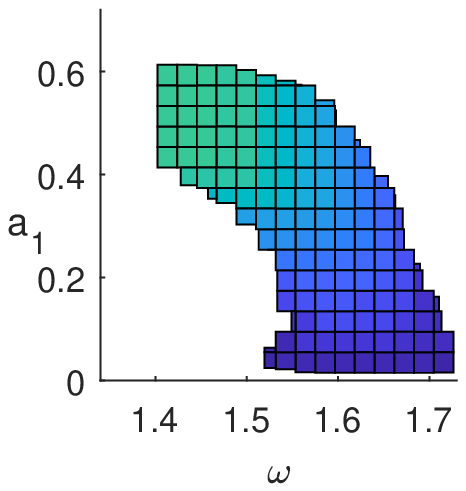}
	\includegraphics[width = .3025\textwidth]{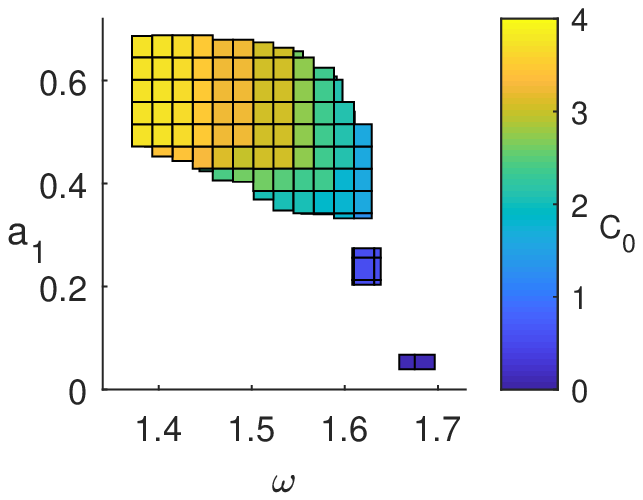}
	\caption{ \footnotesize
		Depicted above is the output of Algorithm \ref{alg:Comprehensive}  projected onto the  $(\omega,a_1)$ plane. 
		From left to right, the input $I_\alpha$ was taken to be $[\pp,1.6]$, $[1.6,1.7]$, $[1.7,1.8]$, and $[1.8,1.9]$. 
		Note that $C_0$ increases with $\alpha$, $a_1$, and period length $2 \pi / \omega$. }
	\label{fig:InitialPrep}
\end{figure}
In this manner  we obtain a collection of cubes which contains all of the Fourier coefficients to SOPS to \eqref{eq:Wright}.
We then apply Algorithm \ref{alg:Prune} to each cube,  discarding it if possible. 
This allows us to discard between 5\% and 60\% of cubes (see $N_{grid}'$ in Table \ref{table:RunTimes}). 

One problem however, is that the Fourier projection of two distinct bounding functions often overlap considerably. 
To address this we combine overlapping cubes together. 
While we could combine all of our cubes into one big cube, this would not be efficient. 
Instead, we  divide our cover along a grid in the $ \omega\times a_1$ plane (see Figure \ref{fig:InitialPrep}).

\begin{algorithm} 
	\label{alg:Comprehensive}
	Fix an interval of $ I_{\alpha} \subseteq [\alpha_{min},\alpha_{max}]$, integers  $M,S \in \N$  and a subdivision number $ N \in \N$, and the computational parameters for  \cite[Algorithm 5.1]{jlm2016Floquet}. The output is a (finite) collection of cubes $\cS = \{ X_i \subseteq \R ^2 \times \tilde{\oo}^s  \}$. 
	\begin{enumerate}
		%% 1 %%
		\item Let $ \cX,\cL $ be the output of \cite[Algorithm 5.1]{jlm2016Floquet} with input $I_\alpha$ and appropriate computational parameters. 
		%% 2 %% 
		\item  
		Use the change of variables $ y = e^x -1$ to  define a collection of  functions:
		$$ \cY^{0} :=  \left\{ Y_i(t) = [ e^{\ell_i(t) } -1, e^{u_i(t) } -1]: \chi_i = [ \ell_i(t) , u_i(t)]  \in \cX \right\} .$$
		%% 3 %% 
		\item 
		Inductively define $ \cY^s$ for $ 1 \leq s \leq S$ so that corresponding to each $ Y_i^0 \in \cY^0$ there exists a 		$Y_i^s = [\underline{Y_i^s},\overline{Y_i^s}] \in \cY^s$   such that for $f^s$ defined in 		\eqref{eq:Bootstrap} we have: 
		\begin{align*}
		\underline{Y_i^s}(t) \leq& \inf_{\{y^{r}\}_{r=0}^{s-1} \in \{Y_i^r\}_{r=0}^{s-1}} 
		f^s\left(  y^0(t),y^0(t-1), \dots,  y^{s-1}(t),y^{s-1}(t-1) \right) \\
		\overline{Y_i^s}(t) \geq& \sup_{\{y^{r}\}_{r=0}^{s-1} \in \{Y_i^r\}_{r=0}^{s-1}} 
		f^s\left(  y^0(t),y^0(t-1), \dots,  y^{s-1}(t),y^{s-1}(t-1) \right).
		\end{align*}		
 
		%% 4 %%	
		\item Define $\cS' := \{ X_i' \subseteq \R^1 \times \oo^s   \}$ to be the collective output of Algorithm  \ref{alg:FourierProjection} run with $M \in \N$, and each of the sets $ L_i \in \cL$ and  $\{Y_i^{s}\}_{s=0}^S \in \{\cY^{s} \}_{s=0}^S $ as input.  
		%% 5 %%
		\item  Define $\cS'' := \{ X_i'' \subseteq \R ^1 \times \tilde{\oo}^s\}$ to be the collective output of Algorithm \ref{alg:TimeTranslate} run with each of the sets $ X_i' \in \cS'$ as input. 
		%% 6 %% 
		\item 
		Define $\cS'''$ by taking the product of $ I_\alpha$ with the cubes in $ \cS''$. That is, define  $ \cS ''':= \{ I_\alpha \times X_i'' \subseteq \R ^2 \times \tilde{\oo}^s : X_i'' \in \cS''\}$.
		%% 7 %%	
		\item For  each $ X \in \cS'''$, let $ \{flag,X'\}$ denote the output of Algorithm \ref{alg:Prune} with input $X$. 
		If $flag=1$, then remove $X$ from $ \cS'''$. 
		Otherwise replace $X$ by $X'$.  
		%% 8 %%	
		\item  Subdivide the $ \omega \times a_1$ space covered by $ \cS'''$ into an $N \times N$ grid. 
		That is, define an index set $ B := \{ 1 ,2 , \dots, N\} \times \{1, 2, \dots,   N\}$ and define intervals $ I^\omega , I^{a_1} \subseteq \R$ so that: 
		\begin{align*}
		I^\omega 
		&\supseteq \bigcup_{X \in \cS'''} \pi_\omega(X),&
		 I^{a_1} 
		&\supseteq \bigcup_{X \in \cS'''} \pi_{a_1}(X).
		\end{align*}
		Subdivide $I^\omega$ and $I^{a_1}$ into $N$  subintervals of equal width,  $\{I^{\omega}_i\}_{i=1}^N$  and $\{I^{a_1}_i\}_{i=1}^N$, so that 
		$I^\omega  =  \bigcup_{i=1}^N I^\omega_{i} $ and $I^{a_1}  =  \bigcup_{i=1}^N I^{a_1}_{i} $.  
		%% 9 %%
		\item 
		For each $ \beta =(\beta_1 , \beta_2) \in B$, take the union of cubes in $ \cS'''$ whose $ (\omega ,a_1)$--projection intersects $ I^\omega_{\beta_1} \times I^{a_1}_{\beta_2}$. That is, define: 
		\begin{align*}
		\tilde{X}_\beta &:= \{ (\alpha ,  \omega , c) \in \R^2 \times  \tilde{\oo}^s : \omega \in  I^\omega_{\beta_1}, [c]_1 \in I^{a_1}_{\beta_2}  \} ,
		\end{align*}
		and define $ X_\beta$ to be a cube such that: 
		\begin{align*}
			X_\beta &\supseteq \bigcup_{X \in \cS'''} X \cap \tilde{X}_\beta.
		\end{align*}
		%% 10 %%
		\item  
		Define $ \cS := \{ X_{\beta} : \beta \in B\}$.
	\end{enumerate}
\end{algorithm}

\begin{theorem}
	\label{prop:Comprehensive}
	Fix an interval  $ I_{\alpha} = [\alpha_{min},\alpha_{max}]$ such that $ \alpha_{min} \geq \pp$, and let $\cS$ denote the output of Algorithm \ref{alg:Comprehensive}. 
	If a function $ y$ as given in \eqref{eq:FourierEquation} is a SOPS to Wright's equation at $ \alpha \in I_\alpha$, then there exists a time translation so that its Fourier coefficients are in $\bigcup \cS$. 
\end{theorem}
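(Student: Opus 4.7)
The plan is to trace a hypothetical SOPS $y$ of Wright's equation through each step of Algorithm \ref{alg:Comprehensive} and verify that an appropriate time translate has its Fourier data captured in the output collection $\cS$. First I would convert $y$ to the exponential form by setting $x = \ln(1+y)$, which is a SOPS to \eqref{eq:MNF} with $f(x) = e^x - 1$ at the same parameter $\alpha \in I_\alpha$, and translate time so that $x(0) = 0$ and $x'(0) > 0$. Then Theorem \ref{prop:APbounds} applied in Step 1 produces some $\chi_i \in \cX$ and period interval $L_i \in \cL$ such that $x(t) \in \chi_i(t)$ for all $t$ and $L \in L_i$, where $L$ is the period of $x$.

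Next I would invert the change of variables: by Step 2, the bounds on $x$ become pointwise bounds $y(t) \in Y_i^0(t)$ for the corresponding $Y_i^0 \in \cY^0$. The bootstrapping in Step 3 then yields $y^{(s)}(t) \in Y_i^s(t)$ for $1 \leq s \leq S$; this follows by induction on $s$, where at each step the representation \eqref{eq:Bootstrap} shows that $y^{(s)}(t)$ is a fixed polynomial expression $f^s$ in the already-bounded lower derivatives $y^{(r)}(t)$ and $y^{(r)}(t-1)$ for $r < s$, and the infima/suprema in Step 3 are precisely computed over the bounding sets containing these lower derivatives.

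With pointwise bounds on $y$ and its first $S$ derivatives, Proposition \ref{prop:FourierProjection} applied in Step 4 yields a cube $X_i' \in \cS'$ in $\R^1 \times \oo^S$ containing the frequency and Fourier sequence of $y$. Applying Proposition \ref{prop:TimeTranslation} in Step 5 produces a further time shift by some $\tau \in \R$ such that the Fourier coefficients of $y(t+\tau)$ satisfy the phase condition $c_1 = c_1^*$ and lie in a cube $X_i'' \in \cS''$; composing this with the initial translation gives a single time translation of the original SOPS. Taking the product with $I_\alpha$ in Step 6 places the full $(\alpha,\omega,c)$ data for the time-translated SOPS into some cube $X \in \cS'''$.

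Finally I would check that the pruning and regridding in Steps 7--9 preserve the solution. By Theorem \ref{prop:Prune}(iv), if $X \in \cS'''$ contains a zero of $F$, then either $X$ is replaced by a cube $X' \subseteq X$ still containing that zero, or $flag \neq 1$ is returned, so the solution is never discarded in Step 7. For the regridding in Steps 8--9, the $(\omega,a_1)$-coordinates of the time-translated solution lie in some cell $I^\omega_{\beta_1} \times I^{a_1}_{\beta_2}$ since these grids cover the entire $(\omega,a_1)$-projection of $\cS'''$ by construction. Hence $X \cap \tilde{X}_\beta$ is nonempty and contains the Fourier data, and this is contained in $X_\beta \subseteq \bigcup \cS$. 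The main technical content lies in the bootstrapping of Step 3, since one must carefully propagate pointwise bounds through the nonlinear recursion \eqref{eq:Bootstrap} using interval arithmetic without losing too much width; everything else is a bookkeeping composition of the already-proven Propositions \ref{prop:FourierProjection}, \ref{prop:TimeTranslation}, and Theorem \ref{prop:Prune}.
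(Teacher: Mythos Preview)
Your proposal is correct and follows essentially the same step-by-step structure as the paper's proof: invoke Theorem~\ref{prop:APbounds} for the exponential form, undo the change of variables, bootstrap derivative bounds via \eqref{eq:Bootstrap}, apply Propositions~\ref{prop:FourierProjection} and~\ref{prop:TimeTranslation}, take the product with $I_\alpha$, and then check that pruning (via Theorem~\ref{prop:Prune}) and regridding preserve the solution. Your remark that the ``main technical content'' is in Step~3's bootstrapping slightly overstates things---correctness there is just set inclusion, and the width concern is computational rather than logical---but this does not affect the validity of the argument.
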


\begin{proof}
	Every SOPS $y$ to the quadratic version of Wright's equation given in \eqref{eq:Wright}  corresponds to a SOPS $x$ to the exponential version of Wright's equation  given in \eqref{eq:MNF} with $ f(x) = e^x -1$.  
	Fix  a SOPS $x : \R \to \R$  to the exponential version of Wright's equation with period $L$.   
We organize the proof into the steps of the algorithm. 
	\begin{enumerate}
		%% 1 %%
		\item By Theorem \ref{prop:APbounds} there exists an interval $ L_i \in \cL$ and a bounding function $\chi_i \in \cX$ and  such that $L \in L_i$ and  $ x(t) \in \chi_i(t)$ for all $ t \in \R $. 
		%% 2 %%		
		\item The change of variables between the exponential and quadratic versions of Wright's equation is given by $y = e^x -1$. 	
		Hence for the interval $ L_i \in \cL$ and the bounding function $Y_i \in \cY^{0}$, it follows that $L \in L_i$ and  $ y(t) \in Y_i(t)$ for all $ t \in \R $. 
		%% 3 %% 
		\item 
		Since $y \in Y_i^0$ it follows that its derivatives satisfy $ y^{(s)} \in Y_i^{s}$ for all $ 0 \leq s \leq S$.  
		%% 4 %%
		\item 
		Let $\omega$ and $ c$ denote the frequency and Fourier coefficients of $y$ respectively. 
		If $X_i'$ is the output of Algorithm \ref{alg:FourierProjection} with input $M \in \N$, $L_i$ and $ \{ Y_i^{s} \}_{s=0}^{S}$, then 
		by Proposition \ref{prop:FourierProjection} it follows that $(\omega, \{c_k\}_{k=1}^\infty ) \in X_i'$.  
		%% 5 %%
		\item 
		Let $ X_i''$ denote the output of Algorithm \ref{alg:TimeTranslate} with input $X_i'$. 
		By Theorem \ref{prop:TimeTranslation}, there exists a $ \tau \in \R$ such that the Fourier coefficients $ c'$ of $ y(t+\tau)$ satisfy  $(\omega, \{c'_k\}_{k=1}^\infty ) \in X_i''$.  
		%% 6 %%
		\item 
		We have shown that if $y$ is a SOPS to \eqref{eq:Wright} 
		at parameter $ \alpha$ having frequency $ \omega$, then up to a time translation $( \alpha , \omega , c) \in \bigcup \cS'''$.  
		By Proposition 
		\ref{prop:Equivalence}  the SOPS to \eqref{eq:Wright}  at parameter $\alpha \in I_\alpha$ correspond to the non-trivial zeros of $F$ in $\bigcup \cS'''$.
		Hence, if there is a solution $ F(\hat{x}) =0$ for some $ x \in \R^2 \times \tilde{\oo}^s$ with $ \pi_\alpha (\hat{x})\in I_\alpha$, then $ \hat{x} \in \bigcup \cS'''$. 
		%% 7 %%
		\item 
		Let $ \{flag,X_i^{(4)}\}$ denote the output of Algorithm \ref{alg:Prune} with input $X_i''' \in \cS'''$.
		By Theorem \ref{prop:Prune} we can replace each $X''' \in \cS'''$ with $X_i^{(4)}$, and it will still be the case that $\bigcup \cS'''$ contains all of the solutions to $ F=0$. 
		In particular, if $ flag=1$ then $X_i^{(4)} = \emptyset$ and we may remove $X_i'''$ in this case.  
		%% 8 %%
		\item  If $(\alpha,\omega,c) \in \bigcup \cS'''$ and $ a_1 = [c]_1$, then by construction $ \omega \in I^\omega$ and $ a_1 \in I^{a_1}$.  
		As $I^\omega \times  I^{a_1} =  \bigcup_{(\beta_1,\beta_2) \in B} I^\omega_{\beta_1} \times  I^{a_1}_{\beta_2} $, then there is some $ (\beta_1, \beta_2) \in B$ such that $ (\omega,a_1) \in I^\omega_{\beta_1} \times I^{a_1}_{\beta_2}$. 
		%% 9 %%
		\item
		As $ \bigcup_{X \in \cS'''} X \subseteq \bigcup_{\beta \in B} \tilde{X}_\beta$, then it follows that  $\bigcup_{X \in \cS'''} X \subseteq \bigcup_{\beta \in B} X_\beta $. That is to say  $\bigcup  \cS''' \subseteq \bigcup \cS$. 
		%% 10 %%
		\item  
		Hence, $\bigcup \cS$ contains the Fourier coefficients of any possible SOPS. 
	\end{enumerate}
\end{proof}

%%%%%%%%%%%%%%%%%%%%%%%%%%%%%%%%%%%%%%%%%%%%%%%%%%%%%%%%%%%%%%%%%%%%%%%%%%%%%%%%%%%%%%%%%%%%%%%%%%%
%%%%%%%%%%%%%%%%%%%%%%%%%%%%%%%%%%%%%%%%%%%%%%%%%%%%%%%%%%%%%%%%%%%%%%%%%%%%%%%%%%%%%%%%%%%%%%%%%%%
%%%%%%%%%%%%%%%%%%%%%%%%%%%%%%%%%%%%%%%%%%%%%%%%%%%%%%%%%%%%%%%%%%%%%%%%%%%%%%%%%%%%%%%%%%%%%%%%%%%

%%%%%%%%%%%%%%%%%%%%%%%%%%%%%%%%%%%%%%%%%
%%%%%%%%    Branch and Bound   %%%%%%%%%%%
%%%%%%%%%%%%%%%%%%%%%%%%%%%%%%%%%%%%%%%%%

%%%%%%%%%%%%%%%%%%%%%%%%%%%%%%%%%%%%%%%%%%%%%%%%%%%%%%%%%%%%%%%%%%%%%%%%%%%%%%%%%%%%%%%%%%%%%%%%%%%
%%%%%%%%%%%%%%%%%%%%%%%%%%%%%%%%%%%%%%%%%%%%%%%%%%%%%%%%%%%%%%%%%%%%%%%%%%%%%%%%%%%%%%%%%%%%%%%%%%%
%%%%%%%%%%%%%%%%%%%%%%%%%%%%%%%%%%%%%%%%%%%%%%%%%%%%%%%%%%%%%%%%%%%%%%%%%%%%%%%%%%%%%%%%%%%%%%%%%%%

\section{Global Algorithm}

\label{sec:GlobalAlgorithm}

After Algorithm \ref{alg:Comprehensive} has constructed a collection of cubes $\cS$ covering the solution space to $F=0$, we run a branch and  prune  algorithm. 
This algorithm iteratively inspects the elements in $X \in \cS$ and then constructs three new lists of cubes: $\cA$, $\cB$ and $\cR$.   
To summarize, first we compute the output $Prune(X) = \{flag,X'\}$  from Algorithm \ref{alg:Prune}. 
If $flag =1$, then there are no solutions in $X$, and we can remove $X$ from $\cS$.  
If $flag =2$, then the cube is in the neighborhood of the Hopf bifurcation, and we add $X'$ to $ \cB$. 
If $flag =3$, then for all $\alpha \in \pi_\alpha (X)$ there exists a unique solution to $F_\alpha = 0 $ in $X'$, and  we add $X'$ to $ \cA$. 
If $X'$ is too small, then we add it to $ \cR$. 
If the Krawczyk operator appears to be  effective at reducing the size of the cube, then the pruning operation is performed again. 
Otherwise $X'$ is subdivided along  some lower dimension and the resulting pieces are added back to $\cS$.

The most obvious difference between our algorithm and the classical algorithm is that we are working in infinite dimensions. 
While we store $2M+1$ real valued coordinates in a given cube, as in \cite{galias2007infinite,day2013rigorous} the subdivision is  only performed along a subset of these dimensions.  
Choosing which dimension to subdivide along can greatly affect the efficiency of a branch and bound algorithm, and there are heuristic methods for optimizing this choice \cite{csendes1997subdivision}. 
However since we are finding all the zeros along a 1-parameter family of solutions, these branching methods are not entirely applicable.
To determine which dimension to subdivide we select the dimension with the largest weighted diameter. That is, for a collection of weights $ \{ \lambda_i \}_{i=0}^{d}$ we define:  
\begin{equation}
	w(X,i) := 
	\begin{cases}
	\lambda_i \cdot \mbox{diam}\left( \pi_\alpha (X )\right) & \mbox{ if } i=0, \\
	\lambda_i \cdot \mbox{diam}\left(\left[\tilde{\pi}_M'(X )\right]_i\right) & \mbox{ otherwise.} \\
	\end{cases}
\end{equation}

\begin{algorithm}[Branch \& Bound]
	\label{alg:BranchAndPrune}
	Take as input a collection of cubes $\cS = \{ X_i \subseteq \R ^2 \times \tilde{\oo}^s\}$ with  $ M\geq 5$ and $s >2$, and as computational parameters: a halting criteria $\epsilon>0 $, a continue-pruning criteria $ \delta \geq0$,  	a maximum subdivision dimension $0 \leq d \leq 2M$ and a set of weights $\{\lambda_i\}_{i=0}^{d}$.
		The output is three lists of cubes: $ \cA, \cB$ and $\cR$. 
	\begin{enumerate}
		%% 1 %%	
		\item If $\cS$ is empty, terminate the algorithm.
		%% 2 %%	
		\item Select an element $X \in \cS$ and remove $X$ from $\cS$.  
		%% 3 %%
		\item  
		Define $\{flag,X'\} = Prune(X)$ to be the output of Algorithm \ref{alg:Prune} with input $X$.
		%% 4 %% 
		\item If $flag=1$, then reject $X$ and GOTO Step 1.
		%% 5 %%	
		\item If $flag=2$, then add $X'$ to $\cB$ and GOTO Step 1.
		%% 6 %% 
		\item If $flag=3$, then add $ X'$ to $\cA$ and GOTO Step 1. 
		%% 7 %%	
		\item If $\max_{0\leq i\leq d} w(X',i) < \epsilon$, then add $X'$ to $\cR$ and GOTO Step 1. 
		%% 8 %%	
		\item Define $m=\lfloor d/2 \rfloor$. 
			If $(1+\delta ) < \frac{vol( \tilde{\pi}_m'(X))}{vol( \tilde{\pi}_m'(X') )}$,  then define $ X:= X'$ and GOTO Step 3.
		%% 9 %%	
		\item Subdivide $X'$ into two pieces, $X_1'$ and $X_2'$, along a dimension which maximizes $w(X',i)$, and so that $ X' = X_1' \cup X_2'$.   Add the two new cubes to $\cS$ and GOTO Step 1.
	\end{enumerate}
\end{algorithm}

\begin{theorem}  
	\label{prop:BnB}
		Let $ \cS = \{ X_i \subseteq \R ^2 \times \tilde{\oo}^s\}$ with $ M \geq 5$ and $ s>2$. 
	Let $\cA, \cB$ and $\cR$ be the output of  Algorithm \ref{alg:BranchAndPrune} run with input $\cS$ and various computational parameters. 
	\begin{enumerate}[(i)]
	%%%%	
	\item   If $F(\hat{x}) =0$  for some $ \hat{x} \in \bigcup \cS$, then $ \hat{x}  \in  \bigcup \cA \cup  \cB \cup  \cR$. 
	%%%% 
	\item For each $ X \in \cA$ and $ \alpha  \in \pi_\alpha(X)$,  there is a unique $ \hat{x}  = ( \alpha , \hat{\omega}_\alpha, \hat{c}_\alpha)\in X$ such that $ F(\hat{x}) =0$. 
	%%%% 
	\item For each $ X \in \cB$, if there is a solution $ \hat{x} \in  X$ to $F=0$, then $\hat{x}$ is on the principal branch. 
	\end{enumerate}
\end{theorem}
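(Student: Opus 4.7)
The plan is to prove all three statements simultaneously by maintaining a loop invariant on the state of Algorithm \ref{alg:BranchAndPrune}. At every point during the execution, let $\cS_{\text{cur}}$, $\cA_{\text{cur}}$, $\cB_{\text{cur}}$, $\cR_{\text{cur}}$ denote the current contents of the four collections. I would establish the invariant that
\[
\left\{ \hat{x} \in \textstyle\bigcup \cS : F(\hat{x}) = 0 \right\}
\; \subseteq \;
\textstyle\bigcup \cS_{\text{cur}} \;\cup\; \textstyle\bigcup \cA_{\text{cur}} \;\cup\; \textstyle\bigcup \cB_{\text{cur}} \;\cup\; \textstyle\bigcup \cR_{\text{cur}},
\]
together with the conditions that every cube in $\cA_{\text{cur}}$ was produced from a $flag=3$ return of Algorithm \ref{alg:Prune}, and every cube in $\cB_{\text{cur}}$ was produced from a $flag=2$ return. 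The invariant trivially holds at initialization, since $\cS_{\text{cur}} = \cS$ and the other lists are empty. Upon termination $\cS_{\text{cur}} = \emptyset$, and statement $(i)$ follows.

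To verify the inductive step, I would check each branch of the algorithm separately using Theorem \ref{prop:Prune}. In Step 4 ($flag=1$), Theorem \ref{prop:Prune}$(i)$ guarantees $X$ contains no nontrivial zeros of $F$, so deleting it preserves the invariant. In Steps 5, 6, and 7, Theorem \ref{prop:Prune}$(iv)$ guarantees every zero of $F$ in $X$ lies in $X'$, so moving $X'$ to $\cB_{\text{cur}}$, $\cA_{\text{cur}}$, or $\cR_{\text{cur}}$ respectively preserves the invariant (and simultaneously preserves the auxiliary tagging property on $\cA_{\text{cur}}$ and $\cB_{\text{cur}}$, since the $flag$ values match). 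In Step 8, $X$ is replaced by $X' = X \cap K'(X,\bar{x}) \subseteq X$ that still contains every zero of $F$ in $X$, again by Theorem \ref{prop:Prune}$(iv)$, and $X$ is re-sent to Step 3 rather than removed. In Step 9, $X' = X'_1 \cup X'_2$ by construction, so $\bigcup \cS_{\text{cur}}$ is unchanged (modulo the harmless swap of $X$ for $X' \subseteq X$). Hence the invariant is preserved after every iteration.

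For parts $(ii)$ and $(iii)$, I would use the auxiliary tagging part of the invariant. Any $X \in \cA$ at termination was added in Step 6 with $Prune$-output $flag=3$; applying Theorem \ref{prop:Prune}$(iii)$ directly yields that for each $\alpha \in \pi_\alpha(X)$ there exists a unique $(\hat{\omega}_\alpha,\hat{c}_\alpha)$ with $F(\alpha,\hat{\omega}_\alpha,\hat{c}_\alpha)=0$ in $X$, which is statement $(ii)$. Analogously, any $X \in \cB$ was added in Step 5 with $flag=2$, so by Theorem \ref{prop:Prune}$(ii)$ any zero of $F$ in $X$ lies on the principal branch, which is statement $(iii)$.

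The routine part is the case-by-case verification above; nothing here goes beyond bookkeeping and an application of Theorem \ref{prop:Prune} at each step. The only mildly subtle point, which I would state explicitly, is that the algorithm need not terminate in general, so the theorem should be read as a conditional statement on the output when Algorithm \ref{alg:BranchAndPrune} does return. Since all tests producing $flag \neq 0$ are triggered eventually once cubes are small enough (and the $\epsilon$-criterion in Step 7 forces any remaining cube into $\cR$), termination itself is not part of the statement to be proved — only the three set-theoretic conclusions, all of which follow from the invariant and Theorem \ref{prop:Prune}.
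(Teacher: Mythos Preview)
Your proposal is correct and takes essentially the same approach as the paper: both arguments track the invariant that every zero of $F$ initially in $\bigcup\cS$ remains in $\bigcup\cS_{\text{cur}}\cup\cA_{\text{cur}}\cup\cB_{\text{cur}}\cup\cR_{\text{cur}}$ through each step of Algorithm~\ref{alg:BranchAndPrune}, verifying this case by case via Theorem~\ref{prop:Prune}, and then read off $(ii)$ and $(iii)$ from the $flag$ value that caused each cube to be placed in $\cA$ or $\cB$. Your write-up is slightly more formal in naming the loop invariant explicitly and in noting that termination is not part of the claim, but the logical content is the same.
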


\begin{proof}
	We prove the claims of the theorem. 
	\begin{enumerate}[(i)]
		\item  
		Suppose there is some solution $ \hat{x} \in X$ for some $X \in \cS$. 
		We show that $\hat{x} \in \bigcup \cS \cup \cA \cup \cB \cup \cR$ at every step of the algorithm. 		
		If we replace $X$ by $X'$ as in Step 3, then $\hat{x} \in X'$ by Theorem \ref{prop:Prune}. 
		In Step 4, if $ flag =1$ then in fact $ X' = \emptyset$, so $X$ could not have contained any solutions in the first place.  
		In Steps 5, 6 and 7, the cube $ X' $ is added to one of $ \cA$, $\cB$ or $\cR$. 
		Hence, as $\hat{x} \in X'$ then $ \hat{x} \in \bigcup \cS\cup  \cA \cup \cB \cup \cR$. 
		If in Step 8 we decide to prune the cube $X'$ again, then we may repeat the argument made for Steps 3-7. 
		In Step 9 we divide $X'$ into two new cubes $X_1'$ and $X_2'$ for which $ X' = X_1' \cup X_2'$. 
		Hence $\hat{x} $ will be contained in at least one of $ X_1'$ or $X_2'$, and both cubes are added to $ \cS$, so we cannot lose the solution in Step 9.

		Thus we have shown that  $\hat{x} \in \bigcup \cS \cup \cA \cup \cB \cup \cR$ at every step.		
		Since the algorithm can only stop when $\cS = \emptyset$, it follows that every solution $\hat{x}$ initially contained in $ \bigcup  \cS$ will eventually be contained in $ \bigcup \cA \cup \cB \cup \cR$. 
		
		\item  The only way a cube $X'$ can be added to $\cA$ is in Step 6. 
		That is, for some cube $ X \in \cS$ the output of Algorithm \ref{alg:Prune} returned $ \{3,X'\}$. 
		Thus, it  follows from Theorem \ref{prop:Prune} that for all $ \alpha  \in \pi_\alpha(X)$  there is a unique $ \hat{x}  = ( \alpha , \hat{\omega}_\alpha, \hat{c}_\alpha)\in X$ such that $ F(\hat{x}) =0$. 
		\item   The only way a cube $X'$ can be added to $\cB$ is in Step 5. 
		That is, for some cube $ X \in \cS$ the output of Algorithm \ref{alg:Prune} returned $ \{2,X'\}$. 
		Thus, it  follows from Lemma \ref{prop:BifNbd}  that the only solutions to $F=0$ in $X'$ are those on the principal branch. 
	\end{enumerate}
\end{proof}

If a cube has no zeros inside of it yet there is a solution close to its boundary, 
then proving that the cube does not contain any solutions can be very difficult, resulting in an excessive number of subdivisions. 
This phenomenon is common to branch and bound algorithms  and is referred to as the cluster effect \cite{schichl2004exclusion}.  
As we wish to enumerate not just isolated solutions but a 1-parameter family of solutions, the difficulty of the cluster effect is multiplied. 
Furthermore, we cannot expect that the boundary of a cube will almost never contain a solution. 
In particular, when we subdivide a cube we may also bisect the curve of solutions, and further subdivisions will not remedy this problem (see Figure \ref{fig:BranchANDBound}). 
As such,  we should not expect that $ \cR \neq \emptyset$.

To address this issue we apply Algorithm \ref{alg:Recombine} to the output of Algorithm \ref{alg:BranchAndPrune}.
In Step 1 we recombine cubes in $\cR$ which overlap in the $\alpha$ dimension. 
In Step 2 we split the cubes in $\cR$ along the $\alpha$-dimension to make them easier to prune, which we do in Step 3. 
Ideally by Step 4 all of the cubes have been removed from $ \cR$, having been added to either $ \cA$ or $\cB$.  

Even if $\cR = \emptyset$ at this point, it is not immediately clear  that the only solutions are on the principal branch. 
For two distinct cubes $ X_1,X_2 \in \cA$, if there is some $ \alpha_0$ such that $ \alpha_0 \in \pi_\alpha(X_1)$ and  $ \alpha_0 \in \pi_\alpha(X_2)$, then there could very well be two distinct solutions at the parameter $ \alpha_0$.    
In fact, since we subdivide along the $ \alpha$--dimension it is to be expected that a cube will share an $\alpha$--value with one or two other cubes. 
In Steps 6-9 of Algorithm \ref{alg:Recombine}  we   check to make sure that when two cubes have $ \alpha$--values in common, then there is a unique solution associated to each $\alpha_0 \in \pi_\alpha(X_1) \cap \pi_\alpha(X_2)$.

\begin{figure}[h]
	\centering
	\includegraphics[width = .475\textwidth]{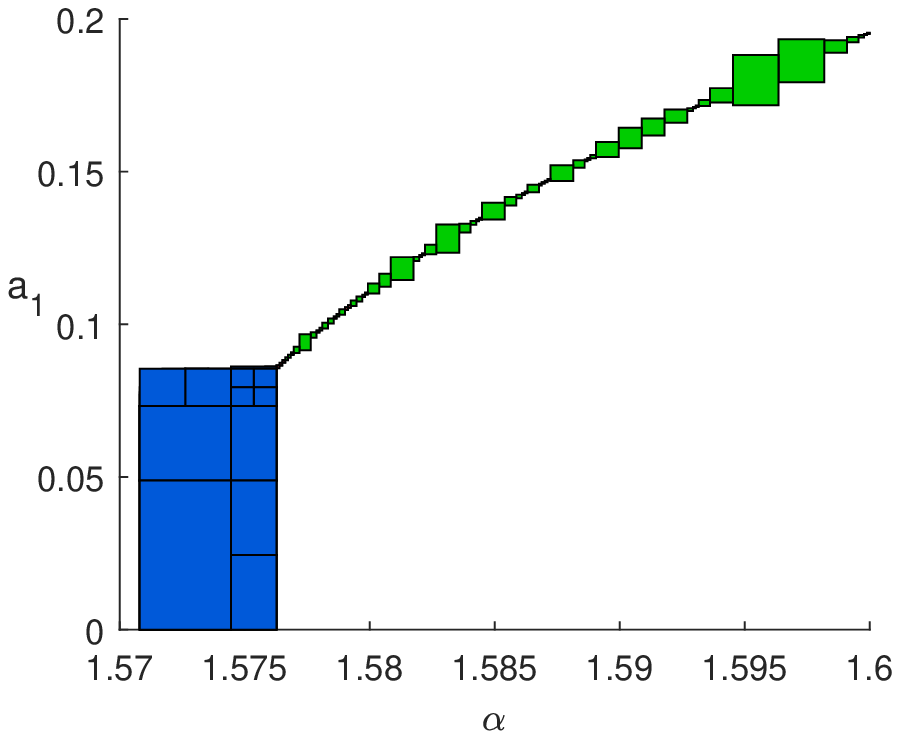}
	\includegraphics[width = .475\textwidth]{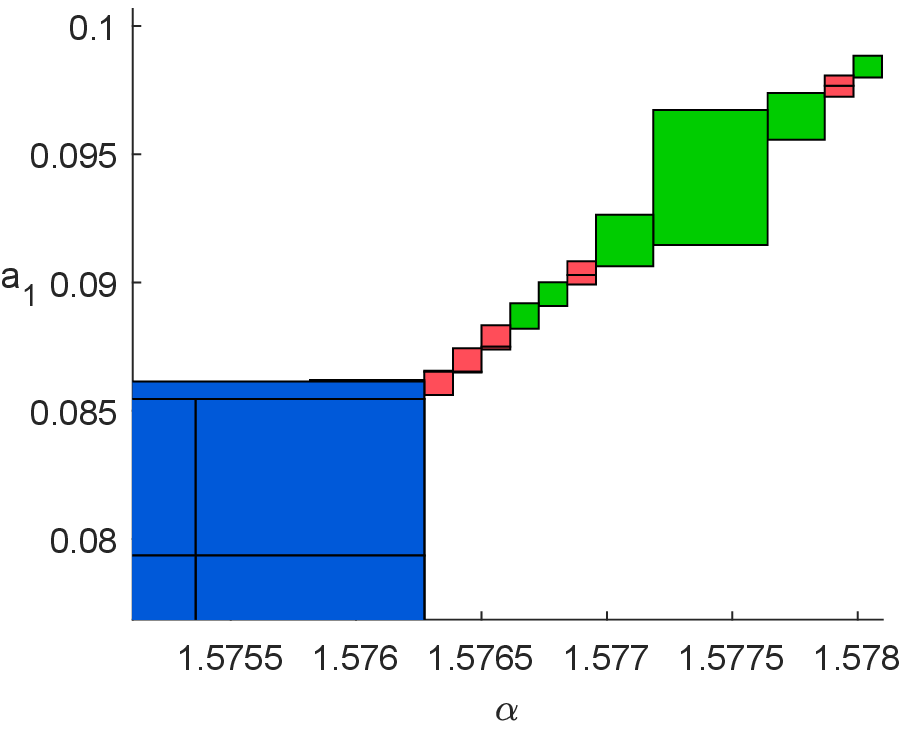}
	\caption{ \footnotesize
		An example output of Algorithm \ref{alg:BranchAndPrune}. The cubes in $\cA$ are in green, the cubes in $\cB$ are in blue, and the cubes in $ \cR$ are in red. }
	\label{fig:BranchANDBound}
\end{figure}

\begin{algorithm} 
	\label{alg:Recombine}
	Take as input   sets $ \cA, \cB, \cR$ produced by Algorithm \ref{alg:BranchAndPrune}  and a  computational parameter $n \in \N$.  
	The output is a pair of  intervals $ I_\alpha^\cA$, $I_\alpha^\cB$ and either success or failure. 
	\begin{enumerate}
		%% 1 %% 
		\item Combine the elements in $\cR$ whose $\alpha$-components overlap in more than just a point. 
		That is, for all $ X,Y \in \cR$, if $diam( \pi_\alpha(X) \cap \pi_\alpha(Y)) >0$, then replace $ X$ and $Y$ in the set $ \cR$ with a new cube $Z $ containing $ X \cup Y$. 
		%% 2 %% 
		\item Subdivide each $ X \in \cR$ along the $ \alpha$-dimension. 
		%% 3 %% 
		\item  
		For all $ X \in \cR$ calculate $ \{flag, X'\} = Prune^{(n)}(X)$, the output of Algorithm \ref{alg:Prune} iterated at most $n$ times with initial input $X$. 
		If $flag =1$, then remove $ X$ from $ \cR$. 
		If $flag =2$, then remove $ X$ from $ \cR$ and add $ X' $ to $\cB$. 
		If $flag =3$, then remove $ X$ from $ \cR$ and add $ X' $ to $\cA$.  
		%% 4 %%
		\item 
		If $ \cR \neq \emptyset$ then return FAILURE. 
		%% 5 %%
		\item 
		Define $I_\alpha^\cA =  \bigcup_{X \in \cA } \pi_\alpha(X)$ and $I_\alpha^\cB =  \bigcup_{X \in  \cB} \pi_\alpha(X)$.  
		%% 6 %% 
		\item 
		Construct a cover $ \cI_{\cB}'$ of the parts of cubes in $\cA$ which intersect with $ \bigcup \cB$. 
		That is, define $ \cI_{\cB} = \{ X \in \cA : \pi_\alpha (X) \cap I_\alpha^\cB \}$.  Then define $ \cI_{\cB}'$ by, for each $ X \in \cI_{\cB}$,  taking the $\alpha$-component of $X$ and setting it equal to $ \pi_\alpha(X) \cap I_{\alpha}^\cB$ and adding the modified cube to $\cI_{\cB}'$. 
		%% 7 %%
		\item 
		For all $ X \in \cI_\cB'$ calculate $\{flag,X'\} = Prune^{(n)}(X)$, the output of Algorithm \ref{alg:Prune} iterated $n$--times with initial input $X$. 
		If $flag \neq 2 $ then return FAILURE.   
		%% 8 %% 
		\item 
		Construct a cover $ \cI_{\cA}'$ of the parts of cubes in $\cA$ which intersect with another cube in $\cA$.  
		That is, define $ \cI_{\cA} = \{ (X,Y) \in \cA \times \cA: X \neq Y,  \pi_\alpha (X) \cap  \pi_\alpha (Y) \neq \emptyset  \}$.  Then define $ \cI_{\cA}'$ by, for each $ (X,Y) \in \cI_{\cA}$,  defining a new cube $ Z$ which contains $ X \cup Y$, replacing the $\alpha$-component of $Z$ by $ \pi_\alpha(X) \cap \pi_\alpha(Y)$, and adding $Z$ to $\cI_{\cA}'$.

		%% 9 %% 
		\item 
		For all $ Z \in \cI_\cA'$ calculate $\{flag,Z'\} = Prune^{(n)}(Z)$, the output of Algorithm \ref{alg:Prune} iterated $n$--times with initial input $Z$. 
		If $flag \neq 3 $ then return FAILURE. 
		
		%% 10 %% 
		\item 
		If the algorithm reaches this point, return SUCCESS.				
	\end{enumerate}
\end{algorithm}

\begin{theorem}
	Let $ \cA,\cB,\cR$ denote the output of Algorithm \ref{alg:BranchAndPrune} run with input $ \cS= \{ X_i \subseteq \R ^2 \times \tilde{\oo}^s  \}$ where  $M \geq 5$ and $ s>2$. 
	Suppose having received input $ \cA,\cB,\cR$  and $ n \in \N$, Algorithm \ref{alg:Recombine} returns SUCCESS and intervals  $I_{\alpha}^\cA$ and $I_{\alpha}^\cB$. 
\begin{enumerate}[(i)]
\item If $ \alpha \in I_\alpha^\cA \backslash I_\alpha^\cB$, then there is a unique solution $\hat{x}_\alpha = (\alpha , \hat{\omega}_\alpha,\hat{c}_\alpha) \in \bigcup \cS$ such that $ F_\alpha (\hat{\omega}_\alpha,\hat{c}_\alpha)=0$.  
\item  If $ \alpha \in I_\alpha^\cB$, then the only solutions to $F_\alpha = 0$ in $\bigcup \cS$ are on the principal branch. 

\end{enumerate}

	\label{prop:Rescale}
\end{theorem}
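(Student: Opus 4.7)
The plan is to combine the conclusions of Theorem \ref{prop:BnB} with the additional guarantees that come from Algorithm \ref{alg:Recombine} returning SUCCESS. A natural first step is to establish that at the moment SUCCESS is returned, every zero of $F$ in $\bigcup \cS$ lies in $\bigcup(\cA \cup \cB)$. By Theorem \ref{prop:BnB}$(i)$, this holds initially with $\cR$ allowed. Steps 1--3 of Algorithm \ref{alg:Recombine} only move cubes among $\cR$, $\cA$, and $\cB$ (or discard cubes, but only when $Prune$ returns $flag=1$), and by Theorem \ref{prop:Prune}$(iv)$ a discard never loses a zero. Passing Step 4 forces $\cR = \emptyset$, so $\bigcup(\cA \cup \cB)$ still contains every zero in $\bigcup \cS$.

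For part $(ii)$, I would fix $\alpha \in I_\alpha^\cB$ and consider any zero $\hat{x}_\alpha \in \bigcup \cS$ at that parameter. This zero lies in some $X \in \cA \cup \cB$ with $\alpha \in \pi_\alpha(X)$. If $X \in \cB$, Theorem \ref{prop:BnB}$(iii)$ places $\hat{x}_\alpha$ on the principal branch. If instead $X \in \cA$ and $\pi_\alpha(X) \cap I_\alpha^\cB \neq \emptyset$, then $X$ contributes a cube to $\cI_\cB'$ in Step 6, and Step 7 has already verified that $Prune$ on this restricted cube returns $flag=2$. Theorem \ref{prop:Prune}$(ii)$ then forces $\hat{x}_\alpha$ onto the principal branch as well.

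For part $(i)$, fix $\alpha \in I_\alpha^\cA \setminus I_\alpha^\cB$. Since $\alpha \in I_\alpha^\cA$ there is at least one $X \in \cA$ with $\alpha \in \pi_\alpha(X)$, and Theorem \ref{prop:BnB}$(ii)$ gives a unique zero $\hat{x}_\alpha \in X$ at this parameter. Since $\alpha \notin I_\alpha^\cB$, no cube in $\cB$ contributes a zero at $\alpha$, so every zero at $\alpha$ is the $X$-local unique zero of some $X \in \cA$. For the global uniqueness across $\cA$, suppose two distinct cubes $X_1, X_2 \in \cA$ both satisfy $\alpha \in \pi_\alpha(X_i)$. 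Then $(X_1, X_2) \in \cI_\cA$, and the cube $Z \in \cI_\cA'$ obtained in Step 8 satisfies $X_1 \cup X_2 \subseteq Z$ with $\pi_\alpha(Z) = \pi_\alpha(X_1) \cap \pi_\alpha(X_2)$. Step 9 has verified that $Prune$ on $Z$ returns $flag=3$, so by Theorem \ref{prop:Prune}$(iii)$ there is a unique zero of $F_\alpha$ in $Z$, which must be both of the $X_i$-local zeros. Applying this pairwise to every cube in $\cA$ containing $\alpha$ collapses all candidate zeros to a single point.

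The main subtlety is bookkeeping rather than new analysis: one must confirm that the removal step $flag=1$ in Step 3 of Algorithm \ref{alg:Recombine} never discards a zero (which is immediate from Theorem \ref{prop:Prune}$(i)$) and that every pair of cubes in $\cA$ that could produce duplicate zeros at a common $\alpha$ is genuinely exercised by Step 8's construction of $\cI_\cA$ (which it is, by definition). Transitivity across three or more overlapping cubes then requires no extra work, since each pairwise overlap already pins down a single zero shared by both participating cubes.
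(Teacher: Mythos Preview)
Your proposal is correct and follows essentially the same approach as the paper's proof: first establish via Theorem~\ref{prop:BnB} and Theorem~\ref{prop:Prune}$(iv)$ that after Step~4 all zeros lie in $\bigcup(\cA\cup\cB)$, then handle claim~$(ii)$ via Step~7 and Theorem~\ref{prop:Prune}$(ii)$, and claim~$(i)$ via Step~9 and Theorem~\ref{prop:Prune}$(iii)$. One minor imprecision: you write ``$X_1\cup X_2\subseteq Z$ with $\pi_\alpha(Z)=\pi_\alpha(X_1)\cap\pi_\alpha(X_2)$,'' which is self-contradictory as stated---what you need (and clearly intend) is that the two candidate zeros at the specific parameter $\alpha\in\pi_\alpha(X_1)\cap\pi_\alpha(X_2)$ both lie in $Z$, which they do.
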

\begin{proof}
	We describe the first 4 steps of the algorithm and then prove the theorem.  

	\begin{enumerate}
		%% 1 %% 
		\item Let $\cR$ denote the initial input to the algorithm and $ \cR'$ denote the resulting set produced by Step 1. 
		By its construction, it follows that  $ \bigcup \cR \subseteq  \bigcup \cR'$. 
		%% 2 %% 
		\item If we subdivide the cubes in $\cR'$, then it is still true that $ \bigcup \cR \subseteq \bigcup \cR'$. 
		%% 3 %% 
		\item  
		As described in the proof of Theorem \ref{prop:BnB}, if $flag = 1,2,3$ then it is appropriate to respectively, discard $X$, add $X'$ to $\cB$ and add $X'$ to $\cA$. 
		Appropriate, that is, in the sense that the conclusion of Theorem \ref{alg:BranchAndPrune} will hold for these modified sets $ \cA$, $\cB$ and $ \cR$. 
		%% 4 %%
		\item  If we cannot show that every region of phase-space lies in either $\cA$ or $\cB$ then we are unable to prove the theorem. 
		Otherwise, every solution to $ F=0$ in $\bigcup \cS$ is contained in $\bigcup \cA \cup \cB$. 
			\end{enumerate}

		We prove claim $(i)$. 
		If $ \alpha \in I_\alpha^\cA \backslash I_\alpha^\cB$   there is a solution $\hat{x}_\alpha$ to $ F_\alpha =0$ in $ \bigcup \cA $. 
		Suppose there exists a second distinct solution $ \hat{x}_\alpha'$ to $F_\alpha =0$. 
		Since each cube  $X \in \cA$ contains a unique solution for all $\alpha \in \pi_\alpha (X)$, there would exist distinct cubes $X,Y \in \cA$ such that $ \hat{x}_\alpha \in X$ and  $ \hat{x}_\alpha' \in Y$.  
		It follows then that there exists some $Z \in \cI_\alpha'$ such that $ \hat{x}_\alpha,\hat{x}_\alpha'\in Z$.  
		Since it is		
		determined by Step 9 that $ flag=3$ in the output of $Prune^{(n)}(Z)$, therefore by Theorem \ref{prop:Prune} there exists a unique solution  to $F=0$ in $Z$.  
		Thereby $\hat{x}_\alpha = \hat{x}_\alpha'$, and if $ \alpha \in I_\alpha^\cA \backslash I_\alpha^\cB$, then there is a unique solution $\hat{x}_\alpha = (\alpha , \hat{\omega}_\alpha,\hat{c}_\alpha) \in \bigcup \cS$ such that $ F_\alpha (\hat{\omega}_\alpha,\hat{c}_\alpha)=0$.

		We prove claim $(ii)$. 
		Suppose there exists some $\hat{x}_\alpha$ such that  $ \alpha \in I_\alpha^\cB$ and  $ F_\alpha (\hat{\omega},\hat{c})=0$. 
		Since the algorithm passed through Step 4, it follows that  $\hat{x}_\alpha \in \bigcup \cA \cup \cB$. 
		If $ \hat{x}_\alpha \in \bigcup \cB$, then $\hat{x}_\alpha$  is on the principal branch by Theorem  \ref{prop:BnB}.
		If $ \hat{x}_\alpha \in \bigcup \cA$, then there exists a cube $X \in \cI_\cB'$ such that $ \hat{x}_\alpha \in X$.  
		If the Algorithm \ref{alg:Recombine} is successful, then when Algorithm  \ref{alg:Prune}  is run $n$--times with initial input $X$ it will produce $flag =2$. 
		Hence by Theorem \ref{prop:Prune} this solution $\hat{x}_\alpha \in \bigcup \cA$ must be on the principal branch.  

\end{proof}

\begin{proof}[Proof of Theorem \ref{prop:Jones}] 
	We implemented the algorithms discussed in this paper using MATLAB version R2017b (see \cite{JonesCode} for the code). 
	The calculations were performed on Intel Xeon E5-2670 and Intel Xeon E5-2680 processors, and used INTLAB for the interval arithmetic \cite{rump1999intlab}. 
	A summary of the algorithms' runtime is  given in Table  \ref{table:RunTimes}.

	For the intervals $I_\alpha$ taking the values (containing at least)  $ [\pp,1.6]$, $ [1.6,1.7]$, $ [1.7,1.8]$, and $ [1.8,1.9]$,
	we ran \cite[ Algorithm 5.1]{jlm2016Floquet} using computational parameters $i_0=2$, $j_0=20$, $n_{Time} = 32$, $N_{Period} =10$, $N_{Prune} =4$, $\epsilon_1 = 0.05$ and $\epsilon_2 = 0.05$. 
	We then ran Algorithm \ref{alg:Comprehensive} using computational parameters   $M=10$ and $S=3$, and $N=15$ producing outputs $\cS_{I_\alpha}$ (see Figure \ref{fig:InitialPrep}). 
	By Theorem \ref{prop:Comprehensive}, if $y$ is a SOPS at parameter $ \alpha \in I_\alpha$ given as in \eqref{eq:FourierEquation}, then $ (\alpha,\omega,c) \in \bigcup \cS_{I_\alpha}$. 
	By Proposition \ref{prop:Equivalence}  	the SOPS to \eqref{eq:Wright} at parameters $ \alpha \in I_\alpha$ are in bijective correspondence with the nontrivial zeros of $F$ inside $ \bigcup \cS_{I_\alpha}$. 
	
	On each of the collections of cubes $\cS_{I_\alpha}$ we ran Algorithm \ref{alg:BranchAndPrune}, using the following computational parameters: 
	For the stopping criterion we used $ \epsilon = 0.0001$ for  $\alpha \in  [\pp,1.6]$ and   $ \epsilon = 0.01$ otherwise. 
	For the continue-pruning criterion, in every case  we used $\delta = 0.5$.
	For the maximal subdivision dimension, in each case we used $d =6$, corresponding to the  variables  $ \alpha,\omega,a_1 \in \R $ and $ c_2 , c_3 \in \C$.
	For the set of weights, in each case we used  $ \lambda_{0} = 8$ (corresponding to $\alpha$) and $ \lambda_i = 1$ otherwise. 
	
	The output of Algorithm \ref{alg:BranchAndPrune} are sets $\cA_{I_\alpha},\cB_{I_\alpha},\cR_{I_\alpha}$. 
	On each of these resulting outputs we ran Algorithm \ref{alg:Recombine} using $n=5$, and in each case it was successful, producing sets $ I_\alpha^\cA$ and $I_\alpha^\cB$. 
	When $I_\alpha =  [\pp,1.6]$ then $I_\alpha^\cB =[\pp,\pp + 0.00550]$ and $I_\alpha^\cA  = [\pp + 0.00550,1.6]$, and otherwise $I_\alpha^\cA  = I_\alpha$.  
	By Theorem \ref{prop:BnB}, this shows that for all $ \alpha \in [\pp + 0.00550,1.9]$ there exists a unique solution to $ F_\alpha =0$ in $ \bigcup \cS$, and if $ \alpha \in [\pp , \pp + 0.00550]$ then the only solutions that exist are on the principal branch. 
	Note that by \cite{BergJaquette} there are no solutions at $ \alpha = \pp$ on or off the principal branch, and there are no folds in the principal branch for $ \alpha \in ( \pp, \pp+0.00553]$.  
	 Hence for all $ \alpha \in ( \pp , 1.9]$ there exists a unique solution to \eqref{eq:Wright}.  	 	
	By \cite{jlm2016Floquet} and \cite{xie1991thesis} there exists a unique SOPS to \eqref{eq:Wright}  for $ \alpha \in [1.9,6.0]$ and $ \alpha \geq 5.67$ respectively. 
	Hence there exists a unique SOPS to \eqref{eq:Wright} for all $ \alpha > \pp$. 
\end{proof}

\begin{center}

\begin{tabular}{ r r r r r r r r }
	\hline
$I_\alpha$  & $N_{bf}$ 	&$N_{grid}'$	&$N_{grid}$	& $T_{bf}$	& $T_{grid}$ & $T_{bb}^*$ 	& $T_{verify}$ \\
\hline 
$[\pp,1.6]$ & 1604 		&614			&181		& 602.5		& 5.2  		&	$2.7^{*}$	& 1.4 \\
$[1.6,1.7]$ & 985 		&861			&165 		& 461.6		& 5.6		&	$4.6^*$		& 1.2 \\
$[1.7,1.8]$ & 604 		&566			&143 		& 335.1		& 3.6		&	$10.1^*$	& 0.4 \\
 $[1.8,1.9]$& 292 		&277			& 97 		& 135.6		& 1.9		& 	$67.0^*$	& 0.6 \\
\hline\end{tabular}
	\captionof{table}{\footnotesize Computational benchmarks from the computer-assisted proof of Theorem \ref{prop:Jones}. 
		$N_{bf}$ --	the number of bounding functions output  by \cite[ Algorithm 5.1]{jlm2016Floquet}.
		$N_{grid}'$ -- the number of cubes in $\mathcal{S}'''$ after Step 7 in Algorithm \ref{alg:Comprehensive}. 
		$N_{grid}$ -- the number of cubes output by Algorithm \ref{alg:Comprehensive}.
		$T_{bf}$ -- the run time (min.) of \cite[ Algorithm 5.1]{jlm2016Floquet}.
		$T_{grid}$ -- the run time (min.) of Algorithm \ref{alg:Comprehensive}.
		$T_{bb}^*$ 	-- the run time (min.) of Algorithm \ref{alg:BranchAndPrune} parallelized using 20 workers. 
		\textbf{$T_{verify}$} -- the run time (min.) of Algorithm \ref{alg:Recombine}.
	}
\label{table:RunTimes}
\end{center}

\begin{proof}[Proof of Theorem \ref{prop:Rescaling}]
	By \cite{mallet1988morse} every global solution to \eqref{eq:MNF} has a positive, integer valued lap number $V(x,t)$. 
	For non-zero $x$ the lap number will be an odd integer,  defined by fixing the smallest possible $ \sigma \geq t$ such that $ x(\sigma)=0$ and defining: 
	\begin{equation*}
	V(x,t) = 
	\begin{cases}
	\mbox{the \# of zeros (counting multiplicity) of $x(s)$ in $(\sigma-1,\sigma]$; or } \\
	\mbox{$1$ if no $\sigma $ exists}.
	\end{cases}
	\end{equation*}

	Let us fix $x_0$ as a periodic solution to \eqref{eq:MNF} with period $L_0$. 
	For any $t\in \R $ the lap number $ V(x_0,t)$ remains constant, and we can define $ N:= V(x_0,t)$. 
	If $ N=1$ then $x_0$ must be a SOPS. 	
	If $ N \geq 3$ then define the integer $n:= \tfrac{N-1}{2}$ and  $r := 1 - n L_0 $. 
	By \cite{mallet1988morse}, it follows that $2/N < L_0 < 2/(N-1)$, hence $ 0<r<N^{-1}$. 
	Defining $x_1(t):= x_0(r t)$ and  $ \alpha_1 = r \alpha_0 $ we calculate the derivative of $ x_1(t)$ as: $	x_1'(t)= -  \alpha_1 f( x_0(rt -1))$.
	We may further compute: 
	\begin{align*}
	x_0(rt-1) = x_0(rt -1 + n L_0) = x_0(r(t-1)) = x_1(t-1).
	\end{align*}
	Hence it follows that $ x_1'(t) = - \alpha_1 f( x_1(t-1))$. 
	Thus we have shown that if $V(x_0) \geq 3$  then $x_0$ is a rescaling of a periodic solution $x_1$ with period length $L_1 = L_0/r >2$.  
	Hence $x_0$ is a rescaling of a SOPS. 
	
\end{proof}

\section{Future Work}
\label{sec:FutureWork}

One pertinent question that remains concerns the period length of SOPS to Wright's equation. 
%\note{Isolas of ROPS}
\begin{conjecture}
	\label{prop:LengthConj}
	The period length of SOPS to \eqref{eq:Wright} increases monotonically in $\alpha$.
\end{conjecture}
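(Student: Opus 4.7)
By Theorem~\ref{prop:Jones}, for each $\alpha > \pp$ there is a unique SOPS, so its frequency $\omega(\alpha)$ and phase-normalized Fourier coefficients $c(\alpha)$ are well-defined functions of $\alpha$. Since $F$ is real analytic and $D_{(\omega,c)}F$ must be an isomorphism on the slice $\R \times \tilde{\oo}^s$ (otherwise local uniqueness along the principal branch would fail), the map $\alpha \mapsto (\omega(\alpha),c(\alpha))$ is real analytic on $(\pp,\infty)$. Since $L(\alpha) = 2\pi/\omega(\alpha)$, Conjecture~\ref{prop:LengthConj} is equivalent to the strict inequality $\omega'(\alpha) < 0$ for all $\alpha > \pp$. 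The plan is to partition $(\pp,\infty)$ into three regimes and treat each separately.

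First, on a right-neighborhood $(\pp,\pp+\delta]$ of the Hopf bifurcation, I would apply the normal-form analysis of \cite{chow1977integral}. The principal branch admits an expansion $\omega(\alpha) = \pp + b_1(\alpha-\pp) + O((\alpha-\pp)^2)$, and the coefficient $b_1$ is an explicit integral over the leading eigenmode of the linearization; checking $b_1 < 0$ is a finite calculation. Combined with a rigorous remainder bound on the higher-order normal-form terms (which can be obtained either from standard Hopf-bifurcation estimates or from a direct Krawczyk argument of the type developed in Section~\ref{sec:KrawczykBanach}), this yields strict monotonicity on an explicit interval $(\pp,\pp+\delta]$.

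Second, on a compact intermediate range $[\pp+\delta,\alpha_\ast]$ I would extend the computer-assisted framework of this paper. Implicit differentiation of $F(\alpha,\omega(\alpha),c(\alpha))=0$ yields
\begin{equation*}
\omega'(\alpha) \;=\; -\,\pi_\omega \Bigl[\, D_{(\omega,c)}F(\alpha,\omega(\alpha),c(\alpha))^{-1}\, \partial_\alpha F(\alpha,\omega(\alpha),c(\alpha))\,\Bigr].
\end{equation*}
For each cube $X \in \cA$ produced by Algorithm~\ref{alg:BranchAndPrune}, the approximate inverse $A^\dagger(\bar x,M)$ of Definition~\ref{def:Adagger} together with the tail estimates of Lemma~\ref{prop:CentralLemma} provide all the ingredients needed to produce a rigorous enclosure $\omega'(\alpha) \in [\underline{v}(X),\overline{v}(X)]$ valid \emph{uniformly} for every $\alpha \in \pi_\alpha(X)$ and every $(\omega,c)\in X$, by a Krawczyk-style computation strictly analogous to Definition~\ref{def:KrawczykApprox}. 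One then verifies $\overline{v}(X) < 0$ on each cube, subdividing and re-pruning any cube where the sign test fails, exactly as in Algorithms~\ref{alg:BranchAndPrune}--\ref{alg:Recombine}. This upgrades the existing proof of uniqueness to a proof of monotonicity on $[\pp+\delta,\alpha_\ast]$.

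Third, on the tail $[\alpha_\ast,\infty)$ I would invoke Nussbaum's asymptotic estimates for large $\alpha$ \cite{nussbaum1982asymptotic}, which give a leading-order description of the SOPS and its period as $\alpha \to \infty$. One must extract sufficiently explicit control on the error terms to conclude both that the leading-order formula is itself eventually monotonic and that the remainder is too small to change the sign of $\omega'(\alpha)$. The main obstacle is precisely this third step: Nussbaum's expansion is singular and the error terms are not given in explicit quantitative form, so bridging the asymptotic regime with a computer-assisted range requires either a revisitation of his proof with tracked constants or a different effective large-$\alpha$ estimate. By contrast, regime (1) is a finite normal-form computation, and regime (2) is a direct extension of the infrastructure already built in this paper.
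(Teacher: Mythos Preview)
This statement is labelled a \emph{Conjecture} in the paper and is \emph{not} proved there; the paper explicitly says it ``is unresolved for $\alpha > \pp + 6.830\times 10^{-3}$'' and only records partial evidence (the result from \cite{BergJaquette} near the bifurcation, the rigorous numerics of \cite{lessard2010recent,jlm2016Floquet} for $\alpha\le 6$, and Nussbaum's asymptotic bound $|L-\alpha^{-1}e^\alpha|<7.66\,\alpha^{-1}$ for $\alpha\ge 3.8$). So there is no paper proof to compare against, and what you have written is a research programme rather than a proof.

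As a programme it is reasonable in outline, and you are candid about the main obstruction in regime~(3). A few points deserve attention. First, your opening inference ``$D_{(\omega,c)}F$ must be an isomorphism \dots\ otherwise local uniqueness would fail'' is not valid as stated: uniqueness of a zero does not by itself force nondegeneracy of the linearization. You would need to invoke the asymptotic-stability/Floquet arguments of \cite{xie1993uniqueness,jlm2016Floquet} (or the explicit Krawczyk contraction established cube-by-cube in this paper) to justify invertibility and hence smooth dependence of $(\omega,c)$ on $\alpha$. Second, your regime~(1) is essentially already covered by \cite{BergJaquette}, as the paper notes. Third, regime~(2) is a plausible extension of the infrastructure here, but be aware that rigorously enclosing $\omega'(\alpha)$ requires a validated enclosure of $D_{(\omega,c)}F^{-1}$, not merely of the approximate inverse $A^\dagger$; one must bound $\|I-A^\dagger D_{(\omega,c)}F\|<1$ uniformly on each cube and propagate that through a Neumann series, which is more delicate than the existence/uniqueness argument in Theorem~\ref{prop:KrawczykOuterApprox}. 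Finally, for regime~(3) the stated Nussbaum bound controls $L$ but not $L'$, so even eventual monotonicity does not follow without reopening the proof in \cite{nussbaum1982asymptotic} to track derivatives of the error; you correctly flag this as the genuine gap, and until it is closed the argument remains a proposal, not a proof.
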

The rigorous numerics in \cite{lessard2010recent,jlm2016Floquet} strongly suggests this to be true when $ \alpha \leq 6$, and when $ \alpha \geq 3.8$  the period length $L$   satisfies $|L - \alpha^{-1} e^\alpha| < 7.66 \alpha^{-1}$ by \cite{nussbaum1982asymptotic}.
It is known that the period length increases monotonically when $ \alpha \in ( \pp, \pp+6.830 \times 10^{-3}]$ by   \cite{BergJaquette}. 
However Conjecture \ref{prop:LengthConj} is unresolved for $ \alpha > \pp + 6.830 \times 10^{-3}$. 

Another question, proposed in \cite{neumaier2014global},  is the generalized Wright's conjecture.

\begin{conjecture}
	\label{prop:GenWright}
	For every $ \alpha >0$ the set $\overline{U(\alpha)}$,  the closure of the forward extension by the semiflow of a local unstable manifold at zero,  is the global attractor for \eqref{eq:Wright}. 
\end{conjecture}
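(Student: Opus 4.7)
The plan is to approach Conjecture \ref{prop:GenWright} by combining the now-complete uniqueness of slowly oscillating periodic solutions (Theorem \ref{prop:Jones}) with the Morse decomposition machinery for monotone cyclic feedback systems developed by Mallet-Paret and Sell, and with the rescaling structure of Theorem \ref{prop:Rescaling}. The natural division is along the Hopf bifurcations at $\alpha = \pp + 2n\pi$, since each crossing adds a pair of unstable eigenvalues at zero and introduces a new family of rapidly oscillating periodic orbits into the attractor.

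For $\alpha \leq \pp$ the zero equilibrium is globally attracting by standard Lyapunov arguments for retarded functional differential equations, and $U(\alpha)$ degenerates to $\{0\}$, so the conjecture is immediate. For $\alpha \in (\pp, \pp + 2\pi)$ the zero equilibrium has a two-dimensional local unstable manifold, and the only periodic orbit available is a unique SOPS by Theorem \ref{prop:Jones}. I would show that the forward extension of $W^u(0)$ fills a topological disk whose boundary is precisely this SOPS: the discrete Lyapunov functional counting sign changes (which here realizes the lap number) together with a Poincar\'e--Bendixson-type theorem for negative feedback systems constrains every $\omega$-limit set to be either $\{0\}$, the SOPS, or a heteroclinic connection between them, placing each in $\overline{U(\alpha)}$.

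For $\alpha > \pp + 2\pi$ the attractor admits a Morse decomposition stratified by lap number, and the plan is to show inductively that each Morse component lies in the forward extension of $W^u(0)$. Here Theorem \ref{prop:Rescaling} is the crucial structural ingredient: every rapidly oscillating periodic orbit is a time-rescaling of a SOPS at a larger parameter, so uniqueness from Theorem \ref{prop:Jones} passes down to a uniqueness-up-to-rescaling statement at each level of the Morse decomposition. A connection-matrix argument, combined with the continuation of branches from the Hopf bifurcations (and potentially supplemented by a rigorous-numerics continuation of the unstable manifold in the spirit of the present paper), would then attempt to show that the global forward extension of $W^u(0)$ meets every Morse component.

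The main obstacle is this third regime. Unlike the SOPS, intermediate rapidly oscillating periodic orbits are typically hyperbolic saddles in the full semiflow rather than attractors, so Xie-style stability arguments \cite{xie1993uniqueness} cannot be recycled to push orbits onto $\overline{U(\alpha)}$; one must explicitly control the heteroclinic connections between Morse components of different lap number. Even granting the Morse structure, the conjecture also requires excluding exotic invariant sets (quasiperiodic tori or aperiodic motions) that could in principle sit inside the attractor but off the closure of $W^u(0)$. Ruling these out appears to demand either substantial new analytic input beyond what the Fourier-Krawczyk framework of this paper provides, or a dedicated computer-assisted proof bounding the semiflow away from the unstable manifold in its complement.
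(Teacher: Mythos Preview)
This statement is a \emph{conjecture} that the paper explicitly leaves open (``unresolved for $\alpha > \pp$''), so there is no proof in the paper to compare against. What the paper does offer, in the paragraph following the conjecture, is a reduction, and your plan is broadly aligned with it: Morse decomposition by lap number, a Poincar\'e--Bendixson argument for monotone feedback, and the rescaling of Theorem~\ref{prop:Rescaling}. Two points of your proposal diverge from the paper's discussion, and one of them is a genuine gap.

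The gap is your claim that ``uniqueness from Theorem~\ref{prop:Jones} passes down to a uniqueness-up-to-rescaling statement at each level of the Morse decomposition.'' It does not. In Theorem~\ref{prop:Rescaling} the rescaling factor $r = 1 - \tfrac{N-1}{2}L_0$ depends on the period $L_0$ of the orbit, so two distinct periodic orbits at the same parameter $\alpha_0$ with the same lap number $N$ but different periods rescale to SOPS at \emph{different} parameters $\alpha_1 \neq \alpha_1'$. Uniqueness of the SOPS at each fixed parameter therefore says nothing about multiplicity of rapidly oscillating orbits at $\alpha_0$. The paper identifies exactly this obstruction: Theorem~\ref{prop:Rescaling} rules out isolas but not folds in the rapidly oscillating branches, and the paper instead reduces Conjecture~\ref{prop:GenWright} to Conjecture~\ref{prop:LengthConj} (monotonicity of the SOPS period in $\alpha$), which is what would actually force the map $\alpha_0 \mapsto \alpha_1$ to be injective and rule out folds.

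Conversely, two of the obstacles you list are not obstacles at all, and the paper points to the relevant literature. The Poincar\'e--Bendixson theorem for monotone cyclic feedback systems \cite{mallet1996poincare} already forces every $\omega$-limit set to be $0$ or a periodic orbit, so there are no quasiperiodic tori or aperiodic invariant sets to exclude. And by \cite{fiedler1989connections} there is always a connecting orbit from the unstable manifold of the origin to each nonempty Morse set $S_N$, so the heteroclinic connections you propose to construct are known to exist. The entire remaining content of the conjecture is uniqueness of the periodic orbit within each $S_N$, which is precisely where your argument breaks and where the paper defers to Conjecture~\ref{prop:LengthConj}.
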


This is known to be true for $\alpha \leq \pp $ by \cite{wright1955non,neumaier2014global,BergJaquette} and is unresolved for $ \alpha > \pp$. 
Conjecture \ref{prop:GenWright} can be reduced to a question about the number of rapidly oscillating periodic solutions, and moreover Conjecture  \ref{prop:LengthConj} implies Conjecture \ref{prop:GenWright}.  
To wit, by the Poincar\'e-Bendixson theorem for monotone feedback systems \cite{mallet1996poincare}, the $\omega$-limit set of any initial data to \eqref{eq:Wright} is either $ 0$ or a periodic orbit.   
The lap number organizes the attractor into Morse sets $S_N$ by \cite{mallet1988morse}, and by \cite{fiedler1989connections} there is always a connecting orbit from the unstable manifold of the origin to the Morse set $ S_N$. 
Hence, to prove Conjecture \ref{prop:GenWright},  it would suffice to show that each Morse set consists of exactly one periodic orbit.  

By Theorem \ref{prop:Rescaling} there are no isolas of periodic orbits, so multiple rapidly  oscillating periodic solutions can only arise if there is a fold in one of the branches of rapidly oscillating periodic solutions. 
If Conjecture \ref{prop:LengthConj} holds, then such a fold can be ruled out using rescaling equation in Theorem  \ref{prop:Rescaling}. 
In particular, if there are two SOPS at parameters $ \alpha_1 < \alpha_2$ with period lengths $L_1, L_2$ and the equality  $ \alpha_0 = \alpha_1 ( 1 + n L_1) = \alpha_2 (1 + n L_2)$ holds, then there will be two distinct  rapidly oscillating periodic solutions at parameter  $\alpha_0$. 
This equality cannot hold if $L_1 < L_2$ whenever $ \alpha_1 <\alpha_2$. 
Thereby Conjecture \ref{prop:LengthConj} implies Conjecture \ref{prop:GenWright}.

There are still further questions about Wright's equation. 
In \cite{mccord1996global} the authors show a semi-conjugacy of Wright's equation, and negative feedback systems more generally, onto a family of finite dimensional ODEs. 
Outside the dynamics described by this semi-conjugacy, are there any other interesting dynamics in \eqref{eq:Wright}?
Furthermore, do the stable and unstable manifolds of the periodic orbits in \eqref{eq:Wright} intersect transversely? 
\newline

There are many future directions for the  rigorous numerics of infinite dimensional dynamical systems.  
Perhaps one of the most striking features of Figure \ref{fig:Verified} and Figure \ref{fig:BranchANDBound} is the non-uniform size of cubes. 
This seems to be a result of  applying the branch and bound method to a 1-parameter family of solutions instead of a collection of  isolated solutions.  
One approach would be to first validate a neighborhood around the branch of solutions (\emph{\'a la} \cite{lessard2010recent}) and then use a branch and bound method to ensure that there are no solutions outside of this neighborhood. 
In this paper, we used a collection of weights $\{ \lambda\}_{i=0}^d$ to mitigate this problem.  
When using all equal weights ($\lambda_i=1$ for all $i$), the vast majority of cubes output by Algorithm \ref{alg:BranchAndPrune} ended up in $\cR$.  
Having a better heuristic for deciding along which dimension to branch would be very useful,  
particularly so if it does away with the \emph{a priori} need to select a maximal subdivision dimension $d$ as a computational parameter.

Integral to the success of our algorithm (allowing it to finish in finite time) are the estimates derived in \cite{jlm2016Floquet} which bound \emph{all} of the slowly oscillating periodic solutions to Wright's equation.  
Since most initial conditions are attracted to the single SOPS in Wright's equation, it was sufficient for the methods in \cite{jlm2016Floquet} to be relatively simple.  
Future work could be done toward bounding all periodic orbits when there are multiple (unstable) solutions, or when the dimension is higher, as well as bounding all periodic solutions to ODEs and PDEs.

Another question,  explored in \cite{lessard2017computer}, is ``what the best Banach space to work in?''  
In this paper we consider the space $ \oo^s$ of Fourier coefficients with algebraic decay. 
In Algorithm \ref{alg:Comprehensive}, the estimates for obtaining \emph{a priori} estimates on the Fourier coefficients of SOPS always improve in absolute terms by using larger value of $S$. 
However, the value of $C_0$ will increase when using a larger $S$. 
It would likely be beneficial to initially run Algorithm \ref{alg:Comprehensive} with a large $S$, and then convert these bounds into a smaller $S$ so that $ C_0$ will shrink as well. 
However, for other applications and other infinite dimensional problems, the question of what is the optimal Banach space remains.

\section*{Acknowledgments}
The author thanks Konstantin Mischaikow for many insightful discussions, as well as John Mallet-Paret and Roger Nussbaum for discussions on future work.  
\newline

\noindent The author acknowledges the Office of Advanced Research Computing (OARC) at Rutgers, The State University of New Jersey for providing access to the Amarel cluster and associated research computing resources that have contributed to the results reported here.

%%%%%%%%%%%%%%%%%%%%%%%%%%%%%%%%%%%%%%%%%%%%%%%%%%%%%%%%%%%%%%%%%%%%%%%%%%%%

\bibliographystyle{abbrv}
\bibliography{BibWright}

\begin{thebibliography}{10}

\bibitem{ambrosetti1995primer}
A.~Ambrosetti and G.~Prodi.
\newblock {\em A primer of nonlinear analysis}.
\newblock Number~34. Cambridge University Press, 1995.

\bibitem{neumaier2014global}
B.~B{\'a}nhelyi, T.~Csendes, T.~Krisztin, and A.~Neumaier.
\newblock Global attractivity of the zero solution for {W}right's equation.
\newblock {\em SIAM Journal on Applied Dynamical Systems}, 13(1):537--563,
  2014.

\bibitem{chow1977integral}
S.-N. Chow and J.~Mallet-Paret.
\newblock Integral averaging and bifurcation.
\newblock {\em Journal of Differential Equations}, 26(1):112--159, 1977.

\bibitem{csendes1997subdivision}
T.~Csendes and D.~Ratz.
\newblock Subdivision direction selection in interval methods for global
  optimization.
\newblock {\em SIAM Journal on Numerical Analysis}, 34(3):922--938, 1997.

\bibitem{day2013rigorous}
S.~Day and W.~D. Kalies.
\newblock Rigorous computation of the global dynamics of integrodifference
  equations with smooth nonlinearities.
\newblock {\em SIAM Journal on Numerical Analysis}, 51(6):2957--2983, 2013.

\bibitem{fiedler1989connections}
B.~Fiedler and J.~Mallet-Paret.
\newblock Connections between {M}orse sets for delay differential equations.
\newblock {\em J. reine angew. Math}, 397:23--41, 1989.

\bibitem{galias2007infinite}
Z.~Galias and P.~Zgliczy{\'n}ski.
\newblock Infinite dimensional {K}rawczyk operator for finding periodic orbits
  of discrete dynamical systems.
\newblock {\em International Journal of Bifurcation and Chaos},
  17(12):4261--4272, 2007.

\bibitem{hansen2003global}
E.~Hansen and G.~W. Walster.
\newblock {\em Global optimization using interval analysis: revised and
  expanded}, volume 264.
\newblock CRC Press, 2003.

\bibitem{JonesCode}
J.~Jaquette.
\newblock MATLAB code available at:
  \url{http://www.math.rutgers.edu/~jaquette/Jones_Code.zip} \,.

\bibitem{jlm2016Floquet}
J.~Jaquette, J.-P. Lessard, and K.~Mischaikow.
\newblock Stability and uniqueness of slowly oscillating periodic solutions to
  {W}right's equation.
\newblock {\em Journal of Differential Equations}, 263(11):7263--7286, 2017.

\bibitem{jones1962existence}
G.~S. Jones.
\newblock The existence of periodic solutions of $f′(x)=- \alpha f (x- 1) \{
  1+ f (x) \}$.
\newblock {\em Journal of Mathematical Analysis and Applications},
  5(3):435--450, 1962.

\bibitem{lessard2010recent}
J.-P. Lessard.
\newblock Recent advances about the uniqueness of the slowly oscillating
  periodic solutions of {W}right's equation.
\newblock {\em Journal of Differential Equations}, 248(5):992--1016, 2010.

\bibitem{lessard2017computer}
J.-P. Lessard and J.~D. Mireles~James.
\newblock Computer assisted fourier analysis in sequence spaces of varying
  regularity.
\newblock {\em SIAM Journal on Mathematical Analysis}, 49(1):530--561, 2017.

\bibitem{mallet1988morse}
J.~Mallet-Paret.
\newblock Morse decompositions for delay-differential equations.
\newblock {\em Journal of differential equations}, 72(2):270--315, 1988.

\bibitem{mallet1996poincare}
J.~Mallet-Paret and G.~R. Sell.
\newblock The {P}oincar{\'e}--{B}endixson theorem for monotone cyclic feedback
  systems with delay.
\newblock {\em Journal of differential equations}, 125(2):441--489, 1996.

\bibitem{mccord1996global}
C.~McCord and K.~Mischaikow.
\newblock On the global dynamics of attractors for scalar delay equations.
\newblock {\em Journal of the American Mathematical Society}, 9(4):1095--1133,
  1996.

\bibitem{moore1977test}
R.~E. Moore.
\newblock A test for existence of solutions to nonlinear systems.
\newblock {\em SIAM Journal on Numerical Analysis}, 14(4):611--615, 1977.

\bibitem{moore2009introduction}
R.~E. Moore, R.~B. Kearfott, and M.~J. Cloud.
\newblock {\em Introduction to interval analysis}.
\newblock SIAM, 2009.

\bibitem{neumaier1990interval}
A.~Neumaier.
\newblock {\em Interval methods for systems of equations}, volume~37.
\newblock Cambridge university press, 1990.

\bibitem{nussbaum1982asymptotic}
R.~Nussbaum.
\newblock Asymptotic analysis of some functional-differential equations.
\newblock In C.~L. Bednarek, A.~R., editor, {\em Dynamical systems, II}, pages
  277--301, 1982.

\bibitem{nussbaum-analytic}
R.~D. Nussbaum.
\newblock Periodic solutions of analytic functional differential equations are
  analytic.
\newblock {\em Michigan Math. J.}, 20:249--255, 1973.

\bibitem{nussbaum1975global}
R.~D. Nussbaum.
\newblock A global bifurcation theorem with applications to functional
  differential equations.
\newblock {\em Journal of Functional Analysis}, 19(4):319--338, 1975.

\bibitem{regala1989periodic}
B.~T. Regala.
\newblock {\em Periodic solutions and stable manifolds of generic delay
  differential equations}.
\newblock PhD thesis, Brown University, 1989.

\bibitem{rump1999intlab}
S.~M. Rump.
\newblock {INTLAB}--{INT}erval {LAB}oratory.
\newblock In {\em Developments in reliable computing}, pages 77--104. Springer,
  1999.

\bibitem{schichl2004exclusion}
H.~Schichl and A.~Neumaier.
\newblock Exclusion regions for systems of equations.
\newblock {\em SIAM journal on numerical analysis}, 42(1):383--408, 2004.

\bibitem{BergJaquette}
J.~B. van~den Berg and J.~Jaquette.
\newblock A proof of {W}right's conjecture.
\newblock {\em arXiv preprint arXiv:1704.00029}, 2017.

\bibitem{berg2008chaotic}
J.~B. Van Den~Berg and J.-P. Lessard.
\newblock Chaotic braided solutions via rigorous numerics: Chaos in the
  {S}wift--{H}ohenberg equation.
\newblock {\em SIAM Journal on Applied Dynamical Systems}, 7(3):988--1031,
  2008.

\bibitem{wright1955non}
E.~M. Wright.
\newblock A non-linear difference-differential equation.
\newblock {\em J. reine angew. Math}, 194(1-4):66--87, 1955.

\bibitem{xie1991thesis}
X.~Xie.
\newblock {\em Uniqueness and stability of slowly oscillating periodic
  solutions of differential delay equations}.
\newblock PhD thesis, Rutgers University, 1991.

\bibitem{xie1993uniqueness}
X.~Xie.
\newblock Uniqueness and stability of slowly oscillating periodic solutions of
  delay equations with unbounded nonlinearity.
\newblock {\em Journal of differential equations}, 103(2):350--374, 1993.

\end{thebibliography}

\end{document}